\newtheorem{theorem}{{\bf Theorem}}[section]
\newtheorem{definition}[theorem]{{\bf Definition}}
\newtheorem{example}[theorem]{{\bf Example}}
\newtheorem{lemma}[theorem]{{\bf Lemma}}
\newtheorem{proposition}[theorem]{{\bf Proposition}}
\newtheorem{remark}[theorem]{{\bf Remark}}
\newcommand{\lk}[2]{{\rm lk}_{#1}(#2)}
\newcommand{\st}[2]{{\rm st}_{#1}(#2)}
\newcommand{\skel}[2]{{\rm skel}_{#1}(#2)}
\newcommand{\Kd}[1]{{\mathcal K}(#1)}
\newcommand{\bs}{\backslash}
\newcommand{\ol}{\overline}
\newcommand{\meets}{\leftrightarrow}
\newcommand{\nmeets}{\nleftrightarrow}
\newcommand{\FF}{ \ensuremath{\mathbb{F}}}
\newcommand{\QQ}{ \ensuremath{\mathbb{Q}}}
\newcommand{\ZZ}{ \ensuremath{\mathbb{Z}}}
\newcommand{\IntA}{A^{\!\!\!^{^{\circ}}}}
\newcommand{\IntB}{B^{\!\!\!^{^{\circ}}}}
\newcommand{\TPSS}{S^{\hspace{.2mm}2}\! \times \hspace{-3.3mm}_{-} \,
S^{\hspace{.1mm}1}}
\renewcommand{\Authfont}{\scshape\small}
\renewcommand{\Affilfont}{\itshape\small}
\renewcommand{\Authand}{,}
\renewcommand{\Authands}{, }
\begin{document}

\author[1] {Basudeb Datta}
\author[2] { Dipendu Maity}

\affil[1,2]{Department of Mathematics, Indian Institute of Science, Bangalore 560\,012, India.
 ${}^1$dattab@math.iisc.ernet.in, ${}^2$dipendumaity16@math.iisc.ernet.in.}

\title{Semi-equivelar maps on the torus are Archimedean}

%\vspace{-15mm}

\date{June 29, 2017}

\maketitle

\vspace{-10mm}

\begin{abstract}

If the face-cycles at all the vertices in a map on a surface are of same type then the map is called semi-equivelar. There are eleven types of Archimedean tilings on the plane. All the Archimedean tilings are semi-equivelar maps.
If a map $X$ on the torus is a quotient of an Archimedean tiling on the plane then the map $X$ is semi-equivelar. We show that each semi-equivelar map on the torus is a quotient of an Archimedean tiling on the plane.

Vertex-transitive maps are semi-equivelar maps. We know that four types of semi-equivelar maps on the torus are always vertex-transitive and there are examples of other seven types of semi-equivelar maps which are not vertex-transitive. We show that the number of  ${\rm Aut}(Y)$-orbits of vertices for any semi-equivelar map $Y$ on the torus is at most six. In fact, the number of orbits is at most three except one type of semi-equivelar maps. Our bounds on the number of orbits are sharp.
\end{abstract}

\noindent {\small {\em MSC 2010\,:} 52C20, 52B70, 51M20, 57M60.

\noindent {\em Keywords:} Polyhedral maps on torus $\cdot$ Vertex-transitive map $\cdot$ Equivelar map $\cdot$ Archimedean tiling}

\section{Introduction}

Here all the maps are polyhedral maps on surfaces. Thus, a face of a map is an $n$-gon for some integer $n\geq 3$ and intersection of two faces of a map is either empty or a vertex or an edge. A map $M$ is said to be {\em vertex-transitive} if the automorphism group ${\rm Aut}(M)$ acts transitively on the set $V(M)$ of vertices of $M$.

For a vertex $u$ in a map $M$, the faces containing $u$ form a cycle (called the {\em face-cycle} at $u$) $C_u$ in the dual graph $\Lambda(M)$ of $M$. Clearly, $C_u$ is of the form $P_1\mbox{-}P_2\mbox{-}\cdots\mbox{-}P_k\mbox{-}F_{1,1}$, where $P_i=F_{i,1}\mbox{-}\cdots \mbox{-}F_{i,n_i}$ is a path consisting of $p_i$-gons $F_{i,1}, \dots, F_{i,n_i}$, $p_i\neq p_{i+1}$ for $1\leq i\leq k-1$ and $p_n\neq p_1$. A map $M$ is called {\em semi-equivelar} if $C_u$ and $C_v$ are of same type for any two vertices $u$ and $v$ of $M$. More precisely, there exist integers $p_1, \dots, p_k\geq 3$ and $n_1, \dots, n_k\geq 1$, $p_i\neq p_{i+1}$ (addition in the suffix is modulo $k$) such that both $C_u$ and $C_v$ are of the form $P_1\mbox{-}P_2\mbox{-}\cdots\mbox{-}P_k\mbox{-}F_{1,1}$ as above, where $P_i=F_{i,1}\mbox{-}\cdots \mbox{-}F_{i,n_i}$ is a path consisting of $n_i$ $p_i$-gons. In this case, we say that $M$ is {\em semi-equivelar of type} $[p_1^{n_1}, \dots, p_k^{n_k}]$. (We identify $[p_1^{n_1}, \dots, p_k^{n_k}]$ with $[p_k^{n_k}, \dots, p_1^{n_1}]$ and with $[p_2^{n_2}, \dots, p_k^{n_k}, p_1^{n_1}]$.)
A semi-equivelar map of type $[p^q]$ is also called an {\em equivelar} map.
Clearly, vertex-transitive maps are semi-equivelar.

%From a map on a surface one can construct infinitely many new maps by introducing new %vertices inside faces. It is natural to ask whether there are infinitely many %semi-equivelar maps exist on a surface.

An {\em Archimedean} tiling of the plane $\mathbb{R}^2$ is a tiling of $\mathbb{R}^2$ by regular polygons such that all the vertices of the tiling are of same type.
An {\em Archimedean} tiling of $\mathbb{R}^2$ is also known as a {\em semi-regular}, or {\em homogeneous}, or {\em uniform} tiling.
In \cite{GS1977}, Gr\"{u}nbaum and Shephard showed that there are exactly eleven types of Archimedean tilings on the plane (see Example \ref{exam:plane}). These types are $[3^6]$, $[4^4]$, $[6^3]$, $[3^4,6^1]$, $[3^3,4^2]$,  $[3^2,4^1,3^1,4^1]$, $[3^1,6^1,3^1,6^1]$, $[3^1,4^1,6^1,4^1]$, $[3^1,12^2]$,  $[4^1,6^1,12^1]$, $[4^1,8^2]$.
Clearly, an Archimedean tiling on $\mathbb{R}^2$ is a semi-equivelar map on $\mathbb{R}^2$. But, there are semi-equivelar maps on $\mathbb{R}^2$ which are not (not isomorphic to) Archimedean tilings. In fact, there exists $[p^q]$ equivelar maps on $\mathbb{R}^2$ whenever $1/p+1/q\leq 1/2$ (e.g., \cite{CM1957}, \cite{FT1965}). It was shown in \cite{DU2005} and \cite{DM2017} that the Archimedean tilings $E_1, E_4, E_5, E_6$ are unique as semi-equivelar maps. Here we prove this for remaining Archimedean tilings. More precisely, we have

\begin{theorem} \label{theo:plane}
Let $E_1, \dots, E_{11}$ be the Archimedean tilings on the plane given in Example $\ref{exam:plane}$. Let $X$ be a semi-equivelar map on the plane. If the type of $X$ is same as the type of $E_i$, for some $i\leq 11$, then $X\cong E_i$. In particular, $X$ is vertex-transitive.
\end{theorem}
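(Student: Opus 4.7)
The plan is to prove the theorem for each of the seven types $E_2, E_3, E_7, E_8, E_9, E_{10}, E_{11}$ not already handled in \cite{DU2005} and \cite{DM2017}, by constructing an explicit simplicial isomorphism $\phi : X \to E_i$. The key observation is that being semi-equivelar of a fixed type $[p_1^{n_1}, \dots, p_k^{n_k}]$ pins down the closed star $\st{X}{v}$ of every vertex $v$ up to combinatorial isomorphism: the face-cycle $C_v$ is a cyclic sequence of $n := n_1 + \cdots + n_k$ faces with prescribed sizes in a prescribed cyclic order, which determines the degree of $v$ and the corner pattern around $v$.

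First I would pick a base vertex $v_0 \in X$ and a vertex $e_0 \in E_i$, together with an identification of the face-cycle $C_{v_0}$ with $C_{e_0}$; this induces an isomorphism $\phi_0 : \st{X}{v_0} \to \st{E_i}{e_0}$. I then extend $\phi$ outward. Inductively, suppose $\phi$ is defined on a connected subcomplex $K \subseteq X$ which is a union of closed stars, and let $u \in K$ be a vertex with $\st{X}{u} \not\subseteq K$. Then some neighbor $v$ of $u$ satisfies $\st{X}{v} \subseteq K$, which means the two faces of $X$ containing the edge $uv$ are already in the image of $\phi$. These two faces are adjacent in the cycle $C_u$, so they determine the identification of $C_u$ with $C_{\phi(u)}$ up to both rotation and reflection. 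Hence the remaining faces of $C_u$ are forced, and $\phi$ extends uniquely to $\st{X}{u}$.

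The main technical point is consistency of the extension: whenever two different propagation paths reach the same vertex, they must assign the same image. Since $X$ is a map on the plane and hence simply connected, it suffices to verify consistency around each face $F$ of $X$, which in turn follows because once $\phi$ is fixed on a single edge of $F$, the cyclic-type conditions at the successive vertices of $F$ force a unique identification of $F$ with $\phi(F)$. I expect the crux of the argument to lie in the type-by-type verification that the cyclic pattern at each vertex, combined with any already-fixed partial data on its face-cycle, really does leave no residual combinatorial freedom; the patterns containing faces of distinct sizes in asymmetric positions, notably $[3^3, 4^2]$ and $[4^1, 6^1, 12^1]$, will require the most care to rule out any ``flip'' or ``shift'' ambiguity in propagation. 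Once $\phi$ is established as an isomorphism for each of the seven remaining types, vertex-transitivity of $X$ follows from that of the Archimedean tiling $E_i$.
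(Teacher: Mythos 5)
Your propagation scheme rests on the claim that once the two faces containing an edge $uv$ are known, the rest of the face-cycle $C_u$ is forced. This is true whenever every adjacent pair of face-sizes occurs in at most one position of the cyclic symbol (as for $[4^1,8^2]$, $[3^1,12^2]$, $[3^1,4^1,6^1,4^1]$ and $[4^1,6^1,12^1]$), but it fails for $[3^4,6^1]$: if the two known faces at $u$ are both triangles, the pair $\{3,3\}$ sits in three distinct positions of the cyclic word $(3,3,3,3,6)$, so the location of the unique hexagon at $u$ is not determined. Since each vertex lies on exactly one hexagon and the hexagons are vertex-disjoint, no choice of propagation order escapes this: the placement of the hexagons is genuinely global data. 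This is exactly why the paper treats $E_7$ by a completely different route --- coning off the hexagons to reduce to $E_1$ and then classifying, in Lemma \ref{lemma:U0}, the subsets $U\subseteq V(E_1)$ with the perfect-domination property \eqref{eq:first}, a proof that needs the connectivity Claims 3--5 about an auxiliary $6$-regular graph and cannot be replaced by a star-by-star extension. Your list of ``hard cases'' also points at the wrong targets: $[3^3,4^2]$ is $E_4$, already settled in the cited prior work and not among your seven, while $[4^1,6^1,12^1]$ is the \emph{easiest} case for local forcing because its three face-sizes are pairwise distinct; the one type that actually breaks your argument, $[3^4,6^1]$, is not flagged.

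For the remaining rigid types your developing-map idea is a legitimate alternative to the paper's proofs of Lemmas \ref{lemma:E8}--\ref{lemma:E11}, which instead build the map row by row and must prove that the constructed paths $P_j, Q_j$ are infinite; that step is carried by the Euler-characteristic counting Lemmas \ref{lemma:disc3464-1}--\ref{lemma:disc3464-2}, \ref{lemma:disc3122}, \ref{lemma:disc4612-1}--\ref{lemma:disc4612-2} and \ref{lemma:disc482-1}--\ref{lemma:disc482-2} applied to the disk a closed-up path would bound. In your framing that global danger is absorbed into the consistency check, so you would owe the reader two things you currently only gesture at: a verification that the monodromy around each single face is trivial for each type, and the argument that a star-preserving simplicial map onto the simply connected $E_i$ is a covering and hence an isomorphism. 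Neither is difficult, but together with the unresolved $[3^4,6^1]$ case the proposal as written does not yet prove the theorem.
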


There are infinitely many semi-equivelar maps on both the torus and the Klein bottle (e.g., \cite{DN2001}, \cite{DU2005}). But, there are only eleven types of semi-equivelar maps on the torus and ten types of semi-equivelar maps on the Klein bottle (\cite{DM2017}). All the known examples are quotients of Archimedean tilings of the plane \cite{Ba1991, Su2011t, Su2011kb}. This motivates us to define

\begin{definition} \label{def:archimedean}
{\rm A semi-equivelar map on the torus or on the Klein bottle is said to be an} Archimedean map {\rm if it is the quotient of an Archimedean tiling on the plane by a discrete subgroup of the automorphism group of the tiling.
}
\end{definition}

As a consequence of Theorem \ref{theo:plane} we prove

\begin{theorem} \label{theo:sem=archimedian}
All %the
semi-equivelar maps on the torus and the Klein bottle are Archimedean.
\end{theorem}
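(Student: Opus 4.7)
The plan is to pass to the universal cover of the torus (respectively, Klein bottle) and then invoke Theorem \ref{theo:plane} to pin down the lift as an actual Archimedean tiling. Let $Y$ be a semi-equivelar map of type $[p_1^{n_1},\dots,p_k^{n_k}]$ on the torus $T$; the Klein bottle case is analogous. The universal cover $\pi\colon \mathbb{R}^2 \to T$ has deck transformation group $\Gamma \cong \pi_1(T)$ acting freely and properly discontinuously on $\mathbb{R}^2$. The CW decomposition of $T$ given by $Y$ lifts to a CW decomposition $\widetilde{Y}$ of $\mathbb{R}^2$ whose cells are the $\pi$-preimages of the cells of $Y$, and $\Gamma$ acts on $\widetilde{Y}$ by combinatorial automorphisms with $Y = \widetilde{Y}/\Gamma$.

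The first step is to check that $\widetilde{Y}$ is itself a semi-equivelar map on $\mathbb{R}^2$ of the same type as $Y$. Because $\pi$ is a local homeomorphism, for any vertex $\tilde u$ of $\widetilde{Y}$ the star of $\tilde u$ in $\widetilde{Y}$ maps bijectively onto the star of $\pi(\tilde u)$ in $Y$, preserving all face-incidence data; hence the face-cycle at $\tilde u$ has the same combinatorial type as the one at $\pi(\tilde u)$. One must also verify the polyhedral intersection condition for $\widetilde{Y}$: any two distinct lifted faces meet in at most a vertex or an edge, which follows from the corresponding property downstairs together with the fact that each face of $Y$, being embedded, lifts homeomorphically to each of its sheets. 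Combined with the classification of \cite{DM2017}, which says that the type of any semi-equivelar map on $T$ or on the Klein bottle is one of the eleven Archimedean types listed in Example \ref{exam:plane}, Theorem \ref{theo:plane} now produces an isomorphism $\varphi\colon \widetilde{Y} \to E_i$ of maps on the plane for some $i\le 11$.

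To conclude, transport the $\Gamma$-action along $\varphi$ to obtain a subgroup $\varphi\Gamma\varphi^{-1} \le \mathrm{Aut}(E_i)$. This subgroup is discrete since $\Gamma$ is a deck transformation group acting properly discontinuously, and $Y \cong E_i/\varphi\Gamma\varphi^{-1}$ is therefore an Archimedean map in the sense of Definition \ref{def:archimedean}. The only substantive ingredient is Theorem \ref{theo:plane} itself; the chief obstacle in the execution is the careful verification that the lifted CW structure on $\mathbb{R}^2$ is genuinely a polyhedral map, since a priori one might worry about pathological overlaps of lifted faces, but the polyhedrality of $Y$ and the local-homeomorphism property of $\pi$ together exclude this. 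The Klein bottle case is handled identically, using that its universal cover is also $\mathbb{R}^2$ and that its fundamental group embeds as a discrete group of isometries (hence of map-automorphisms of the lifted Archimedean tiling).
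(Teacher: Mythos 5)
Your proposal is correct and follows essentially the same route as the paper: lift $Y$ to its universal cover $\mathbb{R}^2$, use the classification of types from \cite{DM2017} together with Theorem \ref{theo:plane} to identify the lift with an Archimedean tiling $E_i$, and then realize the deck transformation group as a discrete subgroup of ${\rm Aut}(E_i)$ so that $Y$ is the corresponding quotient. Your additional care in verifying that the lifted cell structure is a genuine polyhedral map of the same type is a point the paper passes over by citing Spanier, but the substance of the argument is identical.
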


We know from \cite{DM2017} and \cite{DU2005}

\begin{proposition} \label{prop:vtmaps}
Let $X$ be a semi-equivelar map on the torus. If the type of $X$ is $[3^6]$, $[6^3]$,     $[4^4]$ or $[3^3,4^2]$ then $X$ is vertex-transitive.
\end{proposition}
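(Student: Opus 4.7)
The plan is to invoke Theorem~\ref{theo:sem=archimedian}, which identifies $X$ with a quotient $E_i/\Gamma$, where $E_i$ is the Archimedean tiling of the prescribed type and $\Gamma \le {\rm Aut}(E_i)$ is a discrete subgroup acting freely on $\mathbb{R}^2$. Since $X$ is a torus, $\Gamma$ cannot contain any rotation or reflection, and hence must be a rank-$2$ lattice of translations. Every element of the normalizer $N(\Gamma)$ of $\Gamma$ in ${\rm Aut}(E_i)$ descends to an automorphism of $X = E_i/\Gamma$, so it is enough to exhibit, for each of the four listed types, a subgroup $H \le N(\Gamma)$ that already acts transitively on $V(E_i)$.

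For types $[3^6]$ and $[4^4]$ this is immediate: the vertices of the triangular (respectively square) lattice form a single orbit under the full translation subgroup $T \le {\rm Aut}(E_i)$, and since $T \le N(\Gamma)$, we may take $H = T$.

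For types $[6^3]$ and $[3^3,4^2]$, $T$ has exactly two orbits on $V(E_i)$: the two color classes of the honeycomb in the case of $E_3$, and the ``tops'' versus ``bottoms'' of the square rows in the case of $E_5$. The idea is then to enlarge $T$ by a single half-turn $\rho \in {\rm Aut}(E_i)$ that swaps the two orbits --- the rotation through $180^\circ$ about the midpoint of any edge of $E_3$, or about the midpoint of the vertical edge shared by two adjacent squares of $E_5$. Since every half-turn of $\mathbb{R}^2$ conjugates a translation $t_v$ to $t_{-v}$, it stabilizes every translation lattice; hence $\rho \in N(\Gamma)$ for any such $\Gamma$, and $H = \langle T, \rho\rangle$ acts transitively on $V(E_i)$.

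The main obstacle, granting Theorem~\ref{theo:sem=archimedian}, lies in the $[3^3,4^2]$ case, where the elongated triangular tiling is the least symmetric of the four and one must verify by direct geometric inspection both that there are exactly two translation orbits on vertices and that the proposed half-turn really exchanges them. For the remaining seven Archimedean types no subgroup of ${\rm Aut}(E_i)$ consisting entirely of elements that normalize every translation lattice acts transitively on the vertices, which is precisely what permits the non-vertex-transitive quotients of those types mentioned in the introduction.
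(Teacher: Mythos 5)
Your argument is correct. Note, however, that the paper does not prove Proposition \ref{prop:vtmaps} at all --- it is quoted from \cite{DM2017} and \cite{DU2005} --- so the fair comparison is with the paper's proof of Theorem \ref{theo:no-of-orbits}, which uses exactly your strategy: write $X=E_i/\Gamma$ with $\Gamma\leq H_i$ a lattice of translations, enlarge $H_i$ by the half-turn $x\mapsto -x$ about a suitably chosen origin, check that $\Gamma$ is normalized (the paper's Claims 1 and 2 are your observation that a half-turn conjugates $t_v$ to $t_{-v}$), and count orbits of the enlarged group; your proof is precisely the case where that count is one. The only blemish is notational: in the paper's labelling the elongated triangular tiling of type $[3^3,4^2]$ is $E_4$, not $E_5$ (which is the snub square tiling).
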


\begin{proposition} \label{prop:non-vtmaps}
If $[p_1^{n_1}, \dots, p_k^{n_k}] = [3^2,4^1,3^1,4^1]$, $[3^4,6^1]$, $[3^1,6^1,3^1,6^1]$, $[3^1,4^1,6^1,4^1]$, \linebreak  $[3^1,12^2]$, $[4^1,8^2]$  or $[4^1,6^1,12^1]$ then there exists a semi-equivelar map of type $[p_1^{n_1}, \dots, p_k^{n_k}]$ on the torus which is not vertex-transitive.
\end{proposition}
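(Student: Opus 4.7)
My plan is to construct, for each of the seven types in the statement, an explicit semi-equivelar map on the torus of that type that fails to be vertex-transitive. By Theorem \ref{theo:sem=archimedian}, any such map arises as a quotient $E_i/\Lambda$ of the Archimedean tiling $E_i$ by a discrete subgroup $\Lambda$ consisting of translations, so it suffices to exhibit a suitable lattice $\Lambda_i \subseteq T_i$ for each of the seven types, where $T_i$ denotes the full translation subgroup of ${\rm Aut}(E_i)$.

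The idea is as follows. By Theorem \ref{theo:plane} and standard covering-space theory, every combinatorial automorphism of $E_i/\Lambda_i$ lifts to an element of ${\rm Aut}(E_i)$, and descends only when it normalizes $\Lambda_i$; consequently ${\rm Aut}(E_i/\Lambda_i)\cong N_{{\rm Aut}(E_i)}(\Lambda_i)/\Lambda_i$. Translations always normalize $\Lambda_i$, and $180^{\circ}$ rotations preserve every rank-$2$ lattice (since the linear part $v\mapsto -v$ fixes every lattice setwise), so they also always descend. Rotations of order $\ge 3$ and reflections, by contrast, need not: by picking the two basis vectors of $\Lambda_i$ with sufficiently generic lengths and an angle unrelated to the rotational symmetry order of $E_i$, one can ensure that no order-$\ge 3$ rotation and no reflection in ${\rm Aut}(E_i)$ normalizes $\Lambda_i$. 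With such a choice, ${\rm Aut}(E_i/\Lambda_i)$ is generated modulo $\Lambda_i$ by $T_i$ together with at most the descended $180^{\circ}$ rotations, and its action on vertices factors through translations and central inversions.

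It then remains to check, for each of the seven types, that under this restricted group action the vertex set of $E_i$ still splits into more than one orbit. For the types $[3^1,12^2]$, $[4^1,6^1,12^1]$, $[4^1,8^2]$ the merging of $T_i$-orbits of vertices in the original tiling requires a rotation of order $\ge 3$, which has been killed, so the full $T_i$-orbit count (which exceeds one) is preserved. For the remaining types $[3^2,4^1,3^1,4^1]$, $[3^4,6^1]$, $[3^1,6^1,3^1,6^1]$, $[3^1,4^1,6^1,4^1]$, one checks by inspecting the positions of the $180^{\circ}$-rotation centres relative to a chosen fundamental parallelogram for $\Lambda_i$ that at least two $T_i$-orbits survive the descended inversions. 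This per-type verification --- identifying the $T_i$-orbit structure on vertices and the fusions it undergoes under the descended involutions --- is the main technical obstacle, but in each case it reduces to a finite combinatorial check inside a fixed fundamental parallelogram, essentially by drawing the tiling and labelling the vertices.
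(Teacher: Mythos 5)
Your argument is sound in outline but takes a genuinely different route from the paper's. The paper does not actually prove Proposition~\ref{prop:non-vtmaps} here: it imports it from \cite{DM2017} and \cite{DU2005}, and the witnesses it works with are completely explicit finite quotients --- the maps $M_1,\dots,M_5$ of Example~\ref{exam:torus} --- whose failure of vertex-transitivity is certified in the proof of Theorem~\ref{theo:orbits-3&6} by a combinatorial invariant: an auxiliary $2$-regular graph $\mathcal{G}$ built from face-diagonals and ``nice'' edges, in which vertices of different classes lie on cycles of different lengths, so the inclusion ${\rm Aut}(M)\leq {\rm Aut}(\mathcal{G})$ already separates orbits. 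You instead argue at the level of the deck group: identify ${\rm Aut}(E_i/\Lambda)$ with $N_{{\rm Aut}(E_i)}(\Lambda)/\Lambda$, choose $\Lambda$ so that only translations and half-turns survive in the normalizer, and count orbits of $\langle T_i,\,x\mapsto -x\rangle$. This is legitimate (the lifting step rests on Theorem~\ref{theo:plane} and on the fact that combinatorial automorphisms of the Archimedean tilings are realized by isometries, both of which the paper also uses), and it is more uniform: the final orbit count needs no case analysis at all, since each of the seven tilings has at least three vertices per primitive translation cell and adjoining the half-turns is an index-two extension of $T_i$, which can at most halve the number of translation orbits, leaving at least two. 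What your approach buys is one mechanism for all seven types; what it costs is that the proposition is an existence statement, and you have traded it for another existence statement --- a sublattice of $T_i$ preserved by no rotation of order $\geq 3$ and no reflection --- that you assert but do not exhibit. This is true and routine, but it does require care: the obvious ``rectangular'' choice $\langle ma, nb\rangle$ with $m\neq n$ can still be preserved by a reflection whose axis contains a basis vector, so one genuinely needs a skew sublattice and a check of finitely many divisibility conditions per tiling. The paper's explicit examples sidestep all of this, at the price of being ad hoc.

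One factual slip: your claim that for $[3^1,12^2]$, $[4^1,6^1,12^1]$ and $[4^1,8^2]$ the descended half-turns merge no translation orbits is false. In the truncated square tiling the vertices form four $H_{11}$-orbits which the half-turns fuse into the two orbits $\{u_{i,j}\}$ and $\{v_{i,j}\}$ (this is stated explicitly in the proof of Theorem~\ref{theo:no-of-orbits}), and similarly six fuse to three for $[3^1,12^2]$ and twelve to six for $[4^1,6^1,12^1]$. The error is harmless, since the fused counts are still at least two, but the justification you give for those three cases is not the correct one; the uniform ``an index-two extension at most halves the orbit count'' bound is what actually closes the argument.
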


\begin{proposition} \label{prop:no-of-orbits}
Let $X$ be a semi-equivelar map on the torus. Let the vertices of $X$ form $m$ ${\rm Aut}(X)$-orbits. {\rm (a)} If the type of $X$ is  $[3^2,4^1,3^1,4^1]$ then $m\leq 2$.
{\rm (b)} If the type of $X$ is  $[3^1,6^1,3^1,6^1]$ then $m\leq 3$.
\end{proposition}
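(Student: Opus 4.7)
My plan is to exploit the quotient structure $X=E/\Gamma$ supplied by Theorem~\ref{theo:sem=archimedian}, where $E$ denotes the snub-square tiling $E_6$ in part~(a) and the trihexagonal tiling $E_7$ in part~(b), and $\Gamma$ is a finite-index lattice of translations of $E$. Any automorphism of $X$ lifts to an element of $G:=\mathrm{Aut}(E)$ that normalises $\Gamma$, so $\mathrm{Aut}(X)\supseteq N_G(\Gamma)/\Gamma$ and it suffices to bound the number of $N_G(\Gamma)$-orbits on $V(E)$. Writing $T:=T(E)$ for the translation subgroup, $P:=G/T$ for the (finite) point group, and fixing a vertex $v_0$, the stabiliser $S:=\mathrm{Stab}_G(v_0)$ embeds into $P$ (no non-trivial translation fixes $v_0$), and the $T$-orbits on $V(E)$ are in bijection with the coset space $P/S$.

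The main observation I will use is that $N_G(\Gamma)$ always contains $T$ together with every half-turn $\rho\in G$: conjugation by $\rho$ inverts every translation, hence preserves $\Gamma$ setwise. The image of $T\langle\rho\rangle$ in $P$ is generated by the central involution $-I$, so the number of $N_G(\Gamma)$-orbits on $V(E)$ is at most the number of $\langle -I\rangle$-orbits on $P/S$.

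For part~(a), $G=\mathrm{p4g}$ and $P=D_4$ has order $8$. The cyclic vertex-figure $3{,}3{,}4{,}3{,}4$ admits exactly one non-trivial combinatorial symmetry --- the reflection interchanging the two consecutive triangles and fixing the opposite one --- realised globally as the reflection of $E_6$ along the triangle-triangle edge through $v_0$. Hence $|S|=2$ and $|P/S|=4$. Writing $D_4=\langle r,s\mid r^4=s^2=1,\,srs=r^{-1}\rangle$ with $S=\langle s\rangle$, the element $-I=r^2$ acts on the four cosets $S, rS, r^2S, r^3S$ as the double transposition $(S\,r^2S)(rS\,r^3S)$, giving $2$ orbits; so $m\le 2$.

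For part~(b), $G=\mathrm{p6m}$ and $P=D_6$ has order $12$. The vertex-figure $3{,}6{,}3{,}6$ has the Klein four-group as its cyclic symmetry (one half-turn and two reflections), and all of these are realised by elements of $G$ fixing $v_0$, so $|S|=4$ and $|P/S|=3$. Here $-I=r^3$ lies inside $S$, so $\langle -I\rangle$ acts trivially on $P/S$ and the $N_G(\Gamma)$-orbit count is already bounded by the $T$-orbit count $3$, giving $m\le 3$. The one step requiring genuine geometric verification --- the main thing I would want to check carefully in the full write-up --- is that the claimed combinatorial vertex-figure symmetries are actually realised by isometries of $E$; this one handles by exhibiting the explicit mirror lines of $\mathrm{p4g}$ along the triangle-triangle edges of the snub square and those of $\mathrm{p6m}$ through the vertices of the trihexagonal tiling.
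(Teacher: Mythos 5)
Your argument is correct and is essentially the method this paper uses for the companion result, Theorem \ref{theo:no-of-orbits}: the group generated by the translation lattice and the point reflection $x\mapsto -x$ normalises any translation subgroup $\Gamma$ (this is exactly Claims 1 and 2 in that proof), hence descends to the quotient, and it has exactly two (resp.\ three) vertex orbits on the snub square (resp.\ trihexagonal) tiling, which you verify via the point-group coset count rather than by listing orbit representatives as the paper does. The only slip is notational: in this paper's indexing the snub square tiling is $E_5$ and the trihexagonal tiling is $E_6$ (your $E_6$ and $E_7$ are the paper's trihexagonal and snub hexagonal tilings, respectively).
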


Here we present bounds on the number of orbits for the remaining five cases. We prove

\begin{theorem} \label{theo:no-of-orbits}
Let $X$ be a semi-equivelar map on the torus. Let the vertices of $X$ form $m$ ${\rm Aut}(X)$-orbits. {\rm (a)} If the type of $X$ is  $[4^1,8^2]$ then $m\leq 2$.
{\rm (b)} If the type of $X$ is  $[3^4,6^1]$, $[3^1,4^1,6^1,4^1]$ or $[3^1,12^2]$ then $m\leq 3$. {\rm (c)} If the type of $X$ is  $[4^1,6^1, 12^1]$ then $m\leq 6$.
\end{theorem}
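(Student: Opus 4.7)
The plan is to use Theorem \ref{theo:sem=archimedian} to realise $X$ as a quotient $E_i/\Gamma$, where $E_i$ is the Archimedean tiling of the given type and $\Gamma \leq \mathrm{Aut}(E_i)$ is a discrete subgroup acting freely on $\mathbb{R}^2$ with torus quotient. Orientability of the torus together with freeness forces $\Gamma$ to contain no reflections and no non-trivial rotations, so $\Gamma$ is a rank-two sublattice of the translation subgroup $T := T_{E_i}$ of $\mathrm{Aut}(E_i)$. Standard covering-space theory identifies $\mathrm{Aut}(X) \cong N_{\mathrm{Aut}(E_i)}(\Gamma)/\Gamma$, so that $\mathrm{Aut}(X)$-orbits on $V(X) = V(E_i)/\Gamma$ correspond bijectively to $N_{\mathrm{Aut}(E_i)}(\Gamma)$-orbits on $V(E_i)$.

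Because $T$ is abelian, it lies in $N_{\mathrm{Aut}(E_i)}(\Gamma)$ for every sublattice $\Gamma$; hence the $\mathrm{Aut}(X)$-orbits are coarsenings of the $T$-orbits on $V(E_i)$, and $m \leq |V(E_i)/T|$. Inspection of a primitive fundamental domain of $T$ gives $|V(E_i)/T| = 4, 6, 6, 6, 12$ for $E_i$ of type $[4^1,8^2]$, $[3^4,6^1]$, $[3^1,4^1,6^1,4^1]$, $[3^1,12^2]$, $[4^1,6^1,12^1]$ respectively, which is exactly twice the desired bound. To halve it, I would use a $2$-fold rotation $\sigma \in \mathrm{Aut}(E_i)$ about, say, the centre of a largest face. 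Conjugation by $\sigma$ inverts every translation, so $\sigma$ normalises every translation sublattice and descends to an involution of $X$; combined with $T/\Gamma$, it produces a subgroup of $\mathrm{Aut}(X)$ whose $V(X)$-orbits are in bijection with the $\sigma$-orbits on $V(E_i)/T$, an orbit count of at most $|V(E_i)/T|/2$ provided $\sigma$ has no fixed $T$-orbit.

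The main technical point is this freeness check: one must show that no vertex $v$ of $E_i$ satisfies $\sigma(v) - v \in T$, equivalently that no vertex sits at a $T$-half-translate of the rotation centre. This is handled by a direct coordinate computation in each of the five tilings. For instance, in $[4^1,8^2]$, placing octagons centred on $\mathbb{Z}^2$ yields vertices at $(i \pm t, j)$ and $(i, j \pm t)$ where $t = 1/(2+\sqrt{2})$; the four $T$-orbits are represented by $(\pm t, 0)$ and $(0, \pm t)$, and $\sigma = -I$ about the origin pairs them into the two $\sigma$-orbits $\{(\pm t, 0)\}$ and $\{(0, \pm t)\}$, with freeness immediate from $t \notin \tfrac{1}{2}\mathbb{Z}$. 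Analogous coordinate checks (using a dodecagon or hexagon centre for $\sigma$) handle the remaining four tilings and yield the asserted bounds $m \leq 2, 3, 3, 3, 6$.
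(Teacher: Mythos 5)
Your proposal is correct and follows essentially the same route as the paper: realise $X$ as $E_i/\Gamma$ with $\Gamma$ a lattice of translations, adjoin the half-turn $x\mapsto -x$ (which conjugates every translation to its inverse, hence normalises every translation sublattice and descends to $X$), and verify that it pairs up the translation-orbits of vertices, giving the bounds $2,3,3,3,6$. The only quibble is cosmetic: in your $[4^1,8^2]$ computation the vertices $(i\pm t,j)$, $(i,j\pm t)$ with $t=1/(2+\sqrt{2})$ are those of squares (not octagons) centred on $\mathbb{Z}^2$, so your $\sigma=-I$ is the half-turn about a square centre --- which is in fact the same choice of origin the paper makes --- and the orbit count is unaffected.
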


It follows from Proposition \ref{prop:no-of-orbits} and Theorem \ref{theo:no-of-orbits} that a semi-equivelar map $X$ of type $[3^2,4^1,3^1,4^1]$ or $[4^1,8^2]$ is either vertex-transitive or the number of ${\rm Aut}(X)$-orbits of vertices is two. In \cite[Example 4.2]{DM2017}, we have presented such semi-equivelar maps which are not vertex-transitive. Now, we present

\begin{theorem} \label{theo:orbits-3&6}
{\rm (a)} If $[p_1^{n_1}, \dots, p_k^{n_k}] = [3^1,6^1,3^1,6^1]$, $[3^4,6^1]$, $[3^1,4^1,6^1, 4^1]$ or $[3^1, 12^2]$ then there exists a semi-equivelar map $M$ of type $[p_1^{n_1}, \dots, p_k^{n_k}]$ on the torus with exactly three ${\rm Aut}(M)$-orbits of vertices.
{\rm (b)} There exists a semi-equivelar map $N$ of type $[4^1,6^1, 12^1]$ on the torus with exactly six ${\rm Aut}(N)$-orbits of vertices.
\end{theorem}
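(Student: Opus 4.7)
My plan is to realize each required map as a quotient of the appropriate Archimedean tiling by a generic translation sublattice and to identify its automorphism group via the universal cover. By Theorem~\ref{theo:sem=archimedian} any candidate may be written as $E_i/L$ with $L \subseteq {\rm Trans}(E_i)$, and by Theorem~\ref{theo:plane} every automorphism of $E_i/L$ lifts to an automorphism of $E_i$ preserving $L$. Writing ${\rm Aut}(E_i) = {\rm Trans}(E_i) \rtimes G_0$ for the point group $G_0$, this yields
\[
{\rm Aut}(E_i/L) \;\cong\; \bigl({\rm Trans}(E_i) \cdot {\rm Stab}_{G_0}(L)\bigr)/L.
\]

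For each of the five types I would choose $L = \langle w_1, w_2 \rangle$ so that (i) $w_1, w_2$ are sufficiently long that $E_i/L$ is polyhedral, and (ii) ${\rm Stab}_{G_0}(L) = \{\pm I\}$, where $-I$ denotes the central inversion (which lies in $G_0$ for every type in question, each of the underlying tilings containing a $p2$ subgroup). Granting (ii), the ${\rm Aut}(E_i/L)$-orbits of vertices coincide with the orbits of ${\rm Trans}(E_i) \cdot \langle -I \rangle$ on $V(E_i)$. Under ${\rm Trans}(E_i)$ alone there are $3, 6, 6, 6, 12$ vertex orbits for the types $[3^1,6^1,3^1,6^1]$, $[3^4,6^1]$, $[3^1,4^1,6^1,4^1]$, $[3^1,12^2]$, $[4^1,6^1,12^1]$ respectively, one per vertex in a fundamental cell of the translation lattice. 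For the trihexagonal type $[3^1,6^1,3^1,6^1]$, the three vertex representatives are midpoints of edges of the underlying triangular lattice, so $2v \in {\rm Trans}(E_i)$ for each representative $v$ and $-I$ fixes every translation orbit, giving three orbits. For the other four vertex-transitive types the point group acts transitively on the translation orbits and $-I$ (an element of order two) pairs them, yielding $6/2 = 3$ orbits for each of $[3^4,6^1]$, $[3^1,4^1,6^1,4^1]$, $[3^1,12^2]$, and $12/2 = 6$ orbits for $[4^1,6^1,12^1]$. Combined with the matching upper bounds from Proposition~\ref{prop:no-of-orbits}(b) and Theorem~\ref{theo:no-of-orbits}(b)--(c), these are exactly the orbit counts claimed.

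The principal obstacle is the verification of step (ii). For each type one must exhibit explicit generators $w_1, w_2$ of $L$ and then check, case by case, that no nontrivial rotation (of order $3$, $4$, or $6$) and no reflection in $G_0$ carries $L$ to itself. A workable strategy is to take $w_1$ a short translation vector and $w_2 = w_1' + k\,w_1''$, with $w_1', w_1''$ in distinct point-group orbits within ${\rm Trans}(E_i)$ and $k$ a large prime; then any nontrivial rotation or reflection in $G_0$ preserving such an $L$ would force a rational linear relation on $w_1, w_2$ that fails for this tilted choice. The non-chiral cases (point group $D_6$) require additionally excluding six families of reflections, whereas the chiral type $[3^4,6^1]$ needs only the rotations of orders $3$ and $6$ to be ruled out.
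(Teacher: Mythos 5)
Your route is genuinely different from the paper's. The paper proves both parts by exhibiting five explicit finite quotients $M_1,\dots,M_5$ (Example \ref{exam:torus}) and showing directly that each meets the upper bound of Proposition \ref{prop:no-of-orbits} or Theorem \ref{theo:no-of-orbits}: for each $M_i$ it builds an auxiliary $2$-regular graph $\mathcal{G}_i$ from canonical diagonals of faces (and, for type $[3^4,6^1]$, the ``nice'' edges), notes that ${\rm Aut}(M_i)$ embeds in ${\rm Aut}(\mathcal{G}_i)$, and separates the candidate orbits by the lengths of the cycles of $\mathcal{G}_i$ containing their representatives, supplemented by a short local argument where two components happen to have equal length. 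You instead make the automorphism group of the quotient itself minimal by choosing the covering lattice generically, using ${\rm Aut}(E_i/L)\cong N_{{\rm Aut}(E_i)}(L)/L$. This is a legitimate and conceptually attractive alternative --- it explains why minimal-orbit examples must exist rather than certifying particular ones --- but it concentrates all the work in your step (ii), whereas the paper's verification reduces to a finite check on concrete maps.

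Two gaps remain. First, step (ii) --- the existence, for each of the five tilings, of a finite-index sublattice $L$ with ${\rm Stab}_{G_0}(L)=\{\pm I\}$ (and with $E_i/L$ polyhedral) --- is the entire content of the lower bound in your approach, and you only sketch it; it is true, but the case-by-case exclusion of the order-$3$ and order-$6$ rotations and (for the $D_6$ types) the six reflection classes must actually be carried out. Second, your orbit count is under-justified for $[3^1,4^1,6^1,4^1]$ and $[3^1,12^2]$: there the point group $D_6$ has order $12$ but acts transitively on only $6$ translation classes, so the stabilizer of a class has order $2$, and transitivity alone does not decide whether that stabilizer is $\langle -I\rangle$ (in which case $-I$ would fix every class and $T\rtimes\{\pm I\}$ would have six vertex orbits, not three) or a reflection (in which case $-I$ pairs the classes as you claim). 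You need the further geometric observation that the stabilizer of a vertex in the full symmetry group of these two tilings is generated by a reflection, since a half-turn about a vertex would have to interchange a triangle with a hexagon (respectively with a $12$-gon), which is impossible. For $[3^4,6^1]$ and $[4^1,6^1,12^1]$ the order count does force $-I$ to act freely on the translation classes, and for $[3^1,6^1,3^1,6^1]$ your observation that $2v$ lies in the translation lattice for every vertex $v$ settles that case; so the remaining verifications are minor, but they are not supplied.
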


Thus, all the bounds in Proposition \ref{prop:no-of-orbits} and Theorem \ref{theo:no-of-orbits} are sharp.

\section{Examples} \label{sec:examples}

We first present eleven Archimedean tilings on the plane and five semi-equivelar maps on the torus. We need these examples for the proofs of our results in Sections \ref{sec:proofs-1} and  \ref{sec:proofs-2}.

\begin{example} \label{exam:plane}
{\rm Eleven Archimedean tilings on the plane are given in Fig. 1. These are all the Archimedean tilings on the plane $\mathbb{R}^2$ (cf. \cite{GS1977}). All of these are vertex-transitive maps.}
\end{example}

\bigskip

\setlength{\unitlength}{2.5mm}
%% [inline block 0: 10 envs, 77961 chars -> data_tex | \begin{picture}(45,16)(-12,-1) \begin{picture}(58,23)(-6,-7)...]


\caption*{{\bf Figure 1: Archimedean tilings on the plane}}\label{fig:Archimedean}

\end{figure}

%\end{example}

%\newpage

\begin{example} \label{exam:torus}
{\rm Five semi-equivelar maps on the torus are given in Fig. 2.  Maps $M_2$, $M_3$, $M_4$  were first constructed in \cite[Example 4.2]{DM2017}. We need these in the proof of Theorem \ref{theo:orbits-3&6}. }
\end{example}

\begin{figure}[ht!]
\tiny
\tikzstyle{ver}=[]
\tikzstyle{vert}=[circle, draw, fill=black!100, inner sep=0pt, minimum width=4pt]
\tikzstyle{vertex}=[circle, draw, fill=black!00, inner sep=0pt, minimum width=4pt]
\tikzstyle{edge} = [draw,thick,-]
%\centering

% [inline block 1: 6 envs, 24798 chars -> data_tex | \begin{tikzpicture}[scale=.14] ...]

\end{figure}
%\end{example}

\section{Proofs of Theorems \ref{theo:plane} and \ref{theo:sem=archimedian}} \label{sec:proofs-1}

To prove Theorem \ref{theo:plane}, we need Lemmas \ref{lemma:E7}, \ref{lemma:E8}, \ref{lemma:E9}, \ref{lemma:E10} and \ref{lemma:E11}.

If $uv$ is an edge in a map $M$ then we say $u$ is {\em adjacent} to $v$ in $M$. For a map $M$ on a surface, $V(M)$ denotes the vertex set of $M$.

Consider the subset $U_0 =\{u_{7i+4j, j} \, : \, i, j\in\mathbb{Z}\}$ of $V(E_1)$ (see Fig. 1\,(a)). The set $U_0$ satisfies the following property.
\begin{align} \label{eq:first}
\mbox{\em Each $v$ in $V(E_1)\setminus U_0$ is adjacent in $E_1$ to a unique $u$ in $U_0$.}
\end{align}

Trivially, $V(E_1)$ satisfies \eqref{eq:first}. Here we prove (we need this lemma to prove Lemma \ref{lemma:E7}).

%\newpage

\begin{lemma} \label{lemma:U0}
 Up to an automorphism of $E_1$, $U_0$ is the unique proper subset of $V(E_1)$ which satisfies Property $\eqref{eq:first}$.
\end{lemma}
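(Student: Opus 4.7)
The plan is first to translate so that $u_{0,0} \in U$ (permissible since translations belong to ${\rm Aut}(E_1)$), and then to force the structure of $U$ by local analysis near $u_{0,0}$. Two preliminary facts will be needed. First, $U$ is an independent set: if two $U$-vertices $u,u'$ were adjacent, each apex of the two triangles through the edge $uu'$ would, unless itself in $U$, have two $U$-neighbors; forcing these apexes into $U$ and iterating this propagation would yield $U=V(E_1)$, contradicting properness. Second, distinct vertices of $U$ are at graph distance at least $3$: distance $1$ is ruled out by independence, and at distance $2$ the common neighbor (which lies in $V(E_1)\setminus U$) would have two $U$-neighbors. Together these imply that the closed neighborhoods $H_u := \{u\}\cup N_{E_1}(u)$, $u\in U$, partition $V(E_1)$ into $7$-vertex ``hexagons,'' so $U$ has density $1/7$.

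With $H_{u_{0,0}} = \{u_{0,0}, u_{\pm 1,0}, u_{0,\pm 1}, u_{-1,1}, u_{1,-1}\}$, I then examine the vertex $u_{2,0}$, which lies at graph distance $2$ from $u_{0,0}$. Its unique $U$-neighbor must be a neighbor of $u_{2,0}$ at graph distance $\geq 3$ from $u_{0,0}$, so it is one of $u_{3,0}$, $u_{2,1}$, $u_{3,-1}$. I rule out the first as follows: if $u_{3,0}\in U$, inspect the six neighbors of $u_{1,1}$. Four of them lie in $H_{u_{0,0}}\cup H_{u_{3,0}}$ and so are excluded from $U$; the remaining two, $u_{0,2}$ and $u_{1,2}$, lie at graph distance $2$ from $u_{0,0}$ and from $u_{3,0}$ respectively, so by the distance-$3$ property they also are excluded from $U$. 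Thus $u_{1,1}$ has no $U$-neighbor, contradicting \eqref{eq:first}.

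In each remaining case I propagate the same local argument. If $u_{3,-1}\in U$, then applying the unique-neighbor condition in turn at $u_{1,1}$, $u_{-1,2}$, $u_{-2,1}$, $\ldots$ forces $u_{1,2}, u_{-2,3}, u_{-3,1}, \ldots \in U$; the six distance-$3$ vertices so obtained are exactly the $60^\circ$-rotational orbit of $u_{3,-1}$ about $u_{0,0}$, and induction on the distance from the origin yields $U=U_0$. If $u_{2,1}\in U$, the mirror-symmetric argument forces $U=\{u_{a,b}: a\equiv 2b \pmod 7\}$, which is the image of $U_0$ under the reflection $(a,b)\mapsto (b,a) \in {\rm Aut}(E_1)$. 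The main obstacle is making the propagation rigorous without extensive case-work; a clean device is to observe that once $U$ contains three non-collinear vertices at pairwise graph distance $3$ near $u_{0,0}$, they generate a rank-$2$ sublattice $L\subseteq\mathbb{Z}^2$ of index $7$ all of whose nonzero elements have graph distance $\geq 3$, and the density-$\tfrac{1}{7}$ hexagon partition forces $U=L$. A direct enumeration shows that, up to the origin, there are exactly two such index-$7$ sublattices, namely $U_0$ and its reflection, so $U$ is determined up to an automorphism of $E_1$.
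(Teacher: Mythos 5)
Your overall strategy coincides with the paper's: normalize so that $u_{0,0}\in U$, show that $U$ is an independent set (the paper's Claim 1), locate the unique $U$-neighbour of $u_{2,0}$ among $u_{3,0}$, $u_{2,1}$, $u_{3,-1}$, eliminate $u_{3,0}$, and propagate outward in the two surviving mirror-image cases. Your added structural observations are correct and genuinely streamline parts of the argument: distinct vertices of $U$ must be at graph distance at least $3$ (a vertex at distance $2$ would give a common neighbour with two $U$-neighbours), so the closed neighbourhoods $\{u\}\cup N(u)$, $u\in U$, partition $V(E_1)$ into $7$-element cells; this lets you kill the case $u_{3,0}\in U$ by showing $u_{1,1}$ has no admissible $U$-neighbour at all, which is tidier than the paper's two-subcase contradiction, and it also makes each individual propagation step (e.g.\ forcing $u_{1,2}\in U$ from $u_{1,1}$) a clean elimination. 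The identification of the paper's set $A$ as the $60^{\circ}$-rotation orbit of $(3,-1)$ about the origin, and of the second case as the image of $U_0$ under the coordinate swap, are both correct.

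The step that does not hold as stated is the ``clean device.'' Knowing that $U$ contains three non-collinear points of the index-$7$ sublattice $L$ generated by $(3,-1)$ and $(2,-3)$ does not, together with the $7$-cell partition, force $U=L$. The partition argument proves only the implication: \emph{if} $L\subseteq U$ then $U=L$ (the cells of the points of $L$ already cover $V(E_1)$, so any additional point of $U$ would lie within distance $1$ of some $\ell\in L\subseteq U$, contradicting independence). It provides no mechanism for promoting three points of $L$ to all of $L$, and that promotion is precisely the content of the paper's Claims 2--5: one shows that whenever $u_{k,\ell}\in U$ has one point of its distance-$3$ rotation orbit $V_{k,\ell}$ in $U$, the entire orbit lies in $U$ (Claim 2), then iterates this to get $\bigcup_n V_n\subseteq U$ (Claim 3), and finally needs a connectivity and regularity argument for the auxiliary $6$-regular graph on $U_0$ (Claims 4 and 5) to see that the iterated orbits exhaust $U_0$. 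Your phrase ``induction on the distance from the origin'' gestures at this but is not a substitute for it, since the natural induction has to be organized around the orbit-propagation rather than around graph distance. So the skeleton of your proof is right and several local steps are improvements, but the device you offer to avoid the case-work does not discharge the key obligation $U_0\subseteq U$; a worked-out version of the propagation (as in the paper's Claims 2--5) is still required before your density argument can finish the proof.
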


\begin{proof}
Let $V=V(E_1)$ be the vertex set of $E_1$. Let $U\subseteq V$ satisfy Property \eqref{eq:first}.

\medskip

\noindent {\bf Claim 1.} If there exist two vertices in $U$ which are adjacent in $E_1$ then $U=V$.

\smallskip

Suppose that $u, u_1 \in U$, where $uu_1$ is an edge in $E_1$. Let the link of $u$ in $E_1$ be $C_6(u_1, \dots, u_6)$. Then both $u$, $u_1$ are  adjacent to $u_2$ and $u, u_1\in U$. Since $U$ satisfies \eqref{eq:first}, $u_2\notin V\setminus U$. So, $u_2\in U$. Successively, we get $u_2, u_3, u_4, u_5, u_6\in U$. Thus, all the neighbours of $u$ are in $U$. Since $E_1$ is connected, this implies that $U=V(E_1)$. This proves Claim 1.

\smallskip

Now, assume that $U\subsetneqq  V(E_1)$. Then $W = V\setminus U \neq\emptyset$ and hence there exist $v\in W$ and $u\in U$ such that $u$ and $v$ are adjacent in $E_1$. For $x\in W$, let $N(x)$ be the unique vertex $y\in U$ so that $xy$ is an edge in $E_1$. Then, $x \mapsto N(x)$ defines a mapping $N : W \to U$.

Up to an automorphism of $E_1$, assume that $u_{1,0}\in W$ and $u_{0,0}\in U$.
Since $U$ satisfies \eqref{eq:first}, all the neighbours of $u_{1,0}$ other than $u_{0,0}$ are in $W$. So, $\{u_{0,1}, u_{1,1}, u_{2,0}, u_{2,-1}, u_{1,-1}\}\subseteq W$.
Therefore, $N(u_{2,0})\in \{u_{2,1}, u_{3,0}, u_{3,-1}\}$.
If $N(u_{2,0}) = u_{3,0}$ then other neighbours of $u_{2,0}$ are in $W$ and hence $\{u_{2,1}, u_{3,-1}\}\subseteq W$. This implies $N(u_{1,1}) = u_{0,2}$ or $u_{1,2}$.
In the first case, $u_{0,1}$ has two neighbours $u_{0,0}, u_{0,2}$ in $U$, a contradiction.
In the second case, $u_{2,1}$ has two neighbours $u_{3,0}, u_{1,2}$ in $U$, a contradiction.
Thus, $N(u_{2,0}) \neq u_{3,0}$. Therefore, $N(u_{2,0}) = u_{2,1}$ or $u_{3,-1}$.
Up to an automorphism of $E_1$, we can assume that $N(u_{2,0}) = u_{3,-1}$.
By, the similar arguments, $N(u_{2,-2}) = u_{3,-2}$ or $u_{2,-3}$.
Since $u_{3,-1}\in U$ is adjacent to $u_{2,-1}\in W$, $u_{3,-2}\not\in U$. Thus, $N(u_{2,-2}) = u_{2,-3}$. Similarly,
$N(u_{0,-2}) = u_{-1,-2}$, $N(u_{-2,0}) = u_{-3,1}$, $N(u_{-2,2}) = u_{-2,3}$  and $N(u_{0,2}) = u_{1,2}$. Thus
\begin{align*}
u_{3,-1}, u_{2,-3}, u_{-1,-2}, u_{-3,1}, u_{-2,3}, u_{1,2} \in U.
\end{align*}

Let $A:= \{(3,-1), (2,-3), (-1,-2), (-3,1), (-2,3), (1,2)\}\subseteq \mathbb{Z}\times \mathbb{Z}$. For $(k,\ell)\in \mathbb{Z}\times \mathbb{Z}$, let $V_{k,\ell} := \{u_{k+p,\ell+q} \, : \, (p, q)\in A\}$. (There are 18 vertices at a distance 3 in the graph $E_1$ from $u_{k,\ell}$. $V_{k,\ell}$ consists of 6 out of these 18.)
Clearly, $u_{p,q}\in V_{k,\ell}$ if and only if $u_{k,\ell}\in V_{p,q}$.

\medskip

\noindent {\bf Claim 2.} If $u_{k,\ell}\in U$ and $V_{k,\ell}\cap U\neq\emptyset$ then $V_{k,\ell}\subseteq U$.

\smallskip

Since $U\neq V(E_1)$ satisfies Property \eqref{eq:first} and $u_{k,\ell}\in U$, it follows that all the vertices of $E_1$ at a distance 1 or 2 from $u_{k,\ell}$ in the graph $E_1$ are in $W$.
Assume, without loss, that $u_{k+3, \ell-1}\in V_{k,\ell}\cap U$. (The other five cases are similar.) Then, by the same arguments as above, $N(u_{k+2,\ell-2}) = u_{k+2,\ell-3}$, $N(u_{k+0,\ell-2}) = u_{k-1,\ell -2}$, $N(u_{k-2,\ell}) = u_{k-3,\ell+1}$, $N(u_{k-2,\ell+2}) = u_{k-2,\ell+3}$  and $N(u_{k,\ell+2}) = u_{k+1,\ell+2}$. These imply,
$u_{k+2,\ell-3}$, $u_{k-1,\ell -2}$, $u_{k-3,\ell+1}$, $u_{k-2,\ell+3}$, $u_{k+1,\ell+2}\in U$. This proves Claim 2.

\smallskip

Let $V_0 := \{u_{0,0}\}$ and $V_n := \bigcup_{u_{k,\ell}\in V_{n-1}}V_{k,\ell}$ for $n\geq 1$.

\medskip

\noindent {\bf Claim 3.} $V_{n} \subseteq U$ for $n\geq 0$.

\smallskip

We prove the claim by induction on $n$.
From above, $V_0, V_1\subseteq U$. Assume that $n\geq 2$ and the claim is true for all $m <  n$. So, $V_{n-2}, V_{n-1} \subseteq U$. Let $u_{k,\ell}\in V_{n-1}$.
From the definition of $V_{n-1}$, there exists $u_{p,q}\in V_{n-2}$ such that $u_{k,\ell}\in V_{p,q}$. Then $u_{p,q}\in V_{k, \ell}$. So, $u_{p,q}\in V_{k, \ell}\cap V_{n-2} \subseteq V_{k, \ell}\cap U$. Thus, $V_{k, \ell}\cap U\neq\emptyset$. Also, $u_{k,\ell}\in V_{n-1} \subseteq U$. Therefore, by Claim 2, $V_{k, \ell}\subseteq U$. Since $u_{k,\ell}$ is an arbitrary vertex in $V_{n-1}$, it follows that $V_n = \bigcup_{u_{k,\ell}\in V_{n-1}}V_{k,\ell}\subseteq U$. The claim now follows by induction.

\smallskip

Consider the graph $G$ whose vertex set is $U_0$. Two vertices in $G$ are adjacent if their distance in $E_1$ is 3. Then the set of neighbours of a vertex $u_{k, \ell}$ is the set $V_{k,\ell}$ of six vertices. Thus, $G$ is a 6-regular graph.

\medskip

\noindent {\bf Claim 4.} $G$ is connected.

\smallskip

Observe that $u_{7i,0}\mbox{-}u_{7i+3,-1}\mbox{-}u_{7i+4,1}\mbox{-}u_{7i+7,0}$ is a path (of length 3) in $G$ from $u_{7i,0}$ to $u_{7(i+1),0}$. This implies that $u_{7h,0}$ and $u_{7i,0}$ are connected by a path in $G$, for $h\neq i\in \mathbb{Z}$. In particular, $u_{7i,0}$ is connected by a path in $G$ to $u_{0,0}$, for all $i\in \mathbb{Z}$.

From the definition of $G$, $u_{7i+4j, j}u_{7i+4j+3, j-1}$ is an edge in $G$ for all $i, j$.
This implies that, for any vertex $u_{7i+4j, j}$ of $G$, there exists a path of length $|j|$ from $u_{7i+4j, j}$ in $G$ to a vertex of the form $u_{7k, 0}$ for some $k\in \mathbb{Z}$. Therefore, by the conclusion in the previous paragraph, any $u_{7i+4j, j}$ is connected by a path in $G$ to $u_{0,0}$. Claim 4 follows from this.

\medskip

\noindent {\bf Claim 5.} $\bigcup_{n\geq 0}V_{n} = U_0$.

\smallskip

Since each element in $A$ is of the form $(7i+4j,j)$, it follows that $V_{k,\ell}\subseteq U_0$ for $u_{k,\ell}\in U_0$. Thus, $\bigcup_{n\geq 0}V_n \subseteq U_0$.  Let $H$ be the induced graph $G[\cup_{n\geq 0}V_{n}]$. Clearly, the set of neighbours of a vertex $u_{k, \ell}$ in $H$ is $V_{k,\ell}$. Hence $H$ is also 6-regular.
Since $G$ is connected and both $G$ and the subgraph $H$ are 6-regular, it follows that $H=G$. This implies that $\cup_{n\geq 0}V_{n} = U_0$. This proves Claim 5.

\smallskip

From Claims 3 and 5, $U_0 = \bigcup_{n=0}^{\infty}V_n \subseteq U$.
Suppose $U\neq U_0$. Let $u\in U\setminus U_0$. Since $U_0$ satisfies \eqref{eq:first}, there exists $v\in U_0$ such that $uv$ is an edge in $E_1$. Then, $u, v \in U$ and $uv$ is an edge in $E_1$. Therefore, by Claim 1, $U=V$. This is not possible by the assumption. Thus, $U = U_0$. This completes the proof.
\end{proof}

\begin{lemma} \label{lemma:E7}
Up to isomorphism, $E_7$ (given in Example $\ref{exam:plane}$) is the unique semi-equivelar map of type $[3^4, 6^1]$ on the plane. \end{lemma}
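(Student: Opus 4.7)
The plan is to reduce the uniqueness of $E_7$ to the already-established uniqueness of $E_1$ as a semi-equivelar map of type $[3^6]$ (from \cite{DU2005, DM2017}) together with Lemma \ref{lemma:U0}, by refining each hexagonal face of $X$ into six triangles via a new centre vertex.

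Let $X$ be a semi-equivelar map of type $[3^4,6^1]$ on the plane. Since each vertex of $X$ lies on exactly one hexagonal face, the hexagons of $X$ are pairwise vertex-disjoint. I will build a refinement $Y$ of $X$ by inserting a new vertex $c_H$ in the interior of each hexagonal face $H$ and joining $c_H$ to all six vertices of $H$, thereby subdividing $H$ into six triangles. The first step is to verify that $Y$ is a polyhedral semi-equivelar map of type $[3^6]$ on the plane. At each $c_H$ the face-cycle is a $6$-cycle of triangles by construction; at a pre-existing vertex $v \in V(X)$ the face-cycle $T_1$-$T_2$-$T_3$-$T_4$-$H$ becomes $T_1$-$T_2$-$T_3$-$T_4$-$\{c_H,v,v_+\}$-$\{c_H,v_-,v\}$ after the subdivision, where $v_\pm$ are the two hexagon-neighbours of $v$ in the unique hexagon $H$ at $v$. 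Pairwise disjointness of the hexagons of $X$ ensures the polyhedral condition holds. By the known uniqueness of $E_1$, there is an isomorphism $\varphi : Y \to E_1$.

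Next, set $U := \{c_H : H\text{ is a hexagonal face of }X\} \subseteq V(Y)$. Then $\varphi(U)$ is a proper subset of $V(E_1)$, since $V(Y)\setminus U = V(X)$ is nonempty, and it satisfies Property \eqref{eq:first}: each $v\in V(X)$ lies on exactly one hexagonal face $H$ of $X$, hence is adjacent in $Y$ to the unique element $c_H\in U$. Applying Lemma \ref{lemma:U0} yields an automorphism $\psi$ of $E_1$ with $\psi(\varphi(U)) = U_0$.

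Finally, observe that $X$ is recovered from the pair $(Y,U)$ by deleting each $c_H \in U$ together with its six incident edges and capping the resulting $6$-cycle hole by a hexagonal face; applying the same recipe to the pair $(E_1, U_0)$ reproduces $E_7$ by its description in Fig.~1(g). Hence $\psi \circ \varphi$ descends to an isomorphism $X \cong E_7$. The main technical obstacle lies in the first step: confirming that $Y$ is semi-equivelar of type $[3^6]$ and that the refinement remains polyhedral (any two faces meet in $\emptyset$, a vertex, or an edge), which rests entirely on the fact that distinct hexagons of $X$ are vertex-disjoint. The remainder of the argument is a straightforward transfer of structure across $\varphi$ followed by a direct application of Lemma \ref{lemma:U0}.
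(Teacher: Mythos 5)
Your proposal is correct and follows essentially the same route as the paper: subdivide each hexagon of $X$ by a central vertex to obtain a $[3^6]$-map, identify it with $E_1$ via the known uniqueness result, observe that the set of new centres satisfies Property \eqref{eq:first}, and invoke Lemma \ref{lemma:U0}. The only cosmetic difference is that you phrase the conclusion as transporting $(Y,U)$ to $(E_1,U_0)$ by $\psi\circ\varphi$ and then deleting the centres, whereas the paper packages the same reversal as a one-to-one correspondence between $[3^4,6^1]$-maps and proper subsets of $V(E_1)$ satisfying Property \eqref{eq:first}.
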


\begin{proof}
Let $Y$ be a semi-equivelar map of type $[3^4, 6^1]$ on the plane. For each 6-face $\sigma$ (face whose boundary is a 6-gon) of $Y$, choose a vertex $v_{\sigma}$ at the interior of $\sigma$ and join $v_{\sigma}$ by edges to all the six vertices of $\sigma$. Let $\widehat{Y}$ be the new map obtained from $Y$ by this way. Then $\deg_{\widehat{Y}}(v_{\sigma}) =6$ for each 6-face $\sigma$ of $Y$. Now, if $u$ is a vertex of $Y$ then $u$ is in a unique 6-face and $\deg_Y(u)=5$. These imply, that $\deg_{\widehat{Y}}(u) = 5+1=6$.
Clearly, each face in $\widehat{Y}$ is a triangle. Thus, $\widehat{Y}$ is a semi-equivelar map of type $[3^6]$. Hence, by \cite[Lemma 3.2]{DU2005}, $\widehat{Y} \cong E_1$. So, we can assume that $\widehat{Y} = E_1$.
Let $U = \{v_{\sigma} \, : \sigma$ is a 6-face of $Y\}$. Then, $V = V(\widehat{Y}) = U\sqcup V(Y)$. Since each $u\in V(Y)$ is in unique 6-face $\sigma$ of $Y$, each vertex $u\in V(Y)$ is adjacent in $\widehat{Y}$ to a unique vertex $v_{\sigma}$ in $U$. Thus, the subset $U$ of $V(\widehat{Y})$ satisfies Property \eqref{eq:first}.

On the other hand, let $U\subsetneqq V(E_1)$ satisfy Property \eqref{eq:first}. Let $N: V(E_1)\setminus U \to  U$ be the mapping as in the proof of Lemma \ref{lemma:U0}. Let $Z$ be the map obtained from $E_1$ by removing all the vertices in $U$ and all the edges through vertices in $U$. Now, let $u$ be a vertex in $U$. Then the neighbours of $u$ in $E_1$ form a 6-face (the 6-cycle ${\rm lk}_{E_1}(u)$) in $Z$. Since $U$ satisfies Property \eqref{eq:first}, it follows that each vertex $v$ in $Z$ is on a unique 6-face
(the 6-cycle ${\rm lk}_{E_1}(N(v)$). Also, the edges through $v$ in $Z$ are those in $E_1$ except the edge $vN(v)$. So, number of edges through $v$ in $Z$ is 5. These imply that $Z$ is a semi-equivelar map of type $[3^4, 6^1]$ on the plane.

Thus, there is a one to one correspondence between semi-equivelar maps of type $[3^4, 6^1]$ on the plane and the sets $U\subsetneqq V(E_1)$ which satisfy Property \eqref{eq:first}.  The result now follows by Lemma \ref{lemma:U0}.
\end{proof}

\begin{remark} \label{remark:U0}
{\rm If we take $u_{0,0}=(0,0)$ then $V(E_1)$ is a plane lattice in which $U_0$ is a sublattice of index 7. The mapping $N : V(E_1)\setminus U_0 \to U_0$ is a 6 to 1 map. If we consider the equivalence relation on $V(E_1)$ as $x\sim y$ if $x-y\in U_0$ then the quotient complex $E_1/\sim$ is the unique 7-vertex triangulation of the torus. In the proof of Lemma \ref{lemma:U0}, if we choose $N(u_{2,0}) =u_{2,1}$ (in place of $u_{3,-1}$) then we get $U= \{u_{7i+2j,j} \, : \, i, j\in\mathbb{Z}\}$ ($=\overline{U}_0$ say) in place of $U_0$. Observe that $\overline{U}_0 = \beta(U_0)$, where $\beta$ is the automorphism of $E_1$ given by $\beta(u_{i,j}) = u_{i +j, -j}$. ($\beta$ is the orthogonal reflection w.\,r.\,t. the line joining $u_{0, 0}$ and $u_{1, 0}$.) It follows from the proof of Lemma \ref{lemma:U0} that $U_0$, $\overline{U}_0$ and $V(E_1)$ are the only sets which contain $u_{0, 0}$ and satisfy Property \eqref{eq:first}. In fact, there are exactly 14 proper subsets of $V(E_1)$ which satisfy Property \eqref{eq:first}. These are $U_0, \dots, U_6, \overline{U}_0, \dots, \overline{U}_6$, where $U_i := \alpha^i(U_0)$, $\overline{U}_i = \alpha(\overline{U}_i) = \beta(U_i)$ for $1\leq i\leq 6$, and $\alpha$ is the translation $x\mapsto x+ (u_{1,0}-u_{0, 0})$. Among these, $U_0$ and $\overline{U}_0$ are sublattices of $E_1$. Observe that the subset $U_0$ of $V(E_1)$ corresponds to $E_7$ and the subset $\overline{U}_0$ of $V(E_1)$  corresponds to the other snub hexagonal tiling $\beta(E_7)$.}
\end{remark}

\begin{lemma} \label{lemma:E8}
Let $E_8$ be as in Example $\ref{exam:plane}$.
Let $X$ be a semi-equivelar map on the plane. If the type of $X$ is $[3^1, 4^1, 6^1, 4^1]$ then $X\cong E_8$.
\end{lemma}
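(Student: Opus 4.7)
The plan is to associate to any $[3^1, 4^1, 6^1, 4^1]$-map $X$ on the plane a derived map $Y$ of type $[6^3]$, invoke the uniqueness of the hexagonal tiling $E_3$, and then recover $X$ uniquely from $Y$.

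First I would analyse the local structure at each square. Since the vertex type $3.4.6.4$ forces every edge of $X$ to be either a triangle-square edge or a square-hexagon edge, around any square $S$ with cyclic boundary $a,b,c,d$ the edge-types must alternate: two opposite sides (say $ab$ and $cd$) are shared with triangles $T_{ab},T_{cd}$ and the other two ($bc$ and $da$) are shared with hexagons. Thus each square has two opposite \emph{triangle-neighbours}. I would then define $Y$ as follows: the vertices of $Y$ are the triangles of $X$; for each square $S$ of $X$ put an edge of $Y$ joining its two triangle-neighbours; for each hexagon $H$ of $X$ place a face of $Y$ whose boundary is the $6$-cycle consisting of the triangles at the six vertices of $H$, linked by the six squares around $H$. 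Using the embedding of $X$ in $\mathbb{R}^2$ (put each vertex of $Y$ at the centroid of its triangle and route each edge through the interior of its square) gives $Y$ a planar embedding. One then checks that each vertex of $Y$ has degree $3$, every face of $Y$ is a hexagon, and the face-cycle at a vertex $T$ of $Y$ is $[6^3]$: the three faces around $T$ are the three hexagons of $X$ at the three corners of $T$, in cyclic order. Hence $Y$ is a semi-equivelar map of type $[6^3]$ on the plane.

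By the uniqueness of the hexagonal tiling on the plane (established elsewhere in the proof of Theorem \ref{theo:plane}, or deduced by duality from the uniqueness of $E_1$ in \cite{DU2005}), we obtain $Y \cong E_3$. Finally, the assignment $X \mapsto Y$ has a canonical inverse: from any $[6^3]$-map $Y'$ one builds a unique $[3^1,4^1,6^1,4^1]$-map $X'$ by taking the vertices of $X'$ to be the incident (vertex, face) pairs of $Y'$, while the triangles, squares, and hexagons of $X'$ correspond respectively to the vertex-stars, edges, and face-stars of $Y'$. This is the standard expansion (cantellation) of $Y'$, and applied to $Y=E_3$ it produces exactly $E_8$. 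Therefore $X \cong E_8$, which is also vertex-transitive since $E_8$ is.

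The main obstacle is the local-to-global verification used in the construction of $Y$: checking that the six triangle-neighbours encountered as one walks around a hexagon of $X$ are genuinely distinct, and that their cyclic order around the hexagon matches the cyclic order of hexagons around each triangle of $X$, so that $Y$ is truly a polyhedral map of type $[6^3]$ on $\mathbb{R}^2$. Once this bookkeeping is carried out by propagating the $3.4.6.4$ pattern around each square and each hexagon, the reduction to the classification of $[6^3]$-tilings and the reconstruction of $X$ by expansion are routine.
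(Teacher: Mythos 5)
Your reduction is a genuinely different route from the paper's. The paper proves the lemma directly: starting from one hexagon it propagates the face structure along strips $P_0,Q_0,Q_1,\dots$ and uses two Euler-characteristic lemmas on the $2$-disc (Lemmas \ref{lemma:disc3464-1} and \ref{lemma:disc3464-2}) to show these strips never close up, after which an explicit isomorphism with $E_8$ is read off. Your local analysis is sound: the alternation of triangle-edges and hexagon-edges around each square follows from the face-cycle $3.4.6.4$, and the distinctness of the six triangles around a hexagon follows from polyhedrality of $X$ (a triangle and a hexagon never share an edge, hence meet in at most one vertex). The reconstruction of $X$ from $Y$ as the cantellation (vertices of $X$ being the incident vertex--face pairs of $Y$, squares of $X$ the edges of $Y$, and so on) is also correct and would finish the proof once $Y\cong E_3$ is known.

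The gap is in the step you call routine bookkeeping. For $Y$ to be a semi-equivelar, hence polyhedral, map of type $[6^3]$ --- which is what the uniqueness statement for $E_3$ applies to --- you need more than the local checks you list: you must also show that two faces of $Y$ meet in at most one edge, equivalently that two hexagons of $X$ are never joined by two distinct squares, and more generally that no short circuit of faces of $X$ encloses a bounded ``hole'' in the plane. This cannot be obtained by ``propagating the $3.4.6.4$ pattern around each square and each hexagon'': a quotient of $E_8$ by a small translation lattice is a map on the torus with identical local structure in which exactly these degeneracies occur, so planarity must enter essentially. Ruling them out requires an Euler-characteristic argument on the disc bounded by such a circuit, which is precisely the content of the paper's Lemmas \ref{lemma:disc3464-1} and \ref{lemma:disc3464-2}. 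So the hard part of the paper's proof reappears, unaddressed, inside your ``routine'' step; once those disc lemmas (or an analogous count for a disc tiled by hexagons with interior vertices of degree $3$) are supplied, your argument goes through.
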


To prove Lemma \ref{lemma:E8}, we need the following two technical lemmas.

\begin{lemma} \label{lemma:disc3464-1}
Let $X$ be a map on the $2$-disk $\mathbb{D}^2$ whose faces are triangles, quadrangles and hexagons. For a vertex $u$ of $X$, let $n_3(u),$ $n_4(u)$ and $n_6(u)$ be the number of triangles, quadrangles and hexagons through $u$ respectively. Then $X$ can not satisfy both the following: {\rm (i)} $(n_3(u), n_4(u), n_6(u)) = (1, 2, 1)$ for each internal vertex $u$, {\rm (ii)} boundary $\partial \mathbb{D}^2$ has $2k+\ell$ vertices, among these $2k$ vertices are in one hexagon and one quadrangle, and $\ell$ vertices are in one hexagon only, for some $k > 0$ and $k-1 \le l \le k+2$.
\end{lemma}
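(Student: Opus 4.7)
The plan is to derive a contradiction by combining vertex-face incidence counts with Euler's formula on the disk. Let $V_i$ denote the number of interior vertices of $X$, and let $f_3, f_4, f_6$ be the numbers of triangular, quadrangular, and hexagonal faces. Since $(n_3(u), n_4(u), n_6(u)) = (1, 2, 1)$ at every interior vertex while every boundary vertex contributes either $(0, 1, 1)$ (for the $2k$ hexagon-and-quadrangle vertices) or $(0, 0, 1)$ (for the $\ell$ hexagon-only vertices), summing the three incidence counts $\sum_u n_j(u)$ separately by face type yields
\begin{align*}
3 f_3 = V_i, \qquad 4 f_4 = 2 V_i + 2k, \qquad 6 f_6 = V_i + 2k + \ell.
\end{align*}

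Next I would count edges via vertex degrees. An interior vertex sees four faces and therefore has degree $4$. A boundary vertex of the disk lies on two boundary edges, and the incident faces fan out between them, so its degree is one more than the number of incident faces; hence the $2k$ boundary vertices incident to two faces have degree $3$, and the $\ell$ boundary vertices incident to a single face have degree $2$. Therefore
\[
2E \;=\; 4 V_i + 3(2k) + 2\ell \;=\; 4 V_i + 6k + 2 \ell.
\]
Combining $V = V_i + 2k + \ell$, $E = 2V_i + 3k + \ell$, and $F = f_3 + f_4 + f_6 = (6 V_i + 5k + \ell)/6$ (obtained by adding the three face-count equations), Euler's relation $V - E + F = \chi(\mathbb{D}^2) = 1$ simplifies to
\[
-V_i - k + \frac{6 V_i + 5k + \ell}{6} \;=\; 1, \qquad \text{i.e.,} \qquad \ell - k \;=\; 6.
\]
This directly contradicts the hypothesis $k - 1 \le \ell \le k + 2$, which forces $\ell - k \in \{-1, 0, 1, 2\}$, and so no such map $X$ can exist.

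There is no real obstacle beyond careful bookkeeping; the only subtlety worth double-checking is the degree of each kind of boundary vertex, which is determined by the fan of faces between its two incident boundary edges. As a cross-check, the face-angle identity $3 f_3 + 4 f_4 + 6 f_6 = 4 V_i + 4k + \ell$ (obtained by summing $n_3(u) + n_4(u) + n_6(u)$ over all vertices) is consistent with the three equations above, which reassures me that the counting is correct.
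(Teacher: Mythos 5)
Your proof is correct and uses the same Euler-characteristic double-counting as the paper: the incidence equations $3f_3=V_i$, $4f_4=2V_i+2k$, $6f_6=V_i+2k+\ell$, the degree counts ($4$ for interior, $3$ and $2$ for the two kinds of boundary vertices), and the resulting edge count all match the paper's. The only difference is organizational — the paper first uses integrality of $n_4$ and $n_6$ to force $\ell=k$ and then shows $f_0-f_1+f_2=0$, whereas you compute $V-E+F=(\ell-k)/6$ directly and rule out every $\ell$ in the stated range in one stroke, a slight streamlining of the same argument.
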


\begin{proof}
Let $f_0, f_1$ and $f_2$ denote the number of vertices, edges and faces of $X$ respectively. For $i= 3, 4, 6$, let $n_i$ denote the total number of $i$-gons in $X$. Let there be $n$ internal vertices. Then $f_0=n+2k+\ell$ and
$f_1= (4\times n+ 3\times 2k+ 2\times \ell)/2$.

Suppose $X$ satisfies (i) and (ii). Then $n_3=n/3$, $n_4= (2n+ 2k)/4 = (n+k)/2$ and $n_6=(n+2k+\ell)/6$. Since $n_4\in\mathbb{Z}$, $n+k$ is even. Also $n_6\in\mathbb{Z}$ implies $n+2k+\ell$ is even and hence $k+\ell$ is even.
Since $k-1 \le \ell \le k+2$, we get that $\ell = k$ or $k+2$. Since $n+k = 2n_3 + (n_3+k)$, it follows that $n_3+k$ is also even, say $n_3+k = 2m$. Now, if $\ell = k+2$ then $n_6 =(n+2k+\ell)/6= (3(n_3+k)+2)/6= m+1/3\not\in\mathbb{Z}$, a contradiction. Thus, $\ell=k$. Then, $f_0 = n+3k$, $f_1= (4n+ 6k+ 2k)/2 = 2n+4k$ and
$f_2= n_3+n_4+n_6 = n/3 + (n+k)/2 + (n+2k+k)/6= n+k$. Then $f_0-f_1+f_2 = 0$. This is not possible since the Euler characteristic of $\mathbb{D}^2$ is 1. This completes the proof.
\end{proof}

\begin{lemma} \label{lemma:disc3464-2}
Let $X$ be a map on the $2$-disk $\mathbb{D}^2$ whose faces are triangles, quadrangles and hexagons. For a vertex $u$ of $X$, let $n_3(u),$ $n_4(u)$ and $n_6(u)$ be the number of triangles, quadrangles and hexagons through $u$ respectively. Then $X$ can not satisfy both the following: {\rm (i)} $(n_3(u), n_4(u), n_6(u)) = (1, 2, 1)$ for each internal vertex $u$, {\rm (ii)} boundary $\partial \mathbb{D}^2$ has $2k+\ell$ vertices, among these $2k$ vertices are in one quadrangle and one triangle and $\ell$ vertices are in two quadrangles and one triangle, for some $k > 0$ and $k-1 \leq \ell \leq k+1$.
\end{lemma}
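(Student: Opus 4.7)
\medskip

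\noindent\textbf{Proof plan for Lemma \ref{lemma:disc3464-2}.}
The plan is to imitate the Euler-characteristic counting argument used in the proof of Lemma \ref{lemma:disc3464-1}: express $f_0,f_1,f_2$ as linear combinations of $n$ (the number of interior vertices), $k$, and $\ell$, then plug into $f_0-f_1+f_2=1$ and derive a linear relation between $k$ and $\ell$ incompatible with hypothesis (ii).

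First I would compute $f_0=n+2k+\ell$ directly from (ii). For $f_1$ I would use the handshake identity. By (i), every interior vertex has four faces around it and is therefore of degree $4$. A boundary vertex $u$ has $\deg(u)$ equal to one plus the number of faces through $u$ (because the link at a boundary vertex is a path rather than a cycle). So the first type of boundary vertex (in one quadrangle and one triangle) has degree $2+1=3$, and the second type (in two quadrangles and one triangle) has degree $3+1=4$. This yields
\[
2f_1 \;=\; 4n+3(2k)+4\ell \;=\; 4n+6k+4\ell,\qquad f_1=2n+3k+2\ell.
\]

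For $f_2$ I would count vertex-face incidences by face type. Letting $n_3,n_4,n_6$ denote the numbers of triangles, quadrangles and hexagons, the incidence data $(1,2,1)$, $(1,1,0)$, $(1,2,0)$ at the three vertex classes give
\[
3n_3 = n+2k+\ell,\qquad 4n_4 = 2n+2k+2\ell,\qquad 6n_6 = n,
\]
hence $f_2=n_3+n_4+n_6=(6n+7k+5\ell)/6$.

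Substituting into $f_0-f_1+f_2=\chi(\mathbb{D}^2)=1$ and simplifying over a common denominator gives
\[
1 \;=\; (n+2k+\ell)-(2n+3k+2\ell)+\frac{6n+7k+5\ell}{6} \;=\; \frac{k-\ell}{6},
\]
so $k-\ell=6$, which contradicts the hypothesis $k-1\le\ell\le k+1$ (equivalently $|k-\ell|\le 1$). The only subtlety I expect is getting the boundary degree count right (the ``$+1$'' coming from the path-link at a boundary vertex) and correctly tallying the incidence vector of each boundary class; once those are in place the Euler computation is a one-line cancellation.
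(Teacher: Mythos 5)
Your proposal is correct and follows essentially the same Euler-characteristic double-counting argument as the paper; your incidence counts, the degree formula at boundary vertices, and the final computation $f_0-f_1+f_2=(k-\ell)/6$ all check out. The only (harmless) difference is that the paper first uses integrality of $n_4$ to force $\ell=k$ and then finds $\chi=0$, whereas you keep $\ell$ general and read off $k-\ell=6$ directly, which contradicts $|k-\ell|\le 1$ — a slight streamlining of the same idea.
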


\begin{proof}
Let $f_0, f_1$ and $f_2$ denote the number of vertices, edges and faces of $X$ respectively. For $i= 3, 4, 6$, let $n_i$ denote the total number of $i$-gons in $X$. Let there be $n$ internal vertices. Then $f_0=n+2k+\ell$ and $f_1= (4\times n+3\times 2k+ 4\times\ell)/2$.

Suppose $X$ satisfies (i) and (ii).
Then $n_6=n/6$, $n_4 = (2n+2k+2\ell)/4 = (12n_6+2k+2\ell)/4 = 3n_6+(k+\ell)/2$.
Thus, $k+\ell$ is even. Since $k-1 \le \ell \le k+1$, it follows that $\ell = k$.
Then, $f_0 = n+3k$, $f_1= (4n+6k+4k)/2= 2n+5k$ and
$f_2= n_3 + n_4 +n_6 = (n+3k)/3 + (2n+4k)/4+ n/6= n+2k$. Therefore, $f_0-f_1+f_2 = 0$. This is not possible since the Euler characteristic of $\mathbb{D}^2$ is 1. This completes the proof.
\end{proof}

%\newpage

\begin{proof}[Proof of Lemma \ref{lemma:E8}]
Let $a_{0,0}b_{0,0}$ be an edge of a $6$-gon $A_{0,0}$. Then $a_{0,0}b_{0,0}$ is also in a $4$-gon $B_{0,0}$.  Let the face-cycle $C(b_{0,0})$ at $b_{0,0}$ be $C_4(A_{0,0}, B_{0,0}, C_{0,0}, D_{0,0})$, where $A_{0,0} =$
$C_6(b_{0,0}, a_{0,0}, c_{0,1}, b_{0,1}, a_{0,1}$, $c_{0,0})$, $B_{0,0} =$ $C_4(b_{0,0}$, $a_{0,0}$, $b_{0,-1}$, $a_{0,-1})$, $C_{0,0} =$ $C_3(b_{0,0}$, $a_{0,-1}$, $c_{1,-1})$, $D_{0,0} =$ $C_4(b_{0,0}$, $c_{1,-1}$, $b_{1,-1}$, $c_{0,0})$,
for some vertices $c_{0,1}$, $b_{0,1}$, $a_{0,1}$, $c_{0,0}$, $b_{1,-1}$, $c_{1,-1}$, $a_{0,-1}$, $b_{0,-1}$, $c_{-1,0}$, $a_{-1,1}$ (see Fig. 3).
Then the other two faces through $c_{0,0}$ must be of the form $E_{0,0} := C_3(c_{0,0}$, $b_{1,-1}$, $a_{1,0})$ and $F_{0,0} := C_4(c_{0,0}, a_{1,0}, c_{1,1}, a_{0,1})$.
This implies that the faces through $a_{1,0}$ are $E_{0,0}$, $F_{0,0}$, $A_{1,0} := C_6(a_{1,0}, c_{1,1}, b_{1,1}, a_{1,1}, c_{1,0}, b_{1,0})$, $B_{1,0} :=$ $C_4(a_{1,0}$, $b_{1,0}$, $a_{1,-1}$, $b_{1,-1})$.
Then the other two faces through $b_{1,0}$ must be of the form $C_{1,0} :=$ $C_3(b_{1,0}, a_{1,-1}, c_{2,-1})$, $D_{1,0} :=$ $C_4(b_{1,0}, c_{2,-1}, b_{2,-1}, c_{1,0})$.
Continuing this way, we get the paths $P_0 := \cdots\mbox{-} b_{-1,0}\mbox{-} c_{-1,0}\mbox{-}a_{0,0}\mbox{-}b_{0,0}\mbox{-}c_{0,0}\mbox{-}a_{1,0}\mbox{-}b_{1,0} \mbox{-}c_{1,0}\mbox{-} a_{2,0}\mbox{-} \cdots$, $Q_0 := \cdots\mbox{-} c_{0,-1} \mbox{-}b_{0,-1}\mbox{-}$ $a_{0,-1}\mbox{-}c_{1,-1}\mbox{-}b_{1,-1}\mbox{-} a_{1,-1}\mbox{-}c_{2,-1}\mbox{-}b_{2,-1} \mbox{-} \cdots$,  $Q_1 := \cdots \mbox{-} b_{-1,1}\mbox{-} a_{-1,1}\mbox{-}c_{0,1}\mbox{-}b_{0,1}\mbox{-}a_{0,1}\mbox{-}c_{1,1}\mbox{-}b_{1,1}\mbox{-} a_{1,1}\mbox{-} \cdots$ and the faces $B_{i,0}:=C_4(a_{i,0}, b_{i,0}, a_{i,-1}, b_{i,-1})$, $C_{i,0}:=C_3(b_{i,0}, c_{i+1,-1}, a_{i,-1})$, $D_{i,0}:=$ $C_4(b_{i,0}$, $c_{i,0}$, $b_{i+1,-1}$, $c_{i+1,-1})$, $E_{0,0}:=C_3(c_{i,0}, a_{i+1,0}, b_{i+1,-1})$, $i \in \mathbb{Z}$, between $Q_0$ and $P_0$ and faces $A_{i,0}:=C_6(a_{i,0}, b_{i,0}, c_{i,0}, a_{i,1}, b_{i,1}, c_{i,1})$, $F_{i,0}:=C_4(c_{i,0},$ $a_{i+1,0}, c_{i+1,1}, a_{i,1})$, $i \in \mathbb{Z}$, between $P_0$ and $Q_1$ (see Fig. 3).

%%%%%%%%%%%%%%%%Figure 3%%%%%%%%%%%%%%%%%%%%%%%%%%%%

\begin{figure}[ht]
\tiny
\tikzstyle{ver}=[]
\tikzstyle{vert}=[circle, draw, fill=black!100, inner sep=0pt, minimum width=4pt]
\tikzstyle{vertex}=[circle, draw, fill=black!00, inner sep=0pt, minimum width=4pt]
\tikzstyle{edge} = [draw,thick,-]
\centering

\begin{tikzpicture}[scale=0.45]

\draw ({sqrt(3)}, 1) -- (0, 2) -- ({-sqrt(3)}, 1);% -- ({-sqrt(3)}, -1);% -- (0, -2) -- ({sqrt(3)}, -1) -- ({sqrt(3)}, 1);
\draw  ({6+sqrt(3)}, 1) -- (6+0, 2) -- ({6-sqrt(3)}, 1);% -- ({6-sqrt(3)}, -1) -- (6+0, -2) -- ({6+sqrt(3)}, -1) -- ({6+sqrt(3)}, 1);
\draw ({12+sqrt(3)}, 1) -- (12+0, 2) -- ({12-sqrt(3)}, 1);% -- ({12-sqrt(3)}, -1) -- (12+0, -2) -- ({12+sqrt(3)}, -1) -- ({12+sqrt(3)}, 1);

\draw [fill = lightgray] ({-4.7+sqrt(3)}, 2.55) -- ({-sqrt(3)}, 1) -- ({-4.7+sqrt(3)}, 1);
\draw [fill = lightgray] ({-4.7+sqrt(3)}, 4) -- ({-4.7+sqrt(3)}, 2.55) -- ({-sqrt(3)}, 1) -- (0, 2) --  ({sqrt(3)}, 1) -- ({6-sqrt(3)}, 1) -- (6+0, 2) -- ({6+sqrt(3)}, 1) -- ({12-sqrt(3)}, 1) --(12+0, 2) -- ({12+sqrt(3)}, 1) -- ({16.7-sqrt(3)}, 2.55) -- ({16.7-sqrt(3)}, 4);
\draw [fill = lightgray] ({16.7-sqrt(3)}, 2.55)  -- ({12+sqrt(3)}, 1) -- ({16.7-sqrt(3)}, 1);

\draw ({-4.8+sqrt(3)}, 1) -- ({-sqrt(3)}, 1);
%\draw ({-4.8+sqrt(3)}, -1) -- ({-sqrt(3)}, -1);
\draw ({sqrt(3)}, 1) -- ({6-sqrt(3)}, 1);
%\draw ({sqrt(3)}, -1) -- ({6-sqrt(3)}, -1);
\draw ({6+sqrt(3)}, 1) -- ({12-sqrt(3)}, 1);
%\draw ({6+sqrt(3)}, -1) -- ({12-sqrt(3)}, -1);
\draw ({12+sqrt(3)}, 1) -- ({16.8-sqrt(3)}, 1);
%\draw ({12+sqrt(3)}, -1) -- ({16.8-sqrt(3)}, -1);

\draw ({-sqrt(3)}, 1) -- (-3+0, 2.6);
\draw (0, 2) -- (-1.25, 3.6);
\draw (0, 2) -- (1.3, 3.6);
\draw ({sqrt(3)}, 1) -- (3.05, 2.6);

\draw ({6-sqrt(3)}, 1) -- (3+0, 2.6);
\draw (6, 2) -- (4.75, 3.6);
\draw (6, 2) -- (7.3, 3.6);
\draw ({6+sqrt(3)}, 1) -- (9.05, 2.6);

\draw ({12-sqrt(3)}, 1) -- (9, 2.6);
\draw (12, 2) -- (10.75, 3.6);
\draw (12, 2) -- (13.3, 3.6);
\draw ({12+sqrt(3)}, 1) -- (15.05, 2.6);

%%%%

\draw [fill=lightgray, xshift = 86, yshift = 130] (-6+0, -2) -- ({-6+sqrt(3)}, -1) -- ({-6+sqrt(3)}, 1) -- ({-6+sqrt(3)}, 1) -- (-6+0, 2);

\draw [fill=lightgray, xshift = 86, yshift = 130] ({sqrt(3)}, 1) -- (0, 2) -- ({-sqrt(3)}, 1) -- ({-sqrt(3)}, -1) -- (0, -2) -- ({sqrt(3)}, -1) -- ({sqrt(3)}, 1);

\draw [fill=lightgray, xshift = 86, yshift = 130] ({6+sqrt(3)}, 1) -- (6+0, 2) -- ({6-sqrt(3)}, 1) -- ({6-sqrt(3)}, -1) -- (6+0, -2) -- ({6+sqrt(3)}, -1) -- ({6+sqrt(3)}, 1);

\draw [fill=lightgray,xshift = 86, yshift = 130] (12+0, 2) -- ({12-sqrt(3)}, 1) -- ({12-sqrt(3)}, -1) -- (12+0, -2) ;

\draw [fill=lightgray, xshift = 86, yshift = 130] ({-6+sqrt(3)}, 1) -- ({-sqrt(3)}, 1)  -- ({-sqrt(3)}, -1)-- ({-6+sqrt(3)}, -1);

\draw [fill=lightgray, xshift = 86, yshift = 130] ({sqrt(3)}, 1) -- ({6-sqrt(3)}, 1) -- ({6-sqrt(3)}, -1) -- ({sqrt(3)}, -1);

\draw [fill=lightgray, xshift = 86, yshift = 130] ({6+sqrt(3)}, 1) -- ({12-sqrt(3)}, 1) -- ({12-sqrt(3)}, -1) -- ({6+sqrt(3)}, -1);

%%%%%%%%%%%%%%%%%%

\draw [line width=0.3mm, xshift = 86, yshift = 130] ({-6+sqrt(3)}, -1) -- ({-6+sqrt(3)}, 1);
\draw [line width=0.1mm, xshift = 86, yshift = 130] ({-sqrt(3)}, -1) -- ({-sqrt(3)}, 1);
\draw [line width=0.3mm, xshift = 86, yshift = 130] ({sqrt(3)}, -1) -- ({sqrt(3)}, 1);
\draw [line width=0.1mm, xshift = 86, yshift = 130] ({6-sqrt(3)}, -1) -- ({6-sqrt(3)}, 1);
\draw [line width=0.3mm, xshift = 86, yshift = 130] ({6+sqrt(3)}, -1) -- ({6+sqrt(3)}, 1);
\draw [line width=0.1mm, xshift = 86, yshift = 130] ({12-sqrt(3)}, -1) -- ({12-sqrt(3)}, 1);

%%%%%%%%%%%%%%%%%%

\draw [yshift = 260] ({-sqrt(3)}, -1) -- (0, -2) -- ({sqrt(3)}, -1);% -- ({sqrt(3)}, 1);
\draw [yshift = 260] ({6-sqrt(3)}, -1) -- (6+0, -2) -- ({6+sqrt(3)}, -1);% -- ({6+sqrt(3)}, 1);
\draw [yshift = 260] ({12-sqrt(3)}, -1) -- (12+0, -2) -- ({12+sqrt(3)}, -1);% -- ({12+sqrt(3)}, 1);

%\draw [yshift = 260] ({-4.8+sqrt(3)}, 1) -- ({-sqrt(3)}, 1);
\draw [yshift = 260] ({-4.8+sqrt(3)}, -1) -- ({-sqrt(3)}, -1);
%\draw [yshift = 260] ({sqrt(3)}, 1) -- ({6-sqrt(3)}, 1);
\draw [yshift = 260] ({sqrt(3)}, -1) -- ({6-sqrt(3)}, -1);
%\draw [yshift = 260] ({6+sqrt(3)}, 1) -- ({12-sqrt(3)}, 1);
\draw [yshift = 260] ({6+sqrt(3)}, -1) -- ({12-sqrt(3)}, -1);
%\draw [yshift = 260] ({12+sqrt(3)}, 1) -- ({16.8-sqrt(3)}, 1);
\draw [yshift = 260] ({12+sqrt(3)}, -1) -- ({16.8-sqrt(3)}, -1);

%%%%
\draw [xshift = 86, yshift = 130] ({-sqrt(3)}, 1) -- (-3+0, 2.6);
\draw [xshift = 86, yshift = 130](0, 2) -- (-1.25, 3.6);
\draw [xshift = 86, yshift = 130](0, 2) -- (1.3, 3.6);
\draw [xshift = 86, yshift = 130]({sqrt(3)}, 1) -- (3.05, 2.6);

\draw [xshift = 86, yshift = 130]({6-sqrt(3)}, 1) -- (3+0, 2.6);
\draw [xshift = 86, yshift = 130](6, 2) -- (4.75, 3.6);
\draw [xshift = 86, yshift = 130](6, 2) -- (7.3, 3.6);
\draw [xshift = 86, yshift = 130]({6+sqrt(3)}, 1) -- (9.05, 2.6);

\draw [xshift = 86, yshift = 130]({12-sqrt(3)}, 1) -- (9, 2.6);
\draw [xshift = 86, yshift = 130](12, 2) -- (10.75, 3.6);
\draw [yshift = 130](-3, 2) -- (-1.7, 3.6);
\draw [yshift = 130]({-3+sqrt(3)}, 1) -- (.05, 2.6);

%%%%%%%%%%%%%%%%%%

\draw [yshift = 260] ({sqrt(3)}, 1) -- (0, 2) -- ({-sqrt(3)}, 1) -- ({-sqrt(3)}, -1) -- (0, -2) -- ({sqrt(3)}, -1) -- ({sqrt(3)}, 1);
\draw [yshift = 260] ({6+sqrt(3)}, 1) -- (6+0, 2) -- ({6-sqrt(3)}, 1) -- ({6-sqrt(3)}, -1) -- (6+0, -2) -- ({6+sqrt(3)}, -1) -- ({6+sqrt(3)}, 1);
\draw [yshift = 260]({12+sqrt(3)}, 1) -- (12+0, 2) -- ({12-sqrt(3)}, 1) -- ({12-sqrt(3)}, -1) -- (12+0, -2) -- ({12+sqrt(3)}, -1) -- ({12+sqrt(3)}, 1);

\draw [yshift = 260] ({-4.8+sqrt(3)}, 1) -- ({-sqrt(3)}, 1);
\draw [yshift = 260] ({-4.8+sqrt(3)}, -1) -- ({-sqrt(3)}, -1);
\draw [yshift = 260] ({sqrt(3)}, 1) -- ({6-sqrt(3)}, 1);
\draw [yshift = 260] ({sqrt(3)}, -1) -- ({6-sqrt(3)}, -1);
\draw [yshift = 260] ({6+sqrt(3)}, 1) -- ({12-sqrt(3)}, 1);
\draw [yshift = 260] ({6+sqrt(3)}, -1) -- ({12-sqrt(3)}, -1);
\draw [yshift = 260] ({12+sqrt(3)}, 1) -- ({16.8-sqrt(3)}, 1);
\draw [yshift = 260] ({12+sqrt(3)}, -1) -- ({16.8-sqrt(3)}, -1);

%%%%%%%%%%%%%%%

%\node[ver] () at (-1,-.8){$u_{0,-2}$};
%\node[ver] () at (0,-1.5){$v_{0,-2}$};
%\node[ver] () at (1,-.8){$w_{0,-2}$};
\node[ver] () at (1,.6){$a_{0,-1}$};
\node[ver] () at (0,1.3){$b_{0,-1}$};
\node[ver] () at (-1,.6){$c_{0,-1}$};

%\node[ver] () at (-1+6,-.8){$a_{1,-2}$};
%\node[ver] () at (0+6,-1.5){$v_{1,-2}$};
%\node[ver] () at (1+6,-.8){$w_{1,-2}$};
\node[ver] () at (1+6,.6){$a_{1,-1}$};
\node[ver] () at (0+6,1.3){$b_{1,-1}$};
\node[ver] () at (-1+6,.6){$c_{1,-1}$};

%\node[ver] () at (-1+12,-.8){$a_{2,-2}$};
%\node[ver] () at (0+12,-1.5){$v_{2,-2}$};
%\node[ver] () at (1+12,-.8){$w_{2,-2}$};
\node[ver] () at (1+12,.6){$a_{2,-1}$};
\node[ver] () at (0+12,1.3){$b_{2,-1}$};
\node[ver] () at (-1+12,.6){$c_{2,-1}$};
%%%%%%
%\node[ver] () at (-.8-3,-.8+4.5){$a_{0,0}$};
\node[ver] () at (0-3,-1.5+4.6){$b_{-1,0}$};
\node[ver] () at (.7-3,-.8+4.5){$c_{-1,0}$};
\node[ver] () at (.7-3,.6+4.5){$a_{-1,1}$};
\node[ver] () at (0-3,1.3+4.6){$b_{-1,1}$};
%\node[ver] () at (-.8-3,.6+4.5){$w_{0,1}$};

\node[ver] () at (-1+3,-.8+4.5){$a_{0,0}$};
\node[ver] () at (0+3,-1.5+4.6){$b_{0,0}$};
\node[ver] () at (1+3,-.8+4.5){$c_{0,0}$};
\node[ver] () at (1+3,.6+4.5){$a_{0,1}$};
\node[ver] () at (0+3,1.3+4.6){$b_{0,1}$};
\node[ver] () at (-1+3,.6+4.5){$c_{0,1}$};

\node[ver] () at (-1+9,-.8+4.5){$a_{1,0}$};
\node[ver] () at (0+9,-1.5+4.6){$b_{1,0}$};
\node[ver] () at (1+9,-.8+4.5){$c_{1,0}$};
\node[ver] () at (1+9,.6+4.5){$a_{1,1}$};
\node[ver] () at (0+9,1.3+4.6){$b_{1,1}$};
\node[ver] () at (-1+9,.6+4.5){$c_{1,1}$};

\node[ver] () at (-1+15,-.8+4.5){$a_{2,0}$};
\node[ver] () at (0+15,-1.5+4.6){$b_{2,0}$};
%\node[ver] () at (.9+15,-.8+4.5){$w_{2,0}$};
%\node[ver] () at (.9+15,.6+4.5){$a_{2,1}$};
\node[ver] () at (0+15,1.3+4.6){$b_{2,1}$};
\node[ver] () at (-1+15,.6+4.5){$c_{2,1}$};
%%%%%
\node[ver] () at (-1,-.8+9){$a_{-1,2}$};
\node[ver] () at (0,-1.5+9.1){$b_{-1,2}$};
\node[ver] () at (1,-.8+9){$c_{-1,2}$};
\node[ver] () at (1,.6+9){$a_{-1,3}$};
\node[ver] () at (0,1.3+9.1){$b_{-1,3}$};
\node[ver] () at (-1,.6+9){$c_{-1,3}$};

\node[ver] () at (-1+6,-.8+9){$a_{0,2}$};
\node[ver] () at (0+6,-1.5+9.1){$b_{0,2}$};
\node[ver] () at (1+6,-.8+9){$c_{0,2}$};
\node[ver] () at (1+6,.6+9){$a_{0,3}$};
\node[ver] () at (0+6,1.3+9.1){$b_{0,3}$};
\node[ver] () at (-1+6,.6+9){$c_{0,3}$};

\node[ver] () at (-1+12,-.8+9){$a_{1,2}$};
\node[ver] () at (0+12,-1.5+9.1){$b_{1,2}$};
\node[ver] () at (1+12,-.8+9){$c_{1,2}$};
\node[ver] () at (1+12,.6+9){$a_{1,3}$};
\node[ver] () at (0+12,1.3+9.1){$b_{1,3}$};
\node[ver] () at (-1+12,.6+9){$c_{1,3}$};

\node[ver] () at (16,10){$Q_2$};
\node[ver] () at (16,8){$P_1$};
\node[ver] () at (16,6.5){$Q_1$};
\node[ver] () at (16,2.5){$P_0$};
\node[ver] () at (16,1){$Q_0$};

\node[ver] () at (1.5,2.4){$B_{0,0}$};
\node[ver] () at (4.5,2.4){$D_{0,0}$};
\node[ver] () at (7.5,2.4){$B_{1,0}$};
\node[ver] () at (10.5,2.4){$D_{1,0}$};

\node[ver] () at (3,1.5){$C_{0,0}$};
\node[ver] () at (9,1.5){$C_{1,0}$};

\node[ver] () at (0,4.5){$F_{-1,0}$};
\node[ver] () at (3,4.5){$A_{0,0}$};
\node[ver] () at (6,4.5){$F_{0,0}$};
\node[ver] () at (9,4.5){$A_{1,0}$};
\node[ver] () at (12,4.5){$F_{1,0}$};

\node[ver] () at (6,3){$E_{0,0}$};
\node[ver] () at (12,3){$E_{1,0}$};

\node[ver] () at (1.5,6.7){$D_{-1,1}$};
\node[ver] () at (3,7.7){$E_{-1,1}$};
\node[ver] () at (4.5,6.7){$B_{0,1}$};
\node[ver] () at (6,6){$C_{0,1}$};
\node[ver] () at (7.5,6.7){$D_{0,1}$};
\node[ver] () at (9,7.7){$E_{0,1}$};
\node[ver] () at (10.5,6.7){$B_{1,1}$};
\node[ver] () at (12,6){$C_{1,1}$};

\node[ver] () at (5.2, -1){\normalsize \bf Figure 3: Part of {\boldmath $X$} of Lemma \ref{lemma:E8}};

\end{tikzpicture}

\end{figure}

%%%%%%%%%%%%%%%%%%%%%%%%%%%%%%%%%%%%%%%

%\medskip

\noindent {\bf Claim.} $P_0$, $Q_0$, $Q_1$ are infinite paths.

\smallskip

If not then we get a cycle  (as a subgraph of $P_0$). This gives a map $M$ on the disc $\mathbb{D}^2$ in which each internal vertex is in one 6-gon, two 4-gons, one 3-gon and the boundary vertices are either (i) on  4-gons and 6-gons or (ii) on 3-gons and 4-gons. In the first case, $M$ would be like one in Fig. 4. But, this is not possible by Lemma \ref{lemma:disc3464-1}.
In the second case, $M$ would be like one in Fig. 5. This is also not possible by Lemma \ref{lemma:disc3464-2}. Thus, $P_0$ is an infinite path. Similarly, $Q_0$ and $Q_1$ are infinite paths. This proves the claim.

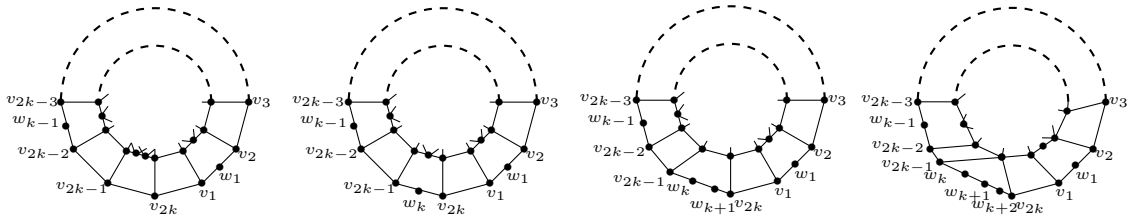
\begin{figure}[ht]
\tiny
\tikzstyle{ver}=[]
\tikzstyle{vert}=[circle, draw, fill=black!100, inner sep=0pt, minimum width=4pt]
\tikzstyle{vertex}=[circle, draw, fill=black!00, inner sep=0pt, minimum width=4pt]
\tikzstyle{edge} = [draw,thick,-]
\centering

\begin{tikzpicture}[scale=0.25]

%\draw[red,thick,dashed] (2,2) circle (3cm);
\draw [thick,dashed] (5,0) arc (0:180:5cm);\draw [thick,dashed] (3,0) arc (0:180:3cm);
%\draw (-3, 0) -- ({-2.5*sqrt(2)}, {-2.5*sqrt(2)}) -- (0, -5) -- ({2.5*sqrt(2)}, {-2.5*sqrt(2)}) -- (3, 0);

\draw (180:5 cm) -- (210:5 cm) -- (240:5 cm) -- (270:5 cm) -- (300:5 cm)-- (330:5 cm)-- (0:5 cm);

\draw (180:3 cm) -- (210:3 cm) -- (240:3 cm) -- (270:3 cm) -- (300:3 cm)-- (330:3 cm)-- (0:3 cm);

\draw (180:5 cm) -- (180:3 cm);
\draw (210:5 cm) -- (210:3 cm);
\draw (240:5 cm) -- (240:3 cm);
\draw (270:5 cm) -- (270:3 cm);
\draw (300:5 cm) -- (300:3 cm);
\draw (330:5 cm) -- (330:3 cm);
\draw (0:5 cm) -- (0:3 cm);

\draw (180:3 cm) -- (170:2.5 cm);%\draw (180:3 cm) -- (190:2.5 cm);
\draw (195:3 cm) -- (185:2.5 cm);\draw (195:3 cm) -- (205:2.5 cm);
\draw (210:3 cm) -- (210:2.5 cm);%\draw (210:3 cm) -- (220:2.5 cm);
\draw (240:3 cm) -- (240:2.5 cm);
\draw (270:3 cm) -- (270:2.5 cm);
\draw (300:3 cm) -- (300:2.5 cm);
\draw (315:3 cm) -- (305:2.5 cm);\draw (315:3 cm) -- (325:2.5 cm);
\draw (330:3 cm) -- (330:2.5 cm);
\draw (360:3 cm) -- (360:2.5 cm);
\draw (260:3 cm) -- (250:2.5 cm);\draw (260:3 cm) -- (270:2.5 cm);
\draw (250:3 cm) -- (260:2.5 cm);\draw (250:3 cm) -- (240:2.5 cm);

\node[ver] () at (180:5 cm){$\bullet$};
\node[ver] () at (180:3 cm){$\bullet$};
\node[ver] () at (210:5 cm){$\bullet$};
\node[ver] () at (210:3 cm){$\bullet$};
\node[ver] () at (240:5 cm){$\bullet$};
\node[ver] () at (240:3 cm){$\bullet$};
\node[ver] () at (270:5 cm){$\bullet$};
\node[ver] () at (270:3 cm){$\bullet$};
\node[ver] () at (300:5 cm){$\bullet$};
\node[ver] () at (300:3 cm){$\bullet$};
\node[ver] () at (330:5 cm){$\bullet$};
\node[ver] () at (330:3 cm){$\bullet$};
\node[ver] () at (0:5 cm){$\bullet$};
\node[ver] () at (0:3 cm){$\bullet$};

\node[ver] () at (195:4.9 cm){$\bullet$};
\node[ver] () at (195:2.9 cm){$\bullet$};

\node[ver] () at (250:2.9 cm){$\bullet$};
\node[ver] () at (260:2.9 cm){$\bullet$};

\node[ver] () at (315:4.9 cm){$\bullet$};
\node[ver] () at (315:2.9 cm){$\bullet$};

\node[ver] () at (190:6.4 cm){$w_{k-1}$};
%\node[ver] () at (245:5.5 cm){$w_{k-1}$};
%\node[ver] () at (261:5.7 cm){$w_{k}$};
\node[ver] () at (315:5.7 cm){$w_{1}$};

\node[ver] () at (180:6.5 cm){$v_{2k-3}$};
\node[ver] () at (205:6.5 cm){$v_{2k-2}$};
\node[ver] () at (230:6 cm){$v_{2k-1}$};
\node[ver] () at (275:5.7 cm){$v_{2k}$};
\node[ver] () at (300:5.7 cm){$v_{1}$};
\node[ver] () at (330:5.7 cm){$v_{2}$};
\node[ver] () at (0:5.7 cm){$v_{3}$};

%\node[ver] () at (193:4 cm){$A$};\node[ver] () at (223:4 cm){$B$};
%\node[ver] () at (253:4 cm){$A$};\node[ver] () at (283:4 cm){$F$};
%\node[ver] () at (313:4 cm){$A$};\node[ver] () at (343:4 cm){$F$};

\end{tikzpicture}
%\hspace{.2cm}
\begin{tikzpicture}[scale=0.25]

%\draw[red,thick,dashed] (2,2) circle (3cm);
\draw [thick,dashed] (5,0) arc (0:180:5cm);\draw [thick,dashed] (3,0) arc (0:180:3cm);
%\draw (-3, 0) -- ({-2.5*sqrt(2)}, {-2.5*sqrt(2)}) -- (0, -5) -- ({2.5*sqrt(2)}, {-2.5*sqrt(2)}) -- (3, 0);

\draw (180:5 cm) -- (210:5 cm) -- (240:5 cm) -- (270:5 cm) -- (300:5 cm)-- (330:5 cm)-- (0:5 cm);

\draw (180:3 cm) -- (210:3 cm) -- (240:3 cm) -- (270:3 cm) -- (300:3 cm)-- (330:3 cm)-- (0:3 cm);

\draw (180:5 cm) -- (180:3 cm);
\draw (210:5 cm) -- (210:3 cm);
\draw (240:5 cm) -- (240:3 cm);
\draw (270:5 cm) -- (270:3 cm);
\draw (300:5 cm) -- (300:3 cm);
\draw (330:5 cm) -- (330:3 cm);
\draw (0:5 cm) -- (0:3 cm);

\draw (180:3 cm) -- (170:2.5 cm);%\draw (180:3 cm) -- (190:2.5 cm);
\draw (195:3 cm) -- (185:2.5 cm);\draw (195:3 cm) -- (205:2.5 cm);
\draw (210:3 cm) -- (210:2.5 cm);%\draw (210:3 cm) -- (220:2.5 cm);
\draw (240:3 cm) -- (240:2.5 cm);
\draw (270:3 cm) -- (270:2.5 cm);
\draw (300:3 cm) -- (300:2.5 cm);
\draw (315:3 cm) -- (305:2.5 cm);\draw (315:3 cm) -- (325:2.5 cm);
\draw (330:3 cm) -- (330:2.5 cm);
\draw (360:3 cm) -- (360:2.5 cm);
\draw (255:3 cm) -- (265:2.5 cm);\draw (255:3 cm) -- (245:2.5 cm);

\node[ver] () at (180:5 cm){$\bullet$};
\node[ver] () at (180:3 cm){$\bullet$};
\node[ver] () at (210:5 cm){$\bullet$};
\node[ver] () at (210:3 cm){$\bullet$};
\node[ver] () at (240:5 cm){$\bullet$};
\node[ver] () at (240:3 cm){$\bullet$};
\node[ver] () at (270:5 cm){$\bullet$};
\node[ver] () at (270:3 cm){$\bullet$};
\node[ver] () at (300:5 cm){$\bullet$};
\node[ver] () at (300:3 cm){$\bullet$};
\node[ver] () at (330:5 cm){$\bullet$};
\node[ver] () at (330:3 cm){$\bullet$};
\node[ver] () at (0:5 cm){$\bullet$};
\node[ver] () at (0:3 cm){$\bullet$};

\node[ver] () at (195:4.9 cm){$\bullet$};
\node[ver] () at (195:2.9 cm){$\bullet$};

\node[ver] () at (255:2.9 cm){$\bullet$};
\node[ver] () at (255:4.9 cm){$\bullet$};

\node[ver] () at (315:4.9 cm){$\bullet$};
\node[ver] () at (315:2.9 cm){$\bullet$};

\node[ver] () at (190:6.4 cm){$w_{k-1}$};
\node[ver] () at (255:5.7 cm){$w_{k}$};
%\node[ver] () at (261:5.7 cm){$w_{k}$};
\node[ver] () at (315:5.7 cm){$w_{1}$};

\node[ver] () at (180:6.6 cm){$v_{2k-3}$};
\node[ver] () at (205:6.5 cm){$v_{2k-2}$};
\node[ver] () at (230:6 cm){$v_{2k-1}$};
\node[ver] () at (275:5.7 cm){$v_{2k}$};
\node[ver] () at (300:5.7 cm){$v_{1}$};
\node[ver] () at (330:5.7 cm){$v_{2}$};
\node[ver] () at (0:5.7 cm){$v_{3}$};

%\node[ver] () at (193:4 cm){$A$};\node[ver] () at (223:4 cm){$F$};
%\node[ver] () at (253:4 cm){$A$};\node[ver] () at (283:4 cm){$F$};
%\node[ver] () at (313:4 cm){$A$};\node[ver] () at (343:4 cm){$F$};

\end{tikzpicture}
\begin{tikzpicture}[scale=0.25]

%\draw[red,thick,dashed] (2,2) circle (3cm);
\draw [thick,dashed] (5,0) arc (0:180:5cm);\draw [thick,dashed] (3,0) arc (0:180:3cm);
%\draw (-3, 0) -- ({-2.5*sqrt(2)}, {-2.5*sqrt(2)}) -- (0, -5) -- ({2.5*sqrt(2)}, {-2.5*sqrt(2)}) -- (3, 0);

\draw (180:5 cm) -- (210:5 cm) -- (230:5 cm) -- (270:5 cm) -- (300:5 cm)-- (330:5 cm)-- (0:5 cm);

\draw (180:3 cm) -- (210:3 cm) -- (240:3 cm) -- (270:3 cm) -- (300:3 cm)-- (330:3 cm)-- (0:3 cm);

\draw (180:5 cm) -- (180:3 cm);
\draw (210:5 cm) -- (210:3 cm);
\draw (230:5 cm) -- (240:3 cm);
\draw (270:5 cm) -- (270:3 cm);
\draw (300:5 cm) -- (300:3 cm);
\draw (330:5 cm) -- (330:3 cm);
\draw (0:5 cm) -- (0:3 cm);

\node[ver] () at (180:5 cm){$\bullet$};
\node[ver] () at (180:3 cm){$\bullet$};
\node[ver] () at (210:5 cm){$\bullet$};
\node[ver] () at (210:3 cm){$\bullet$};
\node[ver] () at (230:5 cm){$\bullet$};
\node[ver] () at (240:3 cm){$\bullet$};
\node[ver] () at (270:5 cm){$\bullet$};
\node[ver] () at (270:3 cm){$\bullet$};
\node[ver] () at (300:5 cm){$\bullet$};
\node[ver] () at (300:3 cm){$\bullet$};
\node[ver] () at (330:5 cm){$\bullet$};
\node[ver] () at (330:3 cm){$\bullet$};
\node[ver] () at (0:5 cm){$\bullet$};
\node[ver] () at (0:3 cm){$\bullet$};

\node[ver] () at (195:4.8 cm){$\bullet$};
\node[ver] () at (195:2.9 cm){$\bullet$};
\node[ver] () at (245:4.75 cm){$\bullet$};
\node[ver] () at (260:4.8 cm){$\bullet$};
\node[ver] () at (315:4.85 cm){$\bullet$};
\node[ver] () at (315:2.9 cm){$\bullet$};

\draw (180:3 cm) -- (170:2.5 cm);%\draw (180:3 cm) -- (190:2.5 cm);
\draw (195:3 cm) -- (185:2.5 cm);\draw (195:3 cm) -- (205:2.5 cm);
\draw (210:3 cm) -- (210:2.5 cm);%\draw (210:3 cm) -- (220:2.5 cm);
\draw (240:3 cm) -- (240:2.5 cm);
\draw (270:3 cm) -- (270:2.5 cm);
\draw (300:3 cm) -- (300:2.5 cm);
\draw (315:3 cm) -- (305:2.5 cm);\draw (315:3 cm) -- (325:2.5 cm);
\draw (330:3 cm) -- (330:2.5 cm);
\draw (360:3 cm) -- (360:2.5 cm);

\node[ver] () at (190:6.4 cm){$w_{k-1}$};
\node[ver] () at (240:5.5 cm){$w_{k}$};
\node[ver] () at (261:5.8 cm){$w_{k+1}$};
\node[ver] () at (315:5.7 cm){$w_{1}$};

\node[ver] () at (180:6.6 cm){$v_{2k-3}$};
\node[ver] () at (205:6.5 cm){$v_{2k-2}$};
\node[ver] () at (222:6.5 cm){$v_{2k-1}$};
\node[ver] () at (280:5.7 cm){$v_{2k}$};
\node[ver] () at (300:5.7 cm){$v_{1}$};
\node[ver] () at (330:5.7 cm){$v_{2}$};
\node[ver] () at (0:5.7 cm){$v_{3}$};

%\node[ver] () at (193:4 cm){$A$};\node[ver] () at (223:4 cm){$D$};
%\node[ver] () at (253:4 cm){$A$};\node[ver] () at (283:4 cm){$F$};
%\node[ver] () at (313:4 cm){$A$};\node[ver] () at (343:4 cm){$F$};

\end{tikzpicture}%\hspace{.2cm}
\begin{tikzpicture}[scale=0.25]

%\draw[red,thick,dashed] (2,2) circle (3cm);
\draw [thick,dashed] (5,0) arc (0:180:5cm);\draw [thick,dashed] (3,0) arc (0:180:3cm);
%\draw (-3, 0) -- ({-2.5*sqrt(2)}, {-2.5*sqrt(2)}) -- (0, -5) -- ({2.5*sqrt(2)}, {-2.5*sqrt(2)}) -- (3, 0);

\draw (180:5 cm) -- (210:5 cm) -- (220:5 cm) -- (270:5 cm) -- (300:5 cm)-- (330:5 cm)-- (0:5 cm);

\draw (180:3 cm) -- (230:3 cm) -- (260:3 cm) -- (290:3 cm) -- (320:3 cm)-- (350:3 cm)-- (0:3 cm);

\draw (180:5 cm) -- (180:3 cm);
\draw (210:5 cm) -- (230:3 cm);
\draw (220:5 cm) -- (260:3 cm);
\draw (270:5 cm) -- (260:3 cm);
\draw (300:5 cm) -- (290:3 cm);
\draw (330:5 cm) -- (320:3 cm);
\draw (0:5 cm) -- (350:3 cm);

\draw (180:3 cm) -- (170:2.5 cm);%\draw (180:3 cm) -- (190:2.5 cm);
\draw (205:3 cm) -- (195:2.5 cm);\draw (205:3 cm) -- (215:2.5 cm);
\draw (230:3 cm) -- (230:2.5 cm);%\draw (210:3 cm) -- (220:2.5 cm);
\draw (260:3 cm) -- (260:2.5 cm);
\draw (290:3 cm) -- (290:2.5 cm);
%\draw (300:3 cm) -- (300:2.5 cm);
\draw (320:3 cm) -- (310:2.5 cm);\draw (320:3 cm) -- (330:2.5 cm);
\draw (350:3 cm) -- (350:2.5 cm);
%\draw (360:3 cm) -- (360:2.5 cm);

\node[ver] () at (180:5 cm){$\bullet$};
\node[ver] () at (180:3 cm){$\bullet$};
\node[ver] () at (210:5 cm){$\bullet$};
\node[ver] () at (230:3 cm){$\bullet$};
\node[ver] () at (220:5 cm){$\bullet$};
\node[ver] () at (260:3 cm){$\bullet$};
\node[ver] () at (270:5 cm){$\bullet$};
\node[ver] () at (260:3 cm){$\bullet$};
\node[ver] () at (300:5 cm){$\bullet$};
\node[ver] () at (290:3 cm){$\bullet$};
\node[ver] () at (330:5 cm){$\bullet$};
\node[ver] () at (320:3 cm){$\bullet$};
\node[ver] () at (0:5 cm){$\bullet$};
\node[ver] () at (350:3 cm){$\bullet$};

\node[ver] () at (195:4.8 cm){$\bullet$};
\node[ver] () at (205:2.8 cm){$\bullet$};

\node[ver] () at (237.5:4.6 cm){$\bullet$};
\node[ver] () at (252:4.6 cm){$\bullet$};
\node[ver] () at (262.5:4.8 cm){$\bullet$};

\node[ver] () at (315:4.8 cm){$\bullet$};
\node[ver] () at (305:2.9 cm){$\bullet$};

%\node[ver] () at (345:4.9 cm){$\bullet$};
%\node[ver] () at (345:2.9 cm){$\bullet$};

\node[ver] () at (190:6.4 cm){$w_{k-1}$};
\node[ver] () at (225:5.7 cm){$w_{k}$};
\node[ver] () at (245:5.5 cm){$w_{k+1}$};
\node[ver] () at (261:5.7 cm){$w_{k+2}$};
\node[ver] () at (315:5.7 cm){$w_{1}$};

\node[ver] () at (180:6.6 cm){$v_{2k-3}$};
\node[ver] () at (202:6.5 cm){$v_{2k-2}$};
\node[ver] () at (212:6.5 cm){$v_{2k-1}$};
\node[ver] () at (280:5.7 cm){$v_{2k}$};
\node[ver] () at (300:5.7 cm){$v_{1}$};
\node[ver] () at (330:5.7 cm){$v_{2}$};
\node[ver] () at (0:5.7 cm){$v_{3}$};

%\node[ver] () at (193:4 cm){$A$};\node[ver] () at (223:4 cm){$B$};
%\node[ver] () at (250:4 cm){$A$};\node[ver] () at (283:4 cm){$F$};
%\node[ver] () at (313:4 cm){$A$};\node[ver] () at (343:4 cm){$F$};

\end{tikzpicture}%\hspace{.2cm}

\vspace{-2mm}
\caption*{{\bf Figure 4: Some maps on the 2-disc}}
\end{figure}

%%%%%%%%%%%%%%%%%%%%%%%%%%%%%%%%%%%%%%%%%%%%%%%%%%%%%%%

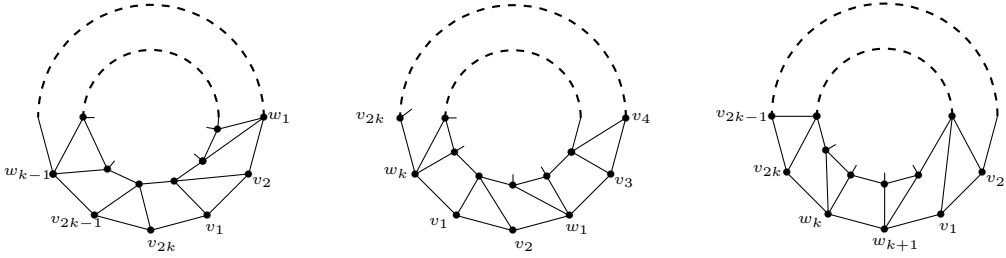
\begin{figure}[ht]
\tiny
\tikzstyle{ver}=[]
\tikzstyle{vert}=[circle, draw, fill=black!100, inner sep=0pt, minimum width=4pt]
\tikzstyle{vertex}=[circle, draw, fill=black!00, inner sep=0pt, minimum width=4pt]
\tikzstyle{edge} = [draw,thick,-]
\centering

\begin{tikzpicture}[scale=0.3]

%\draw[red,thick,dashed] (2,2) circle (3cm);
\draw [thick,dashed] (5,0) arc (0:180:5cm);\draw [thick,dashed] (3,0) arc (0:180:3cm);
%\draw (-3, 0) -- ({-2.5*sqrt(2)}, {-2.5*sqrt(2)}) -- (0, -5) -- ({2.5*sqrt(2)}, {-2.5*sqrt(2)}) -- (3, 0);

\draw (180:5 cm) -- (210:5 cm) -- (240:5 cm) -- (270:5 cm) -- (300:5 cm)-- (330:5 cm)-- (0:5 cm);

\draw (180:3 cm) -- (230:3 cm) -- (260:3 cm) -- (290:3 cm) -- (320:3 cm)-- (350:3 cm)-- (0:3 cm);

\draw (180:3 cm) -- (210:5 cm);
\draw (230:3 cm) -- (210:5 cm);
\draw (260:3 cm) -- (240:5 cm);
\draw (260:3 cm) -- (270:5 cm);
\draw (290:3 cm) -- (300:5 cm);
\draw (290:3 cm) -- (330:5 cm);
\draw (320:3 cm) -- (0:5 cm);
\draw (350:3 cm) -- (0:5 cm);

%\draw (180:3 cm) -- (170:2.5 cm);%\draw (180:3 cm) -- (190:2.5 cm);
%\draw (195:3 cm) -- (185:2.5 cm);\draw (195:3 cm) -- (205:2.5 cm);
\draw (230:3 cm) -- (230:2.5 cm);%\draw (210:3 cm) -- (220:2.5 cm);
\draw (180:3 cm) -- (180:2.5 cm);
\draw (320:3 cm) -- (320:2.5 cm);
\draw (350:3 cm) -- (350:2.5 cm);
%\draw (315:3 cm) -- (305:2.5 cm);\draw (315:3 cm) -- (325:2.5 cm);
%\draw (330:3 cm) -- (330:2.5 cm);
%\draw (360:3 cm) -- (360:2.5 cm);

\node[ver] () at (210:5 cm){$\bullet$};
\node[ver] () at (180:3 cm){$\bullet$};
\node[ver] () at (210:5 cm){$\bullet$};
\node[ver] () at (230:3 cm){$\bullet$};
\node[ver] () at (240:5 cm){$\bullet$};
\node[ver] () at (260:3 cm){$\bullet$};
\node[ver] () at (290:3 cm){$\bullet$};
\node[ver] () at (320:3 cm){$\bullet$};
\node[ver] () at (330:5 cm){$\bullet$};
\node[ver] () at (270:5 cm){$\bullet$};
\node[ver] () at (300:5 cm){$\bullet$};
\node[ver] () at (0:5 cm){$\bullet$};
\node[ver] () at (350:3 cm){$\bullet$};
\node[ver] () at (320:3 cm){$\bullet$};

%\node[ver] () at (180:6.3 cm){$u_{2k}$};
\node[ver] () at (205:6 cm){$w_{k-1}$};
\node[ver] () at (235:5.7 cm){$v_{2k-1}$};
\node[ver] () at (275:5.7 cm){$v_{2k}$};
\node[ver] () at (300:5.7 cm){$v_{1}$};
\node[ver] () at (330:5.7 cm){$v_{2}$};
\node[ver] () at (0:5.7 cm){$w_{1}$};

%\node[ver] () at (208:3.5 cm){$C$};\node[ver] () at (228:4 cm){$F$};
%\node[ver] () at (258:4 cm){$E$};\node[ver] () at (280:4 cm){$B$};
%\node[ver] () at (307:4 cm){$C$};\node[ver] () at (333:4 cm){$D$};
%\node[ver] () at (345:3.3 cm){$E$};

\end{tikzpicture}
\hspace{.5cm}
\begin{tikzpicture}[scale=0.3]

%\draw[red,thick,dashed] (2,2) circle (3cm);
\draw [thick,dashed] (5,0) arc (0:180:5cm);\draw [thick,dashed] (3,0) arc (0:180:3cm);
%\draw (-3, 0) -- ({-2.5*sqrt(2)}, {-2.5*sqrt(2)}) -- (0, -5) -- ({2.5*sqrt(2)}, {-2.5*sqrt(2)}) -- (3, 0);

\draw (180:5 cm) -- (210:5 cm) -- (240:5 cm) -- (270:5 cm) -- (300:5 cm)-- (330:5 cm)-- (0:5 cm);

\draw (180:3 cm) -- (210:3 cm) -- (240:3 cm) -- (270:3 cm) -- (300:3 cm)-- (330:3 cm)-- (0:3 cm);

\draw (210:5 cm) -- (180:3 cm);
\draw (210:5 cm) -- (210:3 cm);
\draw (240:5 cm) -- (240:3 cm);
\draw (270:5 cm) -- (240:3 cm);
\draw (300:5 cm) -- (270:3 cm);
\draw (300:5 cm) -- (300:3 cm);
\draw (330:5 cm) -- (330:3 cm);
\draw (0:5 cm) -- (330:3 cm);

%\draw (180:3 cm) -- (170:2.5 cm);%\draw (180:3 cm) -- (190:2.5 cm);
%\draw (195:3 cm) -- (185:2.5 cm);\draw (195:3 cm) -- (205:2.5 cm);
\draw (210:3 cm) -- (210:2.5 cm);%\draw (210:3 cm) -- (220:2.5 cm);
\draw (180:3 cm) -- (180:2.5 cm);
\draw (270:3 cm) -- (270:2.5 cm);
\draw (300:3 cm) -- (300:2.5 cm);
%\draw (315:3 cm) -- (305:2.5 cm);\draw (315:3 cm) -- (325:2.5 cm);
%\draw (330:3 cm) -- (330:2.5 cm);
\draw (180:5 cm) -- (175:4.5 cm);

\node[ver] () at (210:5 cm){$\bullet$};
\node[ver] () at (180:3 cm){$\bullet$};
\node[ver] () at (210:3 cm){$\bullet$};
\node[ver] () at (240:3 cm){$\bullet$};
\node[ver] () at (240:5 cm){$\bullet$};
\node[ver] () at (270:3 cm){$\bullet$};
\node[ver] () at (270:5 cm){$\bullet$};
\node[ver] () at (300:3 cm){$\bullet$};
\node[ver] () at (300:5 cm){$\bullet$};
\node[ver] () at (330:3 cm){$\bullet$};
\node[ver] () at (330:5 cm){$\bullet$};
\node[ver] () at (0:5 cm){$\bullet$};
\node[ver] () at (180:5 cm){$\bullet$};
\node[ver] () at (330:3 cm){$\bullet$};

\node[ver] () at (180:6.3 cm){$v_{2k}$};
\node[ver] () at (205:5.7 cm){$w_{k}$};
\node[ver] () at (235:5.7 cm){$v_{1}$};
\node[ver] () at (275:5.7 cm){$v_{2}$};
\node[ver] () at (300:5.7 cm){$w_{1}$};
\node[ver] () at (330:5.7 cm){$v_{3}$};
\node[ver] () at (0:5.7 cm){$v_{4}$};

%\node[ver] () at (203:3.5 cm){$E$};\node[ver] () at (228:4 cm){$B$};
%\node[ver] () at (250:4 cm){$C$};\node[ver] () at (270:4 cm){$D$};
%\node[ver] () at (292:3.5 cm){$E$};\node[ver] () at (318:4 cm){$B$};
%\node[ver] () at (338:4 cm){$C$};

\end{tikzpicture}
\hspace{.5cm}
\begin{tikzpicture}[scale=0.3]

%\draw[red,thick,dashed] (2,2) circle (3cm);
\draw [thick,dashed] (5,0) arc (0:180:5cm);\draw [thick,dashed] (3,0) arc (0:180:3cm);
%\draw (-3, 0) -- ({-2.5*sqrt(2)}, {-2.5*sqrt(2)}) -- (0, -5) -- ({2.5*sqrt(2)}, {-2.5*sqrt(2)}) -- (3, 0);

\draw (180:5 cm) -- (210:5 cm) -- (240:5 cm) -- (270:5 cm) -- (300:5 cm)-- (330:5 cm)-- (0:5 cm);

\draw (180:3 cm) -- (180:3 cm) -- (210:3 cm) -- (240:3 cm) -- (270:3 cm)-- (300:3 cm)-- (0:3 cm);

\draw (180:5 cm) -- (180:3 cm);
\draw (210:5 cm) -- (180:3 cm);
\draw (240:5 cm) -- (210:3 cm);
\draw (240:5 cm) -- (240:3 cm);
\draw (270:5 cm) -- (270:3 cm);
\draw (270:5 cm) -- (300:3 cm);
\draw (300:5 cm) -- (0:3 cm);
\draw (330:5 cm) -- (0:3 cm);

%\draw (180:3 cm) -- (170:2.5 cm);%\draw (180:3 cm) -- (190:2.5 cm);
%\draw (195:3 cm) -- (185:2.5 cm);\draw (195:3 cm) -- (205:2.5 cm);
\draw (210:3 cm) -- (210:2.5 cm);%\draw (210:3 cm) -- (220:2.5 cm);
\draw (240:3 cm) -- (240:2.5 cm);
\draw (270:3 cm) -- (270:2.5 cm);
\draw (300:3 cm) -- (300:2.5 cm);
%\draw (315:3 cm) -- (305:2.5 cm);\draw (315:3 cm) -- (325:2.5 cm);
%\draw (330:3 cm) -- (330:2.5 cm);
%\draw (360:3 cm) -- (360:2.5 cm);

\node[ver] () at (180:5 cm){$\bullet$};
\node[ver] () at (180:3 cm){$\bullet$};
\node[ver] () at (210:5 cm){$\bullet$};
\node[ver] () at (180:3 cm){$\bullet$};
\node[ver] () at (240:5 cm){$\bullet$};
\node[ver] () at (210:3 cm){$\bullet$};
\node[ver] () at (270:5 cm){$\bullet$};
\node[ver] () at (270:3 cm){$\bullet$};
\node[ver] () at (300:5 cm){$\bullet$};
\node[ver] () at (300:3 cm){$\bullet$};
\node[ver] () at (330:5 cm){$\bullet$};
\node[ver] () at (240:3 cm){$\bullet$};
%\node[ver] () at (0:5 cm){$\bullet$};
\node[ver] () at (0:3 cm){$\bullet$};

\node[ver] () at (180:6.3 cm){$v_{2k-1}$};
\node[ver] () at (205:5.7 cm){$v_{2k}$};
\node[ver] () at (235:5.7 cm){$w_{k}$};
\node[ver] () at (275:5.7 cm){$w_{k+1}$};
\node[ver] () at (300:5.7 cm){$v_{1}$};
\node[ver] () at (330:5.7 cm){$v_{2}$};
%\node[ver] () at (0:5.7 cm){$v_{3}$};

%\node[ver] () at (188:4 cm){$E$};\node[ver] () at (213:4 cm){$F$};
%\node[ver] () at (234:3.5 cm){$C$};\node[ver] () at (258:4 cm){$D$};
%\node[ver] () at (278:3.5 cm){$E$};\node[ver] () at (298:4 cm){$B$};
%\node[ver] () at (328:4 cm){$C$};

\end{tikzpicture}
\vspace{-2mm}
\caption*{{\bf Figure 5: Some more maps on the 2-disc}}

\end{figure}

%%%%%%%%%%%%%%%%%%%%%%%%%%%%%%%%%%%%%%%%%%%%%

By the claim, all the vertices on $P_0$, $Q_0$ and $Q_1$ are distinct.

Now $A_{0,0}$, $F_{0,0}$ are faces through $a_{0,1}$. The other two faces through $a_{0,1}$ must be of the form $B_{0,1} := C_4(a_{0,1}, b_{0,2}, a_{0,2}, b_{0,1})$ and $C_{0,1} := C_3(a_{0,1}, c_{1,1}, b_{0,2})$. Then the fourth face through $c_{1,1}$  must be $D_{0,1} := C_4(c_{1,1}, b_{0,2}, c_{0,2}, b_{1,1})$. Similarly, the faces through $b_{1,1}$ are $A_{1,0}$, $D_{0,1}$, $E_{0,1} := C_3(b_{1,1}, c_{0,2}, a_{1,2})$ and  $B_{1,1} := C_4(b_{1,1}, a_{1,2}, b_{1,2}, a_{1,1})$. Continuing this way we get a path $P_1$ and faces $B_{i,1}$, $C_{i,1}$, $D_{i,1}$, $E_{i,1}$, $i \in \mathbb{Z}$, between $Q_1$ and $P_1$ (see Fig. 3).
Where, $B_{i,j}:=$ $C_4(a_{i,2j}, b_{i,2j}, a_{i,2j-1}, b_{i,2j-1})$, $C_{i,j} :=$ $C_3(b_{i,2j}, c_{i+1,2j-1}, a_{i,2j-1})$, $D_{i,j} :=$ $C_4(b_{i,2j}, c_{i,2j}, b_{i+1,2j-1}, c_{i+1,2j-1})$, $E_{i,j} :=$ $C_3(c_{i,2j}, a_{i+1,2j}, b_{i+1,2j-1})$, $i,j\in \mathbb{Z}$.

If we now consider the edge $a_{0,2}b_{0,2}$ on the path $P_1$ we get new path $Q_2 := \cdots   \mbox{-}c_{-1,3} \mbox{-} b_{-1,3}$ $\mbox{-}a_{-1,3}\mbox{-}c_{0,3}\mbox{-}b_{0,3} \mbox{-}a_{0,3}\mbox{-}c_{1,3}\mbox{-}b_{1,3}\mbox{-}a_{1,3}\mbox{-} \cdots$ and faces $A_{i,1}$, $F_{i,1}$, $i \in \mathbb{Z}$, between $P_1$ and $Q_2$.
Where, $A_{i,j} :=$ $C_6(a_{i,2j}, b_{i,2j}, c_{i,2j}, a_{i,2j+1}, b_{i,2j+1}, c_{i,2j+1})$, $F_{i,j} :=$ $C_4(c_{i,2j}$, $a_{i+1,2j}$, $c_{i+1,2j+1}$, $a_{i,2j+1})$, $i,j\in \mathbb{Z}$. Continuing this way we get the map $X$ is as in  Fig. 3.
Then $a_{i,j} \mapsto u_{i,j}$, $b_{i,j} \mapsto v_{i,j}$, $c_{i,j} \mapsto  w_{i,j}$, for all $i, j$, define an isomorphism from $X$ to $E_8$. This proves the lemma.
\end{proof}

\begin{lemma} \label{lemma:E9}
Let $E_9$ be as in Example $\ref{exam:plane}$.
Let $X$ be a semi-equivelar map on the plane. If the type of $X$ is $[3^1, 12^2]$ then $X\cong E_9$.
\end{lemma}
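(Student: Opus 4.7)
The plan is to follow the strategy of Lemma \ref{lemma:E7}: reduce the uniqueness problem to that of a simpler equivelar tiling whose uniqueness is already known. The relevant relationship here is that $E_9$ is the truncated hexagonal tiling; equivalently, the hexagonal tiling $E_3$ is obtained from $E_9$ by collapsing each triangular face to a single vertex. The goal is to lift this correspondence to arbitrary semi-equivelar maps of type $[3^1, 12^2]$ on the plane, reducing everything to the known fact that $E_3$ is the unique $[6^3]$-tiling on the plane.

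Let $X$ be such a map. Since the face-cycle contains exactly one triangle at every vertex, each vertex of $X$ lies in a unique triangle, so the triangles of $X$ partition $V(X)$ into disjoint triples. Moreover, each edge of a triangle must be shared with a $12$-gon rather than with another triangle (otherwise both endpoints of that edge would lie in two triangles). Define $\widehat{X}$ by contracting each triangle of $X$ to a single vertex. I would then verify that $\widehat{X}$ is a semi-equivelar map of type $[6^3]$ on the plane: each contracted vertex inherits three edges (one dodecagon-dodecagon edge from each of the three original triangle-vertices), so it has degree $3$; and walking around any $12$-gon $D$ of $X$, the edges alternate between triangle-edges and dodecagon-dodecagon edges (six of each, since at each vertex of $D$ the face-cycle is $T,D,D'$), so after contracting the six triangles incident to $D$, the dodecagon becomes a hexagon with six distinct contracted vertices. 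Invoking the known uniqueness of $E_3$ as the only semi-equivelar $[6^3]$-map on the plane (from \cite{DU2005}), one concludes $\widehat{X} \cong E_3$.

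The final step is to reconstruct $X$ from $\widehat{X} \cong E_3$ by truncation: at each $3$-valent vertex $v^\ast$ of $\widehat{X}$, replace $v^\ast$ by a triangle whose three vertices lie one on each of the three edges incident to $v^\ast$. This operation is determined by the cyclic order of those three edges, which is fixed by the planar embedding, and it is manifestly the inverse of the contraction. Since the truncation of $E_3$ is $E_9$ by definition, this yields $X \cong E_9$, which also proves that $X$ is vertex-transitive since $E_9$ is.

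The main technical obstacle will be to justify rigorously that contraction and truncation give an inverse bijective correspondence between isomorphism classes of semi-equivelar $[3^1, 12^2]$-maps and $[6^3]$-maps on the plane. Specifically, one must verify via local analysis of face-cycles that $\widehat{X}$ is a genuine polyhedral map (no accidental identifications among contracted vertices, and each hexagon has six distinct boundary vertices with consistent adjacency), and that the reverse truncation is uniquely determined up to isomorphism by the planar embedding. Both verifications reduce to bookkeeping at the level of the three edges meeting at each vertex, but they need to be executed carefully to ensure that the combinatorial structure of $X$ is fully recovered from that of $\widehat{X}$.
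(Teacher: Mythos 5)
Your proposal is correct in outline but takes a genuinely different route from the paper. The paper proves this lemma by direct forced propagation: starting from an edge shared by two $12$-gons it builds the horizontal paths $P_{-1},P_0,P_1,\dots$ face by face, uses an Euler-characteristic parity count on the $2$-disk (Lemma \ref{lemma:disc3122}) to show these paths cannot close into cycles and hence are infinite, and then writes down an explicit isomorphism with $E_9$. You instead reduce to a previously settled case, exactly in the spirit of the paper's own treatment of $[3^4,6^1]$ in Lemma \ref{lemma:E7}: contract each triangle (the triangles partition $V(X)$ since $n_3(u)=1$ for every $u$) to obtain a $[6^3]$-map, invoke the uniqueness of $E_3$ (which the paper itself gets from \cite{DU2005} by dualizing the $[3^6]$ case), and recover $X$ as the truncation. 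Your approach is shorter and more conceptual, and it dispenses with the disc-counting lemma and the infinite-path argument entirely; what it costs is the verification you yourself flag, which is real but routine: (i) the six triangles meeting a fixed $12$-gon are pairwise distinct and the three dodecagon--dodecagon edges at the three vertices of a triangle map to three distinct edges of $\widehat{X}$ (both follow from the polyhedrality of $X$, since a triangle and a $12$-gon, or two $12$-gons, cannot intersect in more than an edge); (ii) two hexagons of $\widehat{X}$ meet in at most an edge (a short case analysis using that any two of the three $12$-gons around a triangle already share a vertex of that triangle); and (iii) the vertex map $v\mapsto(\widehat{T},\widehat{e})$, where $T$ is the triangle at $v$ and $e$ the dodecagon--dodecagon edge at $v$, identifies $X$ with the combinatorial truncation of $\widehat{X}$. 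The paper's direct method has the side benefit of producing the explicit coordinatization of $E_9$ reused in the orbit computations of Section \ref{sec:proofs-2}, and it applies uniformly to the types (such as $[4^1,6^1,12^1]$) where the reduction target is less immediate; your method makes the structural reason for uniqueness transparent. No gap, provided you carry out the bookkeeping in (i)--(iii).
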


To prove Lemma \ref{lemma:E9}, we need the following.

\begin{lemma} \label{lemma:disc3122}
Let $M$ be a map on the $2$-disk $\mathbb{D}^2$ whose faces are triangles, $12$-gons. For a vertex $u$ of $M$ and $i=3, 12$, let $n_i(u)$ be the number $i$-gons through $u$. Then $M$ can not satisfy both the following: {\rm (i)} $(n_3(u), n_{12}(u)) = (1, 2)$ for each internal vertex $u$, {\rm (ii)} boundary $\partial \mathbb{D}^2$ has $2k+2\ell$ vertices in which $2k$ vertices which are in one triangle and $12$-gon, and $2\ell$ vertices which are in one $12$-gon only, for some $k > 0$ and $k-1 \leq \ell \le k+3$.
\end{lemma}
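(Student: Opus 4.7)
The plan is to argue by Euler characteristic, following the same strategy used in Lemmas \ref{lemma:disc3464-1} and \ref{lemma:disc3464-2}: count vertices, edges and faces in two different ways, then plug into $f_0-f_1+f_2=\chi(\mathbb{D}^2)=1$ and derive a contradiction. Suppose for contradiction that such an $M$ exists. Let $n$ be the number of interior vertices and let $n_3, n_{12}$ denote the total numbers of triangles and $12$-gons of $M$. Then $f_0 = n + 2k + 2\ell$.

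Counting vertex-face incidences, each interior vertex contributes $1$ to the triangle count and $2$ to the $12$-gon count, each of the $2k$ boundary vertices of the first kind contributes $1$ and $1$, and each of the $2\ell$ boundary vertices of the second kind contributes $0$ and $1$. This yields
\begin{align*}
3 n_3 = n + 2k, \qquad 12 n_{12} = 2n + 2k + 2\ell,
\end{align*}
so $n_3 = (n+2k)/3$ and $n_{12} = (n+k+\ell)/6$. Next, the boundary $\partial \mathbb{D}^2$ is a simple closed curve, hence has exactly $2k+2\ell$ edges. Counting edge-face incidences ($2$ for each interior edge, $1$ for each boundary edge) gives
\begin{align*}
2 e_{\mathrm{int}} + (2k + 2\ell) = 3 n_3 + 12 n_{12} = 3n + 4k + 2\ell,
\end{align*}
so $e_{\mathrm{int}} = (3n+2k)/2$ and therefore $f_1 = e_{\mathrm{int}} + (2k+2\ell) = (3n + 6k + 4\ell)/2$. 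Also $f_2 = n_3 + n_{12} = (3n + 5k + \ell)/6$.

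Substituting into $f_0 - f_1 + f_2 = 1$ and multiplying through by $6$ gives
\begin{align*}
6(n + 2k + 2\ell) - 3(3n + 6k + 4\ell) + (3n + 5k + \ell) = 6,
\end{align*}
in which the coefficients of $n$, $k$, $\ell$ reduce to $0$, $-1$, $1$ respectively, leaving $\ell - k = 6$. This directly contradicts the hypothesis $\ell \le k+3$, completing the proof. The only real thing to be careful about is the accounting on the boundary (notably, that a boundary vertex of the second kind lies on exactly two boundary edges and no interior edge, while a boundary vertex of the first kind lies on two boundary edges and one interior edge); once this is set up the contradiction pops out of a single linear computation, and unlike the preceding two lemmas no auxiliary divisibility argument is required.
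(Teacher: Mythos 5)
Your proof is correct, and it is the same basic strategy as the paper's (double-counting incidences and plugging into $f_0-f_1+f_2=1$), but the way you extract the contradiction is genuinely cleaner. The paper first invokes integrality: from $f_1=(3n+6k+4\ell)/2$ it deduces $n$ is even, then writes $f_2=k+(3n+t)/6$ with $t=\ell-k\in\{-1,\dots,3\}$ and uses $n_{12}\in\mathbb{Z}$ to force $t=0$, i.e.\ $\ell=k$; only then does it compute $f_0-f_1+f_2=0\neq 1$. You instead keep $k$ and $\ell$ symbolic throughout and observe that
\begin{align*}
f_0-f_1+f_2=\frac{\ell-k}{6},
\end{align*}
so that $\chi(\mathbb{D}^2)=1$ would force $\ell=k+6$, directly contradicting $\ell\le k+3$. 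Your single linear computation subsumes both of the paper's steps (it is consistent with their finding that $\ell=k$ gives Euler characteristic $0$), needs no parity or divisibility bookkeeping, and in fact only uses the upper bound $\ell\le k+3$ rather than the full two-sided hypothesis. Your incidence counts ($3n_3=n+2k$, $12n_{12}=2n+2k+2\ell$, and the boundary/interior edge accounting) all check out and reproduce the paper's values of $f_0$, $f_1$, $f_2$, so there is no gap.
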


\begin{proof}
Let $f_0, f_1$ and $f_2$ denote the number of vertices, edges and faces of $M$ respectively. For $i= 3, 12$, let $n_i$ denote the total number of $i$-gons in $M$.
Let there be $n$ internal vertices. Then $f_0=n+2k+2\ell$, $f_2=n_3+n_{12}$ and $f_1=(3n+6k+4\ell)/2$. So, $n$ is even.

Suppose $M$ satisfies (i) and (ii).
Then $n_3=(n+2k)/3$, $n_{12}=(2n+2k+2\ell)/12$. Now $f_2=n_3+n_{12} = (n+2k)/3 + (2n+2k+2\ell)/12 = (6n+10k+2\ell)/12 = k + (3n+t)/6$, $-1 \leq t\leq 3$. Since $n$ is even, this implies that $t = 0$. So, $\ell = k$.
Therefore, $f_0 = n+4k$, $f_2= k + n/2$ and $f_1 = (3n+6k+4k)/2 = n+n/2+5k$. Hence  $f_0-f_1+f_2 = (n+4k) -(n+n/2+5k) + (n/2+k) = 0$. This is not possible since the Euler characteristic of the 2-disk $\mathbb{D}^2$ is 1. This completes the proof.
\end{proof}

%%%%%%%%%%%%%%%%%%%%%%%%%%%%%%%%%%%%%%%%%%%%%%%

%\newpage

\begin{figure}[ht]
\tiny
\tikzstyle{ver}=[]
\tikzstyle{vert}=[circle, draw, fill=black!100, inner sep=0pt, minimum width=4pt]
\tikzstyle{vertex}=[circle, draw, fill=black!00, inner sep=0pt, minimum width=4pt]
\tikzstyle{edge} = [draw,thick,-]
\centering

\begin{tikzpicture}[scale=0.65]
%\begin{scope}[shift={(-50,13)}]
%\draw [xshift = -55, yshift = 285] ({2*cos(195)},{2*sin(185)}) -- ({2*cos(195)},{2*sin(195)}) -- ({2*cos(225)},{2*sin(225)}) -- ({2*cos(255)},{2*sin(255)}) -- ({2*cos(285)},{2*sin(285)}) -- ({2*cos(315)},{2*sin(315)}) -- ({2*cos(345)},{2*sin(345)});

%\draw [xshift = 55, yshift = 285] ({2*cos(195)},{2*sin(185)}) -- ({2*cos(195)},{2*sin(195)}) -- ({2*cos(225)},{2*sin(225)}) -- ({2*cos(255)},{2*sin(255)}) -- ({2*cos(285)},{2*sin(285)}) -- ({2*cos(315)},{2*sin(315)}) -- ({2*cos(345)},{2*sin(345)});

%\draw [xshift = 165, yshift = 285] ({2*cos(195)},{2*sin(185)}) -- ({2*cos(195)},{2*sin(195)}) -- ({2*cos(225)},{2*sin(225)}) -- ({2*cos(255)},{2*sin(255)}) -- ({2*cos(285)},{2*sin(285)}) -- ({2*cos(315)},{2*sin(315)}) -- ({2*cos(345)},{2*sin(345)});

%\draw [xshift = 275, yshift = 285] ({2*cos(195)},{2*sin(185)}) -- ({2*cos(195)},{2*sin(195)}) -- ({2*cos(225)},{2*sin(225)}) -- ({2*cos(255)},{2*sin(255)}) -- ({2*cos(270)},{2*sin(270)});

%%%%%

\draw [xshift = -110] ({2*cos(90)},{2*sin(90)}) -- ({2*cos(75)},{2*sin(75)}) -- ({2*cos(45)},{2*sin(45)}) -- ({2*cos(15)},{2*sin(15)}) -- ({2*cos(345)},{2*sin(345)}) -- ({2*cos(315)},{2*sin(315)}) -- ({2*cos(285)},{2*sin(285)}) -- ({2*cos(270)},{2*sin(270)});

\draw ({2*cos(15)},{2*sin(15)}) -- ({2*cos(45)},{2*sin(45)}) -- ({2*cos(75)},{2*sin(75)}) --  ({2*cos(105)},{2*sin(105)}) -- ({2*cos(135)},{2*sin(135)}) -- ({2*cos(165)},{2*sin(165)}) -- ({2*cos(195)},{2*sin(195)}) -- ({2*cos(225)},{2*sin(225)}) -- ({2*cos(255)},{2*sin(255)}) -- ({2*cos(285)},{2*sin(285)}) -- ({2*cos(315)},{2*sin(315)}) -- ({2*cos(345)},{2*sin(345)}) -- ({2*cos(15)},{2*sin(15)});

\draw [xshift = 110] ({2*cos(15)},{2*sin(15)}) -- ({2*cos(45)},{2*sin(45)}) -- ({2*cos(75)},{2*sin(75)}) --  ({2*cos(105)},{2*sin(105)}) -- ({2*cos(135)},{2*sin(135)}) -- ({2*cos(165)},{2*sin(165)}) -- ({2*cos(195)},{2*sin(195)}) -- ({2*cos(225)},{2*sin(225)}) -- ({2*cos(255)},{2*sin(255)}) -- ({2*cos(285)},{2*sin(285)}) -- ({2*cos(315)},{2*sin(315)}) -- ({2*cos(345)},{2*sin(345)}) -- ({2*cos(15)},{2*sin(15)});

\draw [xshift = 220] ({2*cos(15)},{2*sin(15)}) -- ({2*cos(45)},{2*sin(45)}) -- ({2*cos(75)},{2*sin(75)}) --  ({2*cos(105)},{2*sin(105)}) -- ({2*cos(135)},{2*sin(135)}) -- ({2*cos(165)},{2*sin(165)}) -- ({2*cos(195)},{2*sin(195)}) -- ({2*cos(225)},{2*sin(225)}) -- ({2*cos(255)},{2*sin(255)}) -- ({2*cos(285)},{2*sin(285)}) -- ({2*cos(315)},{2*sin(315)}) -- ({2*cos(345)},{2*sin(345)}) -- ({2*cos(15)},{2*sin(15)});

%%%%%%%%%

\draw [xshift = -55, yshift = 95] ({2*cos(15)},{2*sin(15)}) -- ({2*cos(45)},{2*sin(45)}) -- ({2*cos(75)},{2*sin(75)}) --  ({2*cos(105)},{2*sin(105)}) -- ({2*cos(135)},{2*sin(135)}) -- ({2*cos(165)},{2*sin(165)}) -- ({2*cos(195)},{2*sin(195)}) -- ({2*cos(225)},{2*sin(225)}) -- ({2*cos(255)},{2*sin(255)}) -- ({2*cos(285)},{2*sin(285)}) -- ({2*cos(315)},{2*sin(315)}) -- ({2*cos(345)},{2*sin(345)}) -- ({2*cos(15)},{2*sin(15)});

\draw [xshift = 55, yshift = 95] ({2*cos(15)},{2*sin(15)}) -- ({2*cos(45)},{2*sin(45)}) -- ({2*cos(75)},{2*sin(75)}) --  ({2*cos(105)},{2*sin(105)}) -- ({2*cos(135)},{2*sin(135)}) -- ({2*cos(165)},{2*sin(165)}) -- ({2*cos(195)},{2*sin(195)}) -- ({2*cos(225)},{2*sin(225)}) -- ({2*cos(255)},{2*sin(255)}) -- ({2*cos(285)},{2*sin(285)}) -- ({2*cos(315)},{2*sin(315)}) -- ({2*cos(345)},{2*sin(345)}) -- ({2*cos(15)},{2*sin(15)});

\draw [xshift = 165, yshift = 95] ({2*cos(15)},{2*sin(15)}) -- ({2*cos(45)},{2*sin(45)}) -- ({2*cos(75)},{2*sin(75)}) --  ({2*cos(105)},{2*sin(105)}) -- ({2*cos(135)},{2*sin(135)}) -- ({2*cos(165)},{2*sin(165)}) -- ({2*cos(195)},{2*sin(195)}) -- ({2*cos(225)},{2*sin(225)}) -- ({2*cos(255)},{2*sin(255)}) -- ({2*cos(285)},{2*sin(285)}) -- ({2*cos(315)},{2*sin(315)}) -- ({2*cos(345)},{2*sin(345)}) -- ({2*cos(15)},{2*sin(15)});

\draw [xshift = 275, yshift = 95] ({2*cos(90)},{2*sin(90)}) -- ({2*cos(105)},{2*sin(105)}) -- ({2*cos(135)},{2*sin(135)}) -- ({2*cos(165)},{2*sin(165)}) -- ({2*cos(195)},{2*sin(195)}) -- ({2*cos(225)},{2*sin(225)}) -- ({2*cos(255)},{2*sin(255)}) -- ({2*cos(270)},{2*sin(270)});

%%%%%%%%%%%%

\draw [xshift = -110, yshift = 190]({2*cos(315)},{2*sin(315)}) -- ({2*cos(285)},{2*sin(285)}) -- ({2*cos(270)},{2*sin(270)});

\draw [yshift = 190] ({2*cos(255)},{2*sin(255)}) -- ({2*cos(285)},{2*sin(285)});

\draw [xshift = 110, yshift = 190] ({2*cos(225)},{2*sin(225)}) -- ({2*cos(255)},{2*sin(255)}) -- ({2*cos(285)},{2*sin(285)});

\draw [xshift = 220, yshift = 190] ({2*cos(225)},{2*sin(225)}) -- ({2*cos(255)},{2*sin(255)}) -- ({2*cos(285)},{2*sin(285)});

%%%%%

\draw [xshift = -55, yshift = -95] ({2*cos(75)},{2*sin(75)}) --  ({2*cos(105)},{2*sin(105)}) -- ({2*cos(135)},{2*sin(135)}) ;

\draw [xshift = 55, yshift = -95]  ({2*cos(75)},{2*sin(75)}) --  ({2*cos(105)},{2*sin(105)}) -- ({2*cos(135)},{2*sin(135)});

\draw [xshift = 165, yshift = -95]  ({2*cos(75)},{2*sin(75)}) --  ({2*cos(105)},{2*sin(105)}) -- ({2*cos(135)},{2*sin(135)});

\draw [xshift = 275, yshift = -95] ({2*cos(90)},{2*sin(90)}) -- ({2*cos(105)},{2*sin(105)}) -- ({2*cos(135)},{2*sin(135)});

\node[ver] () at (1.1-4.2,-1.2){$c_{-1,-1}$};
\node[ver] () at (.5-4.2,-1.7){$c_{-2,-1}$};
\node[ver] () at (1.1-4.2,1.2){$b_{-1,0}$};
\node[ver] () at (.5-4.2,1.7){$b_{-2,0}$};

\node[ver] () at (-2.8,-.5){$a_{-1,-1}$};
\node[ver] () at (-1.1,-1.1){$b_{0,-1}$};
\node[ver] () at (-.5,-1.6){$b_{1,-1}$};
\node[ver] () at (1.3,-.5){$a_{1,-1}$};
\node[ver] () at (1.1,-1.1){$c_{1,-1}$};
\node[ver] () at (.5,-1.6){$c_{0,-1}$};
\node[ver] () at (1.4,.5){$a_{0,0}$};
\node[ver] () at (1.1,1.1){$b_{1,0}$};
\node[ver] () at (.6,1.6){$b_{0,0}$};
\node[ver] () at (-2.6,.5){$a_{-2,0}$};
\node[ver] () at (-1,1.1){$c_{-2,0}$};
\node[ver] () at (-.3,1.6){$c_{-1,0}$};

%\node[ver] () at (-1.4,-.5){$a_{0,-2}$};
\node[ver] () at (-1.1+3.9,-1.1){$b_{2,-1}$};
\node[ver] () at (-.5+3.8,-1.6){$b_{3,-1}$};
\node[ver] () at (1.4+3.8,-.5){$a_{3,-1}$};
\node[ver] () at (1.1+3.8,-1.1){$c_{3,-1}$};
\node[ver] () at (.5+3.8,-1.6){$c_{2,-1}$};
\node[ver] () at (1.4+3.8,.5){$a_{2,0}$};
\node[ver] () at (1.1+3.8,1.1){$b_{3,0}$};
\node[ver] () at (.6+3.9,1.6){$b_{2,0}$};
%\node[ver] () at (-1.4,.5){$a_{0,-1}$};
\node[ver] () at (-.7+3.8,1.3){$c_{0,0}$};
\node[ver] () at (-.4+3.9,1.6){$c_{1,0}$};

%\node[ver] () at (-1.4,-.5){$a_{0,-2}$};
\node[ver] () at (-1+7.7,-1.1){$b_{4,-1}$};
\node[ver] () at (-.5+7.6,-1.6){$b_{5,-1}$};
\node[ver] () at (1.4+7.6,-.5){$a_{5,-1}$};
\node[ver] () at (1.1+7.6,-1.1){$c_{5,-1}$};
\node[ver] () at (.5+7.6,-1.6){$c_{4,-1}$};
\node[ver] () at (1.5+7.6,.5){$a_{4,0}$};
\node[ver] () at (1.1+7.6,1.1){$b_{5,0}$};
\node[ver] () at (.5+7.8,1.6){$b_{4,0}$};
%\node[ver] () at (-1.4,.5){$a_{0,-1}$};
\node[ver] () at (-.7+7.6,1.3){$c_{2,0}$};
\node[ver] () at (-.4+7.8,1.6){$c_{3,0}$};

%%%%%%%%%%%%
%\node[ver] () at (1.1-4.2,-1.2 + 5){$a_{-1,-1}$};
\node[ver] () at (.9-4.1,-1.2 + 4){$a_{-3,0}$};

%\node[ver] () at (.9,-1.2 + 5){$a_{-1,-1}$};
\node[ver] () at (0.7,-1.2 + 4){$a_{-1,0}$};

%\node[ver] () at (4.6,-1.2 + 5){$a_{-1,-1}$};
\node[ver] () at (4.4,-1.2 + 4){$a_{1,0}$};

%\node[ver] () at (8.5,-1.2 + 5){$a_{-1,-1}$};
\node[ver] () at (8.3,-1.2 + 4){$a_{3,0}$};

\node[ver] () at (1.7-4.2,5.5){$c_{-3,1}$};
\node[ver] () at (-2.7,-1.6+6.2){$c_{-4,1}$};

\node[ver] () at (-1.4,5.5){$b_{-2,1}$};
\node[ver] () at (-1,-1.6+6.2){$b_{-1,1}$};
%\node[ver] () at (1.4,-.5+6.7){$a_{-1,1}$};
\node[ver] () at (1.4,-1.1+6.6){$c_{-1,1}$};
\node[ver] () at (1.2,-1.6+6.2){$c_{-2,1}$};

%\node[ver] () at (-1.4,-.5){$a_{0,-2}$};
\node[ver] () at (-1.1+3.6,5.5){$b_{0,1}$};
\node[ver] () at (-.5+3.5,-1.6+6.2){$b_{1,1}$};
%\node[ver] () at (1.4+3.8,-.5+6.7){$a_{1,1}$};
\node[ver] () at (1.5+3.8,5.5){$c_{1,1}$};
\node[ver] () at (4.8,-1.6+6.2){$c_{0,1}$};

%\node[ver] () at (-1.4,-.5){$a_{0,-2}$};
\node[ver] () at (-1+7.2,5.5){$b_{2,1}$};
\node[ver] () at (-.5+7.4,-1.6+6.2){$b_{3,1}$};
%\node[ver] () at (1.4+7.6,-.5+6.7){$a_{1,1}$};
\node[ver] () at (1.4+7.9,5.5){$c_{3,1}$};
\node[ver] () at (1.1+7.6,-1.6+6.2){$c_{2,1}$};
%\node[ver] () at (1.5+7,-1.2 + 10.5){$a_{-1,-1}$};

\node[ver] () at (1.1-4.2,-1.2 - 1.7+6.7){$a_{-4,1}$};
\node[ver] () at (1.1-.4,-1.2 - 1.7+6.7){$a_{-2,1}$};
\node[ver] () at (1+3.4,-1.2 - 1.7+6.7){$a_{0,1}$};
\node[ver] () at (1.2+7,-1.2 - 1.7+6.7){$a_{2,1}$};
%%%%%%%%
\node[ver] () at (0,0){$A_{0,-1}$};
\node[ver] () at (4,0){$A_{1,-1}$};
\node[ver] () at (7.8,0){$A_{2,-1}$};

\node[ver] () at (-1.5,3.5){$A_{-1,0}$};
\node[ver] () at (2,3.5){$A_{0,0}$};
\node[ver] () at (6,3.5){$A_{1,0}$};
\node[ver] () at (9.5,3.5){$A_{2,0}$};

\node[ver] () at (0,2.1){$D_{0,0}$};
\node[ver] () at (3.9,2.1){$D_{1,0}$};
\node[ver] () at (7.8,2.1){$D_{2,0}$};

\node[ver] () at (2,1.1){$F_{0,0}$};
\node[ver] () at (5.8,1.1){$F_{1,0}$};

\node[ver] () at (4,4.4){$F_{0,1}$};
\node[ver] () at (7.8,4.4){$F_{1,1}$};

\node[ver] () at (10.3,1.3){$P_{0}$};
\node[ver] () at (10.3,-1.3){$P_{-1}$};
\node[ver] () at (10.3,5.2){$P_{1}$};

\node[ver] () at (2, -3.3){\normalsize \bf Figure 6:  Part of {\boldmath $X$} of Lemma \ref{lemma:E9}};

\end{tikzpicture}

\end{figure}

%%%%%%%%%%%%%%%%%%%%%%%%%%%%%%%%%%%%%%%%%%%%%%%

\begin{proof}[Proof of Lemma \ref{lemma:E9}]
Let $b_{0,0}b_{1,0}$ be an edge of $X$ in two 12-gons. Assume that these 12-gons are
 $A_{0,0} := C_{12}(b_{0,0}, b_{1,0}, c_{0,0}, c_{1,0}, a_{1,0}, a_{0,1}, b_{1,1}, b_{0,1}, c_{-1,1}, c_{-2,1}, a_{-2,1}, a_{-1,0})$ and  $A_{0,-1} := C_{12}(b_{0,-1}, b_{1,-1}, c_{0,-1}, c_{1,-1}, a_{1,-1}, a_{0,0}, b_{1,0}, b_{0,0}, c_{-1,0}, c_{-2,0}, a_{-2,0}, a_{-1,-1})$ (see Fig. 6). This implies that $D_{0,0}:= C_{3}(b_{0,0}, a_{-1,0}, c_{-1,0})$ and $F_{0,0} := C_{3}(b_{1,0}, a_{0,0}, c_{0,0})$ are faces (through $b_{0,0}$ and $b_{1,0}$ respectively).  Clearly, the second face containing the edge $a_{0,0}c_{0,0}$ must be of the form $A_{1,-1} := C_{12}( b_{2,-1}, b_{3,-1}, c_{2,-1}, c_{3,-1}, a_{3,-1}, a_{2,0}, b_{3,0}, b_{2,0}, c_{1,0}, c_{0,0}, a_{0,0}, a_{1,-1})$.
Then $D_{1,0} := C_{3}(c_{1,0}, b_{2,0}, a_{1,0})$ and $D_{1,-1} := C_{3}(c_{1,-1}, b_{2,-1}, a_{1,-1})$ are faces. Again, the second face through $b_{2,0}b_{3,0}$ must be of the form $A_{1,0} := C_{12}(b_{2,0}, b_{3,0}, c_{2,0}, c_{3,0}$, $a_{3,0}$, $a_{2,1}$, $b_{3,1}, b_{2,1}, c_{1,1}, c_{0,1}, a_{0,1}, a_{1,0})$. Then $F_{1,0} := C_3(b_{3,0}, a_{2,0}, c_{2,0})$ and $F_{0,1} := C_3(b_{1,1}, a_{0,1}, c_{0,1})$ are faces.
Continuing this way get the paths $P_j := \cdots\mbox{-}b_{-2,j}\mbox{-}b_{-1,j}\mbox{-} c_{-2,j}\mbox{-}c_{-1,j}\mbox{-}b_{0,j}\mbox{-}b_{1,j}\mbox{-}c_{0,j}$ $\mbox{-}c_{1,j}\mbox{-} b_{2,j}\mbox{-}b_{3,j}\mbox{-}\cdots$ for $-1\leq j\leq 1$.

\medskip

\noindent {\bf Claim.} $P_0$, $P_{-1}$, $P_1$ are infinite paths.

\smallskip

If not then we get a cycle (as a subgraph of $P_0$). This gives (by the similar arguments as in the proof of Lemma \ref{lemma:E8}) a map $M$ on the disc $\mathbb{D}^2$ which satisfies properties (i) and (ii) of Lemma \ref{lemma:disc3122} for some $k, \ell$. But, this is not possible by Lemma \ref{lemma:disc3122}. Thus, $P_0$ is an infinite path. Similarly, $P_{-1}$ and $P_1$ are infinite paths.
This proves the claim.

\smallskip

By the claim, all the vertices on $P_0$, $P_{-1}$ and $P_1$ are distinct. Above arguments also give faces (between the paths $P_{-1}$ and $P_1$) $A_{i,j}, D_{i,j}$,  $F_{i,j+1}$, for $j= -1, 0$, $i\in \mathbb{Z}$, where $A_{i,j} :=$ $C_{12}(b_{2i,j}, b_{2i+1,j}, c_{2i,j}, c_{2i+1,j}, a_{2i+1,j}, a_{2i,j+1}, b_{2i+1,j+1}, b_{2i,j+1}$, $c_{2i-1,j+1}$, $c_{2i-2,j+1}$, $a_{2i-2,j+1}$, $a_{2i-1,j})$, $D_{i,j}:= C_3(c_{2i-1,j}, b_{2i,j}, a_{2i-1,j})$, $F_{i,k}:= C_3(b_{2i+1,k}, a_{2i,k}, c_{2i,k})$.

Similarly, if we start with the edge $b_{0,1}b_{1,1}$ in place of $b_{0,0}b_{1,0}$, we get
faces $A_{i,j}, D_{i,j}$ and $F_{i,j+1}$, for $j= 0, 1$, $i\in \mathbb{Z}$ between the paths $P_{0}$ and $P_2$ (with same common faces between $P_{0}$ and $P_1$).
Continuing this way we get faces $A_{i,j}, D_{i,j}, F_{i,j}$ of $X$, for $i, j \in \mathbb{Z}$. Then the mapping $\varphi \colon X \mapsto E_9$, given by $\varphi(a_{i,j}) = u_{i,j}$, $\varphi(b_{i,j}) = v_{i,j}$, $\varphi(c_{i,j}) = w_{i,j}$, is an isomorphism. This completes the proof.
\end{proof}

\begin{lemma} \label{lemma:E10}
Let $E_{10}$ be as in Example $\ref{exam:plane}$.
Let $X$ be a semi-equivelar map on the plane. If the type of $X$ is $[4^1, 6^1, 12^1]$ then $X\cong E_{10}$.
\end{lemma}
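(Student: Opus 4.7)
The strategy will parallel the proofs of Lemmas~\ref{lemma:E8} and \ref{lemma:E9}. First I would fix an edge $e$ of $X$ and name the two faces containing it. Since every face-cycle of $X$ is of the form $(4,6,12)$, there are exactly three possible edge-types in $X$: a $\{4,6\}$-edge, a $\{4,12\}$-edge, and a $\{6,12\}$-edge, and each vertex is incident to one of each type. I will start with a $\{6,12\}$-edge $b_{0,0}c_{0,0}$ (the convention used in Figure~1(j)), and label the $6$-gon $A_{0,0}$ and the $12$-gon $B_{0,0}$ containing it. Using the forced face-cycle at $b_{0,0}$ and $c_{0,0}$ one immediately determines the $4$-gon adjacent to both endpoints, and also the second $6$-gon and second $12$-gon meeting the labelled faces along their boundary edges.

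Next I would propagate this labelling outward. Exactly as in the proof of Lemma~\ref{lemma:E8}, each already-labelled edge on the boundary of a face determines a neighbouring face whose type and vertex labels are forced by the face-cycle condition. Iterating this construction along a $12$-gon's boundary and along the strip between two successive $12$-gons produces horizontal paths $P_j$ (running through vertices of consecutive $12$-gons) and associated families of labelled squares, hexagons, and dodecagons $A_{i,j}, B_{i,j}, C_{i,j}, \ldots$ indexed by $i,j\in\mathbb{Z}$, matching exactly the labelling in Figure~1(j) for $E_{10}$.

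The crux of the proof, as in Lemmas~\ref{lemma:E8} and~\ref{lemma:E9}, is to show that these paths $P_j$ are infinite, i.e.\ that no $P_j$ closes up into a cycle. If some $P_j$ contained a cycle, the region it bounds would yield a map $M$ on $\mathbb{D}^2$ whose internal vertices all have $(n_4(u),n_6(u),n_{12}(u)) = (1,1,1)$ and whose boundary vertices satisfy a restricted pattern of face-incidences. I would establish a disc-lemma in the spirit of Lemmas~\ref{lemma:disc3464-1}, \ref{lemma:disc3464-2}, and \ref{lemma:disc3122}: enumerate the possible boundary vertex types (each boundary vertex sits on one of $\{4,6\}$, $\{4,12\}$, $\{6,12\}$, or a single face), count $f_0 = n + (\text{boundary size})$, count $f_2 = n_4 + n_6 + n_{12}$ from the interior/boundary incidence data, and count $2f_1$ via $\sum_v\deg(v)$. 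Combining with $4n_4+6n_6+12n_{12}=2f_1^{\text{int}}+f_1^{\text{bdry}}$ and the parity conditions forced by $n_4,n_6,n_{12}\in\mathbb{Z}$, I would show that $f_0 - f_1 + f_2 = 0$ in every case, contradicting $\chi(\mathbb{D}^2)=1$.

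Once infiniteness of the paths is settled, all the labels $a_{i,j}, b_{i,j}, c_{i,j}$ and $u_{i,j},v_{i,j},w_{i,j}$ refer to distinct vertices, and the bijection $a_{i,j}\mapsto u_{i,j}$, $b_{i,j}\mapsto v_{i,j}$, $c_{i,j}\mapsto w_{i,j}$ carries faces of $X$ to faces of $E_{10}$ by construction, giving the desired isomorphism $X\cong E_{10}$. The main obstacle is the case analysis in the disc lemma: because the face-cycle involves three distinct polygon sizes, the boundary of the putative disc admits several combinatorial types, and each must be separately shown to violate Euler's formula. Once that bookkeeping is done, the rest of the argument is essentially identical to the proofs of the earlier uniqueness lemmas in this section.
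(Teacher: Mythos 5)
Your proposal follows essentially the same route as the paper: fix a $\{6,12\}$-edge, force the labelling of neighbouring faces via the face-cycle condition, propagate to get paths and indexed families of faces, rule out closed paths by an Euler-characteristic count on the resulting disc (the paper does this with two specific disc lemmas, one for boundaries meeting quadrangles and $12$-gons and one for boundaries meeting quadrangles and hexagons), and then read off the isomorphism with $E_{10}$. The approach and all key steps match; only the bookkeeping of which boundary configurations occur is left slightly less explicit in your sketch than in the paper.
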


Again, to prove Lemma \ref{lemma:E10} we need the following two technical lemmas.

\begin{lemma} \label{lemma:disc4612-1}
Let $M$ be a map on the $2$-disk $\mathbb{D}^2$ whose faces are quadrangles, hexagons, $12$-gons. For a vertex $u$ of $M$ and $i=4, 6, 12$, let $n_i(u)$ be the number $i$-gons through $u$. Then $M$ can not satisfy both the following: {\rm (i)} $(n_4(u), n_6(u), n_{12}(u)) = (1, 1, 1)$ for each internal vertex $u$, {\rm (ii)} boundary $\partial \mathbb{D}^2$ contains $2k+2\ell$ vertices, among these  $2k$ vertices are in one quadrangle and one $12$-gon, and $2\ell$ vertices are in one $12$-gon only, for some $k > 0$ and $2k-2 \le \ell \le 2k+2$.
\end{lemma}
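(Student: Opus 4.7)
My plan is to derive a contradiction with the Euler characteristic of the 2-disk by counting vertices, edges, and faces. The calculation is entirely parallel to the proofs of Lemmas \ref{lemma:disc3464-1}, \ref{lemma:disc3464-2}, and \ref{lemma:disc3122}, only with different multiplicities.

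First I would set $f_0, f_1, f_2$ for the number of vertices, edges, faces, and write $n_4, n_6, n_{12}$ for the numbers of quadrangles, hexagons and $12$-gons. Let $n$ be the number of internal vertices. Then $f_0 = n + 2k + 2\ell$. From condition (i), the sum of degrees contributed by internal vertices is $3n$; the $2k$ boundary vertices lying in one quadrangle and one $12$-gon contribute $3$ each (two boundary edges of the $12$-gon plus one interior edge into the quadrangle), and the $2\ell$ boundary vertices lying in one $12$-gon only contribute $2$ each. Hence
\[
f_1 = \frac{3n + 6k + 4\ell}{2}.
\]

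Next I would compute $n_4, n_6, n_{12}$ by counting face--vertex incidences: each internal vertex lies in exactly one $4$-gon, one $6$-gon, and one $12$-gon, each of the $2k$ boundary vertices lies in one $4$-gon and one $12$-gon, and each of the $2\ell$ boundary vertices lies only in one $12$-gon. This gives $4n_4 = n + 2k$, $6n_6 = n$, and $12 n_{12} = n + 2k + 2\ell$, so
\[
f_2 = n_4 + n_6 + n_{12} = \frac{n+2k}{4} + \frac{n}{6} + \frac{n+2k+2\ell}{12} = \frac{3n + 4k + \ell}{6}.
\]

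Finally I would combine these to get
\[
f_0 - f_1 + f_2 = (n + 2k + 2\ell) - \frac{3n + 6k + 4\ell}{2} + \frac{3n + 4k + \ell}{6} = \frac{\ell - 2k}{6}.
\]
Since the Euler characteristic of $\mathbb{D}^2$ is $1$, this forces $\ell - 2k = 6$, i.e.\ $\ell = 2k+6$, which contradicts the hypothesis $\ell \le 2k+2$. This contradiction proves the lemma. There is no real obstacle here: the whole proof is a single Euler-characteristic computation, and the only thing to be careful about is correctly tallying the degrees and face-incidences for the two kinds of boundary vertex.
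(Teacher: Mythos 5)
Your proof is correct and follows the same Euler-characteristic count as the paper, with identical values $f_0 = n+2k+2\ell$, $f_1 = (3n+6k+4\ell)/2$ and $f_2 = n_4+n_6+n_{12} = (3n+4k+\ell)/6$. The only difference is the endgame: the paper first uses integrality of $n_{12}$ and $f_2$ to pin down $\ell = 2k$ and then finds $f_0-f_1+f_2 = 0 \neq 1$, whereas you compute $f_0-f_1+f_2 = (\ell-2k)/6$ directly and let $\chi(\mathbb{D}^2)=1$ force $\ell = 2k+6$, contradicting $\ell \le 2k+2$ --- a slightly cleaner finish that bypasses the divisibility discussion.
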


\begin{proof}
Let $f_0, f_1$ and $f_2$ denote the number of vertices, edges and faces of $X$ respectively. For $i= 4, 6, 12$, let $n_i$ denote the total number of $i$-gons in $X$.
Let there be $n$ internal vertices. Thus,  $f_0=n+2k+2\ell$ and $f_2=n_4+n_6+n_{12}$.

Suppose $X$ satisfies (i) and (ii). Then $n_6=n/6$, $n_4=(n+2k)/4 =(6n_6+2k)/4 = (3n_6+k)/2$ and $n_{12}=(n+2k+2\ell)/12 = (6n_6+2k+2\ell)/12 = (3n_6+k+\ell)/6=(2n_4+\ell)/6$. So, $\ell$ is even.
Now, $f_2 = n_4 + n_6 + n_{12} = (3n_6+k)/2 + n_6 +(3n_6+k+\ell)/6 = 3n_6 + (4k+\ell)/6$. So, $(4k+\ell)/6\in\mathbb{Z}$, where $\ell$ is even and $2k-2\leq \ell \leq 2k+2$.  This implies that $\ell = 2k$.
Therefore $f_0 = 6n_6+6k$, $f_2= 3n_6 + k$ and $f_1 = (3n+6k+4\ell)/2= (3n + 6k +8k)/2 = 9n_6 + 7k$. Hence  $f_0-f_1+f_2 = (6n_6+6k) -(9n_6 + 7k) + (3n_6+k) = 0$. This is not possible since the Euler characteristic of $\mathbb{D}^2$ is 1. This completes the proof.
\end{proof}

\begin{lemma} \label{lemma:disc4612-2}
Let $M$ be a map on the $2$-disk $\mathbb{D}^2$ whose faces are quadrangles, hexagons, $12$-gons. For a vertex $u$ of $M$ and $i=4, 6, 12$, let $n_i(u)$ be the number $i$-gons through $u$. Then $M$ can not satisfy both the following: {\rm (i)} $(n_4(u), n_6(u), n_{12}(u)) = (1, 1, 1)$ for each internal vertex $u$, {\rm (ii)} boundary $\partial \mathbb{D}^2$ contains $4k+2\ell$ vertices, among these  $4k$ vertices are in one quadrangle and one hexagon, and $2\ell$ vertices are in one hexagon only, for some $k > 0$ and $k-1 \le \ell \le k$.
\end{lemma}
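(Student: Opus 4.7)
\medskip

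\noindent\textbf{Proof plan for Lemma \ref{lemma:disc4612-2}.} The strategy mirrors the Euler-characteristic argument used for Lemmas \ref{lemma:disc3464-1}, \ref{lemma:disc3464-2} and \ref{lemma:disc3122}: assume $M$ satisfies both (i) and (ii), extract $n_4,n_6,n_{12}$ from vertex--face incidence counting, use the resulting integrality constraints to pin down $\ell$, then compute $f_0-f_1+f_2$ and compare with $\chi(\mathbb{D}^2)=1$.

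First I would set up the incidence counts. Let $n$ be the number of internal vertices, so $f_0=n+4k+2\ell$. By (i) each internal vertex contributes $(1,1,1)$ to $(n_4(u),n_6(u),n_{12}(u))$; by (ii) each of the $4k$ boundary vertices of the first type contributes $(1,1,0)$ and each of the $2\ell$ boundary vertices of the second type contributes $(0,1,0)$. Using the standard identity $\sum_{u}n_i(u)=i\cdot n_i$ for $i=4,6,12$ gives
\[
n_4=\frac{n+4k}{4},\qquad n_6=\frac{n+4k+2\ell}{6},\qquad n_{12}=\frac{n}{12}.
\]

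Second, I would use integrality to narrow down $\ell$. From $n_{12}\in\mathbb{Z}$ we get $12\mid n$; in particular $n\equiv 0\pmod 6$. Hence $n_6\in\mathbb{Z}$ forces $4k+2\ell\equiv 0\pmod 6$, i.e.\ $2k+\ell\equiv 0\pmod 3$. The hypothesis $k-1\le\ell\le k$ leaves only $\ell\in\{k-1,k\}$. For $\ell=k-1$, $2k+\ell=3k-1\not\equiv 0\pmod 3$, ruled out. Thus $\ell=k$.

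Third, I would compute the Euler characteristic. For the edge count, sum the vertex degrees: each internal vertex has degree $3$, each boundary type-1 vertex is in two faces and on the boundary so has degree $2+1=3$, and each boundary type-2 vertex has degree $1+1=2$. This yields $2f_1=3n+12k+4\ell$, so with $\ell=k$ we get $f_1=(3n+16k)/2$. Plugging $\ell=k$ into the formulas above gives $f_0=n+6k$ and $f_2=n_4+n_6+n_{12}=n/2+2k$ (after routine simplification over a common denominator of $12$). A direct calculation then yields $f_0-f_1+f_2=0$, contradicting $\chi(\mathbb{D}^2)=1$, and completes the proof.

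The only real subtlety is the boundary degree bookkeeping (a boundary vertex lying in $k$ faces is incident to $k+1$ edges, not $k$); once that is handled the rest is arithmetic parallel to Lemma \ref{lemma:disc4612-1}. I do not anticipate any genuine obstacle beyond this.
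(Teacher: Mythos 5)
Your proposal is correct and follows essentially the same route as the paper's proof: the same face counts $n_4=(n+4k)/4$, $n_6=(n+4k+2\ell)/6$, $n_{12}=n/12$, the same integrality argument forcing $\ell=k$, and the same Euler-characteristic computation $f_0-f_1+f_2=0$ contradicting $\chi(\mathbb{D}^2)=1$. Your explicit justification of the boundary edge count ($2f_1=3n+12k+4\ell$) is the same bookkeeping the paper uses implicitly.
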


\begin{proof}
Let $f_0, f_1$ and $f_2$ denote the number of vertices, edges and faces of $X$ respectively. For $i= 4, 6, 12$, let $n_i$ denote the total number of $i$-gons in $X$. Let there be $n$ internal vertices. Then $f_0=n+4k+2\ell$ and $f_2=n_4+n_6+n_{12}$.

Suppose $X$ satisfies (i) and (ii). Then $n_{12} = n/12, n_4 = (n+4k)/4 = (12n_{12}+4k)/4 = 3n_{12}+k$, $n_6 = (n+4k+2\ell)/6 = (12n_{12}+4k+2\ell)/6 = 2n_{12}+(2k+\ell)/3$. This implies that $\ell = k$ since $k-1 \le \ell \le k$.
Therefore, $f_0 = n+6k = 12n_{12} + 6k$, $f_2= n_4 + n_6 +n_{12} = (3n_{12} + k) + (2n_{12}+k) + n_{12} = 6n_{12} + 2k$ and $f_1= (3n+12k+4k)/2 = 18n_{12}+6k+2k$. So, $f_0-f_1+f_2 = (12n_{12}+6k) -(18n_{12}+8k) + (6n_{12}+2k) = 0$. This is not possible since the Euler characteristic of the 2-disk $\mathbb{D}^2$ is 1. This completes the proof.
\end{proof}

\begin{figure}[ht]
\tiny
\tikzstyle{ver}=[]
\tikzstyle{vert}=[circle, draw, fill=black!100, inner sep=0pt, minimum width=4pt]
\tikzstyle{vertex}=[circle, draw, fill=black!00, inner sep=0pt, minimum width=4pt]
\tikzstyle{edge} = [draw,thick,-]
\centering

\begin{tikzpicture}[scale=0.57]
\draw [xshift = -80, yshift = 8] ({1*cos(25)},{1*sin(14)}) -- ({1*cos(100)},{1*sin(14)});
%\draw [xshift = -80, yshift = -22] ({1*cos(25)},{1*sin(14)}) -- ({1*cos(100)},{1*sin(14)});

\draw [xshift = -140] ({2*cos(90)},{2*sin(90)}) -- ({2*cos(75)},{2*sin(75)}) -- ({2*cos(45)},{2*sin(45)}) -- ({2*cos(15)},{2*sin(15)});

\draw [xshift = 60, yshift = 8] ({1*cos(25)},{1*sin(14)}) -- ({1*cos(100)},{1*sin(14)});
%\draw [xshift = 60, yshift = -22] ({1*cos(25)},{1*sin(14)}) -- ({1*cos(100)},{1*sin(14)});

\draw ({2*cos(15)},{2*sin(15)}) -- ({2*cos(45)},{2*sin(45)}) -- ({2*cos(75)},{2*sin(75)}) --  ({2*cos(105)},{2*sin(105)}) -- ({2*cos(135)},{2*sin(135)}) -- ({2*cos(165)},{2*sin(165)});

\draw [xshift = -78, yshift = -22] (-1.7,2.7) -- (-1.2,3.6);
\draw [xshift = -90, yshift = 10] (-.35,1.05) -- (0.1,1.95);

\draw [xshift = 62, yshift = -22] (-1.7,2.7) -- (-1.2,3.6);
\draw [xshift = 50, yshift = 10] (-.35,1.05) -- (0.1,1.95);

\draw [xshift = 202, yshift = -22] (-1.7,2.7) -- (-1.2,3.6);
\draw [xshift = 190, yshift = 10] (-.35,1.05) -- (0.1,1.95);

\draw [xshift = 342, yshift = -22] (-1.7,2.7) -- (-1.2,3.6);
\draw [xshift = 330, yshift = 10] (-.35,1.05) -- (0.1,1.95);

\draw [xshift = 200, yshift = 8] ({1*cos(25)},{1*sin(14)}) -- ({1*cos(100)},{1*sin(14)});
%\draw [xshift = 200, yshift = -22] ({1*cos(25)},{1*sin(14)}) -- ({1*cos(100)},{1*sin(14)});

\draw [xshift = 140] ({2*cos(15)},{2*sin(15)}) -- ({2*cos(45)},{2*sin(45)}) -- ({2*cos(75)},{2*sin(75)}) --  ({2*cos(105)},{2*sin(105)}) -- ({2*cos(135)},{2*sin(135)}) -- ({2*cos(165)},{2*sin(165)});

\draw [xshift = 280] ({2*cos(15)},{2*sin(15)}) -- ({2*cos(45)},{2*sin(45)}) -- ({2*cos(75)},{2*sin(75)}) --  ({2*cos(105)},{2*sin(105)}) -- ({2*cos(135)},{2*sin(135)}) -- ({2*cos(165)},{2*sin(165)});

\draw [xshift = 0, yshift = 50] (-.5,.15) -- (-1.13,1.05);
\draw [xshift = -26, yshift = 35] (-.5,.15) -- (-1.13,1.05);

\draw [xshift = 140, yshift = 50] (-.5,.15) -- (-1.13,1.05);
\draw [xshift = 114, yshift = 35] (-.5,.15) -- (-1.13,1.05);

\draw [xshift = 280, yshift = 50] (-.5,.15) -- (-1.13,1.05);
\draw [xshift = 254, yshift = 35] (-.5,.15) -- (-1.13,1.05);

%%%%%%%%%

\draw [xshift = -70, yshift = 170] (-.6,.15) -- (-1.13,1.05);
\draw [xshift = -96, yshift = 155] (-.6,.15) -- (-1.13,1.05);

\draw [xshift = 70, yshift = 170] (-.6,.15) -- (-1.13,1.05);
\draw [xshift = 44, yshift = 155] (-.6,.15) -- (-1.13,1.05);

\draw [xshift = 210, yshift = 170] (-.6,.15) -- (-1.13,1.05);
\draw [xshift = 184, yshift = 155] (-.6,.15) -- (-1.13,1.05);

\draw [xshift = 350, yshift = 170] (-.6,.15) -- (-1.13,1.05);
\draw [xshift = 324, yshift = 155] (-.6,.15) -- (-1.13,1.05);

\draw [xshift = -72, yshift = 120] ({2*cos(15)},{2*sin(15)}) -- ({2*cos(45)},{2*sin(45)}) -- ({2*cos(75)},{2*sin(75)}) --  ({2*cos(105)},{2*sin(105)}) -- ({2*cos(135)},{2*sin(135)}) -- ({2*cos(165)},{2*sin(165)}) -- ({2*cos(195)},{2*sin(195)}) -- ({2*cos(225)},{2*sin(225)}) -- ({2*cos(255)},{2*sin(255)}) -- ({2*cos(285)},{2*sin(285)}) -- ({2*cos(315)},{2*sin(315)}) -- ({2*cos(345)},{2*sin(345)}) -- ({2*cos(15)},{2*sin(15)});

\draw [xshift = -12, yshift = 128] ({1*cos(25)},{1*sin(14)}) -- ({1*cos(100)},{1*sin(14)});
\draw [xshift = -12, yshift = 98] ({1*cos(25)},{1*sin(14)}) -- ({1*cos(100)},{1*sin(14)});

\draw [xshift = 68, yshift = 120] ({2*cos(15)},{2*sin(15)}) -- ({2*cos(45)},{2*sin(45)}) -- ({2*cos(75)},{2*sin(75)}) --  ({2*cos(105)},{2*sin(105)}) -- ({2*cos(135)},{2*sin(135)}) -- ({2*cos(165)},{2*sin(165)}) -- ({2*cos(195)},{2*sin(195)}) -- ({2*cos(225)},{2*sin(225)}) -- ({2*cos(255)},{2*sin(255)}) -- ({2*cos(285)},{2*sin(285)}) -- ({2*cos(315)},{2*sin(315)}) -- ({2*cos(345)},{2*sin(345)}) -- ({2*cos(15)},{2*sin(15)});

\draw [xshift = 128, yshift = 128] ({1*cos(25)},{1*sin(14)}) -- ({1*cos(100)},{1*sin(14)});
\draw [xshift = 128, yshift = 98] ({1*cos(25)},{1*sin(14)}) -- ({1*cos(100)},{1*sin(14)});

\draw [xshift = 208, yshift = 120] ({2*cos(15)},{2*sin(15)}) -- ({2*cos(45)},{2*sin(45)}) -- ({2*cos(75)},{2*sin(75)}) --  ({2*cos(105)},{2*sin(105)}) -- ({2*cos(135)},{2*sin(135)}) -- ({2*cos(165)},{2*sin(165)}) -- ({2*cos(195)},{2*sin(195)}) -- ({2*cos(225)},{2*sin(225)}) -- ({2*cos(255)},{2*sin(255)}) -- ({2*cos(285)},{2*sin(285)}) -- ({2*cos(315)},{2*sin(315)}) -- ({2*cos(345)},{2*sin(345)}) -- ({2*cos(15)},{2*sin(15)});

\draw [xshift = 268, yshift = 128] ({1*cos(25)},{1*sin(14)}) -- ({1*cos(100)},{1*sin(14)});
\draw [xshift = 268, yshift = 98] ({1*cos(25)},{1*sin(14)}) -- ({1*cos(100)},{1*sin(14)});

\draw [xshift = 348, yshift = 120] ({2*cos(90)},{2*sin(90)}) -- ({2*cos(105)},{2*sin(105)}) -- ({2*cos(135)},{2*sin(135)}) -- ({2*cos(165)},{2*sin(165)}) -- ({2*cos(195)},{2*sin(195)}) -- ({2*cos(225)},{2*sin(225)}) -- ({2*cos(255)},{2*sin(255)}) -- ({2*cos(270)},{2*sin(270)});

\draw [xshift = 272, yshift = 98] (-1.7,2.7) -- (-1.25,3.58);
\draw [xshift = 259, yshift = 130] (-.35,1.05) -- (0.1,1.95);

\draw [xshift = 132, yshift = 98] (-1.7,2.7) -- (-1.25,3.58);
\draw [xshift = 119, yshift = 130] (-.35,1.05) -- (0.1,1.95);

\draw [xshift = -8, yshift = 98] (-1.7,2.7) -- (-1.25,3.58);
\draw [xshift = -21, yshift = 130] (-.35,1.05) -- (0.1,1.95);

%%%%%%%%%%%%

\draw [xshift = -83, yshift = 248] ({1*cos(25)},{1*sin(14)}) -- ({1*cos(100)},{1*sin(14)});
\draw [xshift = -83, yshift = 218] ({1*cos(25)},{1*sin(14)}) -- ({1*cos(100)},{1*sin(14)});

\draw [xshift = -143, yshift = 240] ({2*cos(90)},{2*sin(90)}) -- ({2*cos(75)},{2*sin(75)}) -- ({2*cos(45)},{2*sin(45)}) -- ({2*cos(15)},{2*sin(15)}) -- ({2*cos(345)},{2*sin(345)}) -- ({2*cos(315)},{2*sin(315)}) -- ({2*cos(285)},{2*sin(285)}) -- ({2*cos(270)},{2*sin(270)});

\draw [xshift = 57, yshift = 248] ({1*cos(25)},{1*sin(14)}) -- ({1*cos(100)},{1*sin(14)});
\draw [xshift = 57, yshift = 218] ({1*cos(25)},{1*sin(14)}) -- ({1*cos(100)},{1*sin(14)});

\draw [xshift = -3, yshift = 240] ({2*cos(15)},{2*sin(15)}) -- ({2*cos(45)},{2*sin(45)}) -- ({2*cos(75)},{2*sin(75)}) --  ({2*cos(105)},{2*sin(105)}) -- ({2*cos(135)},{2*sin(135)}) -- ({2*cos(165)},{2*sin(165)}) -- ({2*cos(195)},{2*sin(195)}) -- ({2*cos(225)},{2*sin(225)}) -- ({2*cos(255)},{2*sin(255)}) -- ({2*cos(285)},{2*sin(285)}) -- ({2*cos(315)},{2*sin(315)}) -- ({2*cos(345)},{2*sin(345)}) -- ({2*cos(15)},{2*sin(15)});

\draw [xshift = 197, yshift = 248] ({1*cos(25)},{1*sin(14)}) -- ({1*cos(100)},{1*sin(14)});
\draw [xshift = 197, yshift = 218] ({1*cos(25)},{1*sin(14)}) -- ({1*cos(100)},{1*sin(14)});

%%%%%
\draw [xshift = 317, yshift = 248] ({1*cos(25)},{1*sin(14)}) -- ({1*cos(60)},{1*sin(14)});
\draw [xshift = 317, yshift = 218] ({1*cos(25)},{1*sin(14)}) -- ({1*cos(60)},{1*sin(14)});

\draw [xshift = 320, yshift = 7] ({1*cos(25)},{1*sin(14)}) -- ({1*cos(60)},{1*sin(14)});
%\draw [xshift = 320, yshift = -21] ({1*cos(25)},{1*sin(14)}) -- ({1*cos(60)},{1*sin(14)});

\draw [xshift = -153, yshift = 128] ({1*cos(25)},{1*sin(14)}) -- ({1*cos(60)},{1*sin(14)});
\draw [xshift = -153, yshift = 98] ({1*cos(25)},{1*sin(14)}) -- ({1*cos(60)},{1*sin(14)});
%%%%%%

\draw [xshift = 137, yshift = 240] ({2*cos(15)},{2*sin(15)}) -- ({2*cos(45)},{2*sin(45)}) -- ({2*cos(75)},{2*sin(75)}) --  ({2*cos(105)},{2*sin(105)}) -- ({2*cos(135)},{2*sin(135)}) -- ({2*cos(165)},{2*sin(165)}) -- ({2*cos(195)},{2*sin(195)}) -- ({2*cos(225)},{2*sin(225)}) -- ({2*cos(255)},{2*sin(255)}) -- ({2*cos(285)},{2*sin(285)}) -- ({2*cos(315)},{2*sin(315)}) -- ({2*cos(345)},{2*sin(345)}) -- ({2*cos(15)},{2*sin(15)});

\draw [xshift = 277, yshift = 240] ({2*cos(15)},{2*sin(15)}) -- ({2*cos(45)},{2*sin(45)}) -- ({2*cos(75)},{2*sin(75)}) --  ({2*cos(105)},{2*sin(105)}) -- ({2*cos(135)},{2*sin(135)}) -- ({2*cos(165)},{2*sin(165)}) -- ({2*cos(195)},{2*sin(195)}) -- ({2*cos(225)},{2*sin(225)}) -- ({2*cos(255)},{2*sin(255)}) -- ({2*cos(285)},{2*sin(285)}) -- ({2*cos(315)},{2*sin(315)}) -- ({2*cos(345)},{2*sin(345)}) -- ({2*cos(15)},{2*sin(15)});

\node[ver] () at (1.2-5,.5){$a_{-1,-1}$};
\node[ver] () at (1-5,1.3){$b_{-1,-1}$};
\node[ver] () at (.1-5,1.7){$c_{-1,-1}$};

\node[ver] () at (1.2,.5){$a_{0,-1}$};
\node[ver] () at (1,1.1){$b_{0,-1}$};
\node[ver] () at (.5,1.5){$c_{0,-1}$};
\node[ver] () at (-1.2,.5){$d_{0,-1}$};
\node[ver] () at (-1,1.1){$e_{0,-1}$};
\node[ver] () at (-.5,1.5){$f_{0,-1}$};

\node[ver] () at (1.2+5,.5){$a_{1,-1}$};
\node[ver] () at (1+5,1.1){$b_{1,-1}$};
\node[ver] () at (.5+5,1.5){$c_{1,-1}$};
\node[ver] () at (-1.2+5,.5){$d_{1,-1}$};
\node[ver] () at (-1+5,1.1){$e_{1,-1}$};
\node[ver] () at (-.5+5,1.5){$f_{1,-1}$};

\node[ver] () at (1.2+10,.5){$a_{2,-1}$};
\node[ver] () at (1+10,1.1){$b_{2,-1}$};
\node[ver] () at (.5+10,1.5){$c_{2,-1}$};
\node[ver] () at (-1.2+10,.5){$d_{2,-1}$};
\node[ver] () at (-1+10,1.1){$e_{2,-1}$};
\node[ver] () at (-.5+10,1.5){$f_{2,-1}$};

%%%
\node[ver] () at (-1.2-2.5,-.5+4){$a_{-1,0}$};
\node[ver] () at (-1-2.5,-1.1+4){$b_{-1,0}$};
\node[ver] () at (-.5-2.5,-1.4+4){$c_{-1,0}$};
\node[ver] () at (1.2-2.5,-.5+4){$d_{-1,0}$};
\node[ver] () at (1-2.5,-1.1+4){$e_{-1,0}$};
\node[ver] () at (.5-2.5,-1.4+4){$f_{-1,0}$};
\node[ver] () at (1.2-2.5,.5+4){$a_{-1,1}$};
\node[ver] () at (1-2.5,1.1+4){$b_{-1,1}$};
\node[ver] () at (.6-2.5,1.6+4){$c_{-1,1}$};
\node[ver] () at (-1.2-2.5,.5+4){$d_{-1,1}$};
\node[ver] () at (-1-2.5,1.1+4){$e_{-1,1}$};
\node[ver] () at (-.6-2.5,1.6+4){$f_{-1,1}$};

\node[ver] () at (-1.4+2.5,-.5+4.2){$a_{0,0}$};
\node[ver] () at (-1.2+2.5,-1.1+4.2){$b_{0,0}$};
\node[ver] () at (-.6+2.5,-1.6+4.2){$c_{0,0}$};
\node[ver] () at (1.2+2.5,-.5+4.2){$d_{0,0}$};
\node[ver] () at (1.1+2.5,-1.1+4.2){$e_{0,0}$};
\node[ver] () at (.5+2.5,-1.6+4.2){$f_{0,0}$};
\node[ver] () at (1.4+2.5,.5+4.2){$a_{0,1}$};
\node[ver] () at (1.1+2.5,1.1+4.2){$b_{0,1}$};
\node[ver] () at (.5+2.5,1.6+4.2){$c_{0,1}$};
\node[ver] () at (-1.4+2.5,.5+4.2){$d_{0,1}$};
\node[ver] () at (-1.2+2.5,1.1+4.2){$e_{0,1}$};
\node[ver] () at (-.6+2.5,1.6+4.2){$f_{0,1}$};

\node[ver] () at (-1.4+7.5,-.5+4.2){$a_{1,0}$};
\node[ver] () at (-1.2+7.5,-1.1+4.2){$b_{1,0}$};
\node[ver] () at (-.6+7.5,-1.6+4.2){$c_{1,0}$};
\node[ver] () at (1.2+7.5,-.5+4.2){$d_{1,0}$};
\node[ver] () at (1.1+7.5,-1.1+4.2){$e_{1,0}$};
\node[ver] () at (.5+7.5,-1.6+4.2){$f_{1,0}$};
\node[ver] () at (1.4+7.5,.5+4.2){$a_{1,1}$};
\node[ver] () at (1.1+7.5,1.1+4.2){$b_{1,1}$};
\node[ver] () at (.5+7.5,1.6+4.2){$c_{1,1}$};
\node[ver] () at (-1.4+7.5,.5+4.2){$d_{1,1}$};
\node[ver] () at (-1.2+7.5,1.1+4.2){$e_{1,1}$};
\node[ver] () at (-.6+7.5,1.6+4.2){$f_{1,1}$};

\node[ver] () at (-1.6+12.5,-.5+4.2){$a_{2,0}$};
\node[ver] () at (-1.2+12.5,-1.1+4){$b_{2,0}$};
\node[ver] () at (-.6+12.5,-1.8+4.2){$c_{2,0}$};
\node[ver] () at (-1.6+12.5,.5+4.2){$d_{2,1}$};
\node[ver] () at (-1.2+12.5,1.1+4.4){$e_{2,1}$};
\node[ver] () at (-.4+12.5,1.6+4.2){$f_{2,1}$};

%%%%%%

\node[ver] () at (2-6,-.5+8.3){$d_{-2,2}$};
\node[ver] () at (1.7-6,-1.3+8.5){$e_{-2,2}$};
\node[ver] () at (.1-5,-1.8+8.5){$f_{-2,2}$};
\node[ver] () at (1.2-5,.5+8.3){$a_{-2,3}$};
\node[ver] () at (.8-5,1.3+8.3){$b_{-2,3}$};
\node[ver] () at (.1-5,1.7+8.3){$c_{-2,3}$};

\node[ver] () at (-1.4,-.5+8.3){$a_{-1,2}$};
\node[ver] () at (-1,-1.1+8.3){$b_{-1,2}$};
\node[ver] () at (-.6,-1.6+8.5){$c_{-1,2}$};
\node[ver] () at (1.2,-.5+8.3){$d_{-1,2}$};
\node[ver] () at (.9,-1.1+8.3){$e_{-1,2}$};
\node[ver] () at (.5,-1.6+8.5){$f_{-1,2}$};
\node[ver] () at (1.2,.5+8.3){$a_{-1,3}$};
\node[ver] () at (.9,1.1+8.3){$b_{-1,3}$};
\node[ver] () at (.6,1.5+8.3){$c_{-1,3}$};
\node[ver] () at (-1.4,.5+8.3){$d_{-1,3}$};
\node[ver] () at (-1,1.1+8.3){$e_{-1,3}$};
\node[ver] () at (-.6,1.6+8.3){$f_{-1,3}$};

\node[ver] () at (-1.5+5,-.5+8.3){$a_{0,2}$};
\node[ver] () at (-1.3+5,-1.1+8.3){$b_{0,2}$};
\node[ver] () at (-.6+5,-1.6+8.5){$c_{0,2}$};
\node[ver] () at (1.2+5,-.5+8.3){$d_{0,2}$};
\node[ver] () at (.9+5,-1.1+8.3){$e_{0,2}$};
\node[ver] () at (.5+5,-1.6+8.5){$f_{0,2}$};
\node[ver] () at (1.2+5,.5+8.3){$a_{0,3}$};
\node[ver] () at (.9+5,1.1+8.3){$b_{0,3}$};
\node[ver] () at (.2+5,1.5+8.5){$c_{0,3}$};
\node[ver] () at (-1.6+5,.5+8.3){$d_{0,3}$};
\node[ver] () at (-1.2+5,1.1+8.3){$e_{0,3}$};
\node[ver] () at (-.6+5,1.6+8.4){$f_{0,3}$};

\node[ver] () at (-1.5+10,-.5+8.3){$a_{1,2}$};
\node[ver] () at (-1.3+10,-1.1+8.3){$b_{1,2}$};
\node[ver] () at (-.6+10,-1.6+8.5){$c_{1,2}$};
\node[ver] () at (1.2+10,-.5+8.3){$d_{1,2}$};
\node[ver] () at (.9+10,-1.1+8.3){$e_{1,2}$};
\node[ver] () at (.3+10,-1.6+8.5){$f_{1,2}$};
\node[ver] () at (1.2+10,.5+8.3){$a_{1,3}$};
\node[ver] () at (.9+10,1.1+8.3){$b_{1,3}$};
\node[ver] () at (.2+10,1.5+8.5){$c_{1,3}$};
\node[ver] () at (-1.6+10,.5+8.3){$d_{1,3}$};
\node[ver] () at (-1.2+10,1.1+8.3){$e_{1,3}$};
\node[ver] () at (-.6+10,1.6+8.4){$f_{1,3}$};

\node[ver] () at (12.9,.5){$Q_0$};\node[ver] () at (12.9,2.2){$P_0$};
\node[ver] () at (12.9,6.2){$Q_1$};\node[ver] () at (12.9,7.9){$P_1$};
\node[ver] () at (12.9,8.9){$Q_2$};

\node[ver] () at (1.2,2){$B_{0,0}$};\node[ver] () at (2.4,1.4){$F_{0,0}$};
\node[ver] () at (3.7,2){$C_{0,0}$};\node[ver] () at (5,2.7){$D_{0,0}$};
\node[ver] () at (6.1,2){$B_{1,0}$};\node[ver] () at (7.4,1.4){$F_{1,0}$};
\node[ver] () at (8.7,2){$C_{1,0}$};\node[ver] () at (10,2.7){$D_{1,0}$};

\node[ver] () at (-.1,4.1){$J_{0,0}$};\node[ver] () at (2.3,4.1){$A_{0,0}$};\node[ver] () at (4.8,4.1){$J_{1,0}$};\node[ver] () at (7.2,4.1){$A_{1,0}$};\node[ver] () at (9.8,4.1){$J_{2,0}$};

\node[ver] () at (2.4,7){$D_{-1,1}$};\node[ver] () at (3.7,6.4){$B_{0,1}$};
\node[ver] () at (5,5.6){$F_{0,1}$};\node[ver] () at (6.1,6.4){$C_{0,1}$};
\node[ver] () at (7.4,7){$D_{0,1}$};\node[ver] () at (8.7,6.4){$B_{1,1}$};
\node[ver] () at (9.8,5.6){$F_{1,1}$};\node[ver] () at (11,6.4){$C_{1,1}$};

\node[ver] () at (-.1,8.2){$A_{-1,1}$};%\node[ver] () at (2.3,8.1){$A_{0,0}$};
\node[ver] () at (4.8,8.2){$A_{0,1}$};%\node[ver] () at (7.2,8.1){$A_{1,0}$};
\node[ver] () at (9.8,8.2){$A_{1,1}$};

\node[ver] () at (3, -0.7){\normalsize \bf Figure 7: Part of {\boldmath $X$} of Lemma \ref{lemma:E10}};
\end{tikzpicture}
\end{figure}

\begin{proof}[Proof of Lemma \ref{lemma:E10}]
Since the type of $X$ is $[4^1,6^1,12^1]$, each 6-gon of $X$ intersects three 12-gons and three 4-gons at edges. Let $c_{0,0}f_{0,0}$ be an edge in the intersection of a 6-gon and a 12-gon. Let these faces be $F_{0,0} := C_{6}(c_{0,0}, f_{0,0}, e_{1,-1}, d_{1,-1}, a_{0,-1}, b_{0,-1})$ and $A_{0,0} := C_{12}(c_{0,0}, f_{0,0}, e_{0,0}, d_{0,0}, a_{0,1}, b_{0,1}, c_{0,1}, f_{0,1}, e_{0,1}, d_{0,1}, a_{0,0}, b_{0,0})$ for some vertices $a_{0,0}$, $a_{0,1}$, $a_{0,-1}$, $b_{0,0}$, $b_{0,1}$, $b_{0,-1}$, $c_{0,1}$, $d_{0,0}$, $d_{0,1}$, $d_{1,-1}$, $e_{1,-1}$, $e_{0,0}$, $e_{0,1}$, $f_{0,1}$ (see Fig. 7). Then the third face through $c_{0,0}$ (resp., $f_{0,0}$) must be of the form $B_{0,0} := C_4(b_{0,0}, c_{0,0}, b_{0,-1}, c_{0,-1})$ (resp., $C_{0,0} := C_{4}(f_{0,0}, e_{0,0}, f_{1,-1}, e_{1,-1})$). Then the third face through $e_{0,0}$ must be of the form $D_{0,0} := C_{6}(e_{0,0}$, $d_{0,0}$, $a_{1,0}, b_{1,0}, c_{1,-1}, f_{1,-1})$. Clearly, the third face through $d_{0,0}$ (resp. $b_{1,0}$) must be of the form $J_{1,0} : = C_{4}(d_{0,0}, a_{1,0}, d_{1,1}, a_{0,1})$ (resp., $B_{1,0} := C_4(b_{1,0}, c_{1,0}, b_{1,-1}, c_{1,-1})$). Then the third face through $c_{1,0}$ must be of the form $F_{1,0} := C_{6}(c_{1,0}, f_{1,0}, e_{2,-1}, d_{2,-1}, a_{1,-1}, b_{1,-1})$.
Continuing this way get the paths (see Fig. 7)
\begin{align*}
  Q_0 & := \cdots\mbox{-}a_{-1,-1}\mbox{-}d_{0,-1}\mbox{-} e_{0,-1}\mbox{-}f_{0,-1}\mbox{-} c_{0,-1}\mbox{-}b_{0,-1}\mbox{-}a_{0,-1}\mbox{-}d_{1,-1} \mbox{-} e_{1,-1}\mbox{-}f_{1,-1} \mbox{-}c_{1,-1}\mbox{-}b_{1,-1}\mbox{-}\cdots \\
  P_0 & := \cdots\mbox{-}c_{-1,0}\mbox{-}f_{-1,0}\mbox{-}e_{-1,0}\mbox{-}d_{-1,0}\mbox{-} a_{0,0} \mbox{-} b_{0,0}\mbox{-}c_{0,0}\mbox{-}f_{0,0}\mbox{-}e_{0,0}\mbox{-}d_{0,0} \mbox{-} a_{1,0}\mbox{-} b_{1,0}\mbox{-}c_{1,0}\mbox{-}f_{1,0}\mbox{-}e_{1,0} \mbox{-}\cdots \\
  Q_1 & := \cdots \mbox{-}b_{-1,1}\mbox{-}a_{-1,1}\mbox{-}d_{0,1}\mbox{-}e_{0,1}\mbox{-} f_{0,1}\mbox{-}c_{0,1}\mbox{-}b_{0,1}\mbox{-}a_{0,1}\mbox{-}d_{1,1} \mbox{-} e_{1,1} \mbox{-} f_{1,1}\mbox{-}c_{1,1}\mbox{-}b_{1,1}\mbox{-}a_{1,1}\mbox{-}\cdots
\end{align*}
and faces $A_{i,0}$, $B_{i,0}$, $C_{i,0}$, $D_{i,0}$, $F_{i,0}$, $J_{i,0}$, $i\in \mathbb{Z}$, where
$J_{i,j} : = C_{4}(d_{i,2j}, a_{i+1,2j}, d_{i+1,2j+1}$, $a_{i,2j+1})$,
$B_{i,j} :=$ $C_4(b_{i,2j}$, $c_{i,2j}$, $b_{i,2j-1}$, $c_{i,2j-1})$,
$C_{i,j} :=$ $C_{4}(f_{i,2j}$, $e_{i,2j}$, $f_{i+1,2j-1}$, $e_{i+1,2j-1})$,
$A_{i,j} :=$ $C_{12}(c_{i,2j}$, $f_{i,2j}$, $e_{i,2j}$, $d_{i,2j}$, $a_{i,2j+1}$, $b_{i,2j+1}$, $c_{i,2j+1}$, $f_{i,2j+1}$, $e_{i,2j+1}$, $d_{i,2j+1}$, $a_{i,2j}$, $b_{i,2j})$,
$D_{i,j} :=$ $C_{6}(e_{i,2j}$, $d_{i,2j}$, $a_{i+1,2j}$, $b_{i+1,2j}$, $c_{i+1,2j-1}$, $f_{i+1, 2j-1})$,
$F_{i,j} :=$ $C_{6}(c_{i,2j}$, $f_{i,2j}$, $e_{i+1,2j-1}$, $d_{i+1,2j-1}$, $a_{i,2j-1}$, $b_{i,2j-1})$.

\medskip

\noindent {\bf Claim.} $P_0$, $Q_0$, $Q_1$ are infinite paths.

\smallskip

If $P_0$ is not a path then we get a cycle (as a subgraph of $P_0$). This gives (by the similar arguments as in the proof of Lemma \ref{lemma:E8}) a map $M$ on the disc $\mathbb{D}^2$ which satisfies either properties (i) and (ii) of Lemma \ref{lemma:disc4612-1} or properties (i) and (ii) of Lemma \ref{lemma:disc4612-2} for some $k, \ell$. But, this is not possible by Lemmas \ref{lemma:disc4612-1}, \ref{lemma:disc4612-2}. Thus, $P_0$ is an infinite path. Similarly, $Q_0$ and $Q_1$ are infinite paths. This proves the claim.

If we start from the edge $c_{0,2}f_{0,2}$ in place of $c_{0,0}f_{0,0}$, we get faces
$C_{12}(c_{0,2}$, $f_{0,2}$, $e_{0,2}$, $d_{0,2}$, $a_{0,3}$, $b_{0,3}$, $c_{0,3}$, $f_{0,3}$, $e_{0,3}$, $d_{0,3}$, $a_{0,2}$, $b_{0,2})$, $C_4(b_{0,2}, c_{0,2}, b_{0,1}, c_{0,1})$, $C_{4}(f_{0,2}, e_{0,2}, f_{1,1}, e_{1,1})$ (in place of $A_{0,0}, B_{0,0}$, $C_{0,0}$ respectively),  for some vertices $a_{0,2}$, $a_{0,3}$, $b_{0,2}$, $b_{0,3}$, $c_{0,3}$, $d_{0,2}$, $d_{0,3}$, $e_{0,2}$, $e_{0,3}$, $f_{0,3}$ (see Fig. 7). Then, by the same method as above, we get faces $A_{i,1}$, $B_{i,1}$, $C_{i,1}$, $D_{i,1}$, $F_{i,1}$, $J_{i,1}$, $i\in \mathbb{Z}$. Continuing this way we faces
$A_{i,j}$, $B_{i,j}$, $C_{i,j}$, $D_{i,j}$, $F_{i,j}$, $J_{i,j}$, for $i, j\in \mathbb{Z}$, of $X$.
Then the mapping $\varphi \colon X \mapsto E_{10}$, given by $\varphi(a_{i,j}) = u_{i,j}$,  $\varphi(b_{i,j}) = v_{i,j}$, $\varphi(c_{i,j}) = w_{i,j}$, $\varphi(d_{i,j}) = x_{i,j}$,  $\varphi(e_{i,j}) = y_{i,j}$, $\varphi(c_{i,j}) = z_{i,j}$, is an isomorphism.
\end{proof}

\begin{lemma} \label{lemma:E11}
Let $E_{11}$ be as in Example $\ref{exam:plane}$.
Let $X$ be a semi-equivelar map on the plane. If the type of $X$ is $[4^1, 8^2]$ then $X\cong E_{11}$.
\end{lemma}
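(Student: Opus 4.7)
The plan is to adapt the constructive template of Lemmas \ref{lemma:E8}, \ref{lemma:E9} and \ref{lemma:E10}. First I pick an edge $v_{0,0}v_{1,0}$ of $X$ shared by two octagons (such an edge exists because the face-cycle $[4^1,8^2]$ at any vertex forces a consecutive $8,8$ pair). I name these octagons $A_{0,0}$ and $A_{0,-1}$ and label their remaining six vertices each in the only orientation-consistent way to get $v_{i,0}$ and $v_{i,-1}$ for $0 \le i \le 7$. Since every vertex has face-cycle $[4^1, 8^2]$, the third face at each vertex of $A_{0,0}$ is forced to be either another octagon (sharing the "outer" 8-8 edge) or a quadrangle, and by walking around $A_{0,0}$ one sees these must alternate. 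This fixes the adjacent octagons $A_{1,0}, A_{-1,0}, \ldots$ on one side and the 4-gons $B_{i,0}$ between consecutive octagons.

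Continuing along the row, I obtain an infinite horizontal path $P_0$ joining the outer vertices of the octagons $A_{i,0}$ and $B_{i,0}$, and similar paths $P_{-1}$ and $P_1$ below and above, provided none of them closes into a cycle. Building downward from the edge shared by $A_{0,-1}$ with the row below, and upward above $A_{0,0}$, determines further 4-gons connecting consecutive rows and locks in the positions of the next rows of octagons. Once all three paths are shown to be infinite, the faces $A_{i,j}$ and $B_{i,j}$ for $i,j \in \mathbb{Z}$ are determined by iteration, and the map $v_{i,j} \mapsto v_{i,j}$ of $E_{11}$ furnishes the isomorphism $\varphi \colon X \to E_{11}$.

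The crucial technical ingredient, mirroring Lemmas \ref{lemma:disc3464-1}, \ref{lemma:disc3464-2}, \ref{lemma:disc3122}, \ref{lemma:disc4612-1} and \ref{lemma:disc4612-2}, is a disc-Euler lemma: no map on $\mathbb{D}^2$ whose faces are $4$-gons and $8$-gons can satisfy simultaneously (i) $(n_4(u), n_8(u)) = (1,2)$ for every internal vertex $u$, and (ii) a boundary configuration of the shapes that would arise if $P_0$ closed into a cycle (the boundary would contain $2k$ vertices on one $8$-gon and one $4$-gon and $2\ell$ vertices on one $8$-gon only, for appropriate ranges of $k, \ell$). The counting proceeds by writing $n_8 = (n + c_8)/8$ and $n_4 = (n + c_4)/4$ in terms of the internal vertex count $n$ and boundary contributions, then substituting into $f_0 - f_1 + f_2 = 1$; the resulting Diophantine relation will admit no solution for any admissible pair $(k,\ell)$.

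The main obstacle is identifying correctly the handful of boundary configurations that can occur when $P_0$ closes (since at a boundary vertex either a square or an octagon may be missing from the face-cycle), and verifying that the Euler computation rules out every one of them. Once this disc lemma is in place, the rest of the argument is a straightforward iteration analogous to the construction in the proof of Lemma \ref{lemma:E10}, and the explicit relabelling against the coordinates of $E_{11}$ in Fig.~1(k) concludes the proof.
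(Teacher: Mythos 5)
Your proposal is correct and takes essentially the same route as the paper's proof: start from an edge lying in two octagons, propagate the forced alternation of octagons and $4$-gons along horizontal rows, rule out a closed row by an Euler-characteristic count on the $2$-disk, and then assemble the isomorphism to $E_{11}$ row by row. The paper realizes your ``disc-Euler lemma'' as two separate statements (Lemmas \ref{lemma:disc482-1} and \ref{lemma:disc482-2}), one for each type of boundary configuration a closed row could produce --- with boundary data ($k$ vertices in two $8$-gons and $2\ell$ in one $8$-gon only, respectively $k$ vertices in one $4$-gon and $2\ell$ in one $4$-gon and one $8$-gon) differing slightly from the single hybrid configuration you wrote down, but this is exactly the case analysis you flagged as the remaining obstacle, and in each case integrality forces $\ell=k$ and then $f_0-f_1+f_2=0\neq 1$.
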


To prove Lemma \ref{lemma:E11}, we need the following two technical lemmas.

\begin{lemma} \label{lemma:disc482-1}
Let $M$ be a map on the $2$-disk $\mathbb{D}^2$ whose faces are quadrangles and $8$-gons. For a vertex $u$ of $M$ and $i=4, 8$, let $n_i(u)$ be the number of $i$-gons through $u$.  Then $M$ can not satisfy both the following: {\rm (i)} $(n_4(u), n_{8}(u)) = (1, 2)$ for each internal vertex $u$, {\rm (ii)} boundary $\partial \mathbb{D}^2$ has $k+2\ell$ vertices in which $k$ vertices are in two $8$-gons, and $2\ell$ vertices which are in one $8$-gon only, for some $k > 0$ and $k \le \ell \le k+3$.
\end{lemma}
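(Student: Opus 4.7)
The plan is to mimic the Euler characteristic strategy used in the preceding technical lemmas (\ref{lemma:disc3464-1}--\ref{lemma:disc4612-2}): count $f_0$, $f_1$ and $f_2$ of the hypothetical map $M$ in terms of the parameters $n$ (the number of interior vertices), $k$, and $\ell$, and derive a contradiction with $\chi(\mathbb{D}^2) = 1$. As before, integrality constraints on the face counts will first cut down the admissible pairs $(k,\ell)$, and then a direct alternating-sum calculation will rule out what remains.

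For the counts, let $n_4, n_8$ denote the total number of quadrangles and $8$-gons in $M$. Double counting vertex-face incidences gives, from the quadrangles, $n = 4 n_4$, and from the $8$-gons, $2n + 2k + 2\ell = 8 n_8$, so
\[
n_4 = \frac{n}{4}, \qquad n_8 = \frac{n+k+\ell}{4}.
\]
For the edges I check vertex degrees case by case: an interior vertex has three incident faces and hence degree $3$; a boundary vertex in two $8$-gons has one shared interior edge and two boundary edges, so degree $3$; and a boundary vertex in a single $8$-gon has its two octagon edges on $\partial \mathbb{D}^2$ and no interior edge, so degree $2$. Summing over all vertices,
\[
2 f_1 = 3n + 3k + 4\ell, \qquad f_0 = n + k + 2\ell, \qquad f_2 = n_4 + n_8 = \frac{2n + k + \ell}{4}.
\]

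Plugging these into $\chi(\mathbb{D}^2) = f_0 - f_1 + f_2$ and clearing denominators, the $n$-terms should cancel completely and leave
\[
f_0 - f_1 + f_2 \; = \; \frac{4(n+k+2\ell) - 2(3n+3k+4\ell) + (2n+k+\ell)}{4} \; = \; \frac{\ell - k}{4}.
\]
Setting this equal to $1$ would force $\ell - k = 4$, which contradicts the hypothesis $k \le \ell \le k+3$, completing the proof. The only real subtlety is getting the degree count at the two types of boundary vertices right; after that, the argument is a one-line algebraic simplification, exactly parallel in spirit to Lemmas \ref{lemma:disc3464-1}--\ref{lemma:disc4612-2}.
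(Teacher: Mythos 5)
Your proposal is correct and follows essentially the same route as the paper: identical bookkeeping ($f_0=n+k+2\ell$, $2f_1=3n+3k+4\ell$ from the degree-$3$/degree-$3$/degree-$2$ split, $n_4=n/4$, $n_8=(n+k+\ell)/4$) fed into the Euler characteristic of the disk. The only difference is in the endgame, where you compute $f_0-f_1+f_2=(\ell-k)/4$ directly and contradict $k\le\ell\le k+3$, whereas the paper first uses integrality of $n_4$ and $n_8$ to force $\ell=k$ and then shows $f_0-f_1+f_2=0$; your version is a mild streamlining of the same argument.
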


\begin{proof}
Let $f_0, f_1$ and $f_2$ be the number of vertices, edges and faces of $M$ respectively. For $i= 4, 8$, let $n_i$ be the total number of $i$-gons in $M$.
Let there be $n$ internal vertices. Then $f_0=n+k+2\ell$   and $f_1= (3\times n+3\times k+2\times 2\ell)/2= 2\ell+3(n+k)/2$. So, $n+k$ is even.

Suppose $M$ satisfies (i) and (ii).
Then $n_4=n/4$. So, $n$ is even and (since $n+k$ is even) hence $k$ is even, say $k=2r$.
Now, $n_8=(2n+2k+2\ell)/8= (n+k+\ell)/4$. These imply that $\ell$ is also even and hence $\ell = k$ or $k+2$.
If $\ell = k+2$ then $n_8 = n/4 + (2k+2)/4 = n_4 + r +1/2\not\in\mathbb{Z}$, a contradiction. Thus, $\ell = k$. Therefore $f_0 = n+3k$,  $f_1= (3n+7k)/2$
and $f_2= n_4+n_8 = n/4+ (n+2k)/4= (n+k)/2$.
These imply, $f_0-f_1+f_2 = 0$. This is not possible since the Euler characteristic of the 2-disk $\mathbb{D}^2$ is 1. This completes the proof.
\end{proof}

%%%%%%%%%%%%%%%%%%%

\begin{lemma} \label{lemma:disc482-2}
Let $M$ be a map on the $2$-disk $\mathbb{D}^2$ whose faces are quadrangles and $8$-gons. For a vertex $u$ of $M$ and $i=4,8$, let $n_i(u)$ be the number of $i$-gons through $u$. Then $M$ can not satisfy both the following: {\rm (i)} $(n_4(u), n_{8}(u)) = (1, 2)$ for each internal vertex $u$, {\rm (ii)} boundary $\partial \mathbb{D}^2$ has $k+2\ell$ vertices in which $k$ vertices are in one $4$-gon, and $2\ell$ vertices are in one $4$-gon and $8$-gon, for some $k > 0$ and $k \le \ell \le k+2$.
\end{lemma}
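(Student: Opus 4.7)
This lemma follows the same structural blueprint as the five preceding disk lemmas (\ref{lemma:disc3464-1}, \ref{lemma:disc3464-2}, \ref{lemma:disc3122}, \ref{lemma:disc4612-1}, \ref{lemma:disc4612-2}): assume such an $M$ exists, compute $f_0, f_1, f_2$ in terms of $n, k, \ell$, and derive a contradiction from $\chi(\mathbb{D}^2) = f_0 - f_1 + f_2 = 1$.

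The plan is to set up three double-counts. Let $n$ be the number of interior vertices and write $n_4, n_8$ for the total number of $4$-gons and $8$-gons. Summing $n_4(u)$ over all vertices gives $4 n_4 = n + k + 2\ell$ (every interior vertex contributes $1$, every type-$k$ boundary vertex contributes $1$, every type-$2\ell$ boundary vertex contributes $1$), and summing $n_8(u)$ gives $8 n_8 = 2n + 2\ell$ (type-$k$ vertices contribute nothing, type-$2\ell$ vertices contribute $1$). The divisibility constraints alone already force $k + \ell \equiv 0 \pmod 4$, which combined with $k \le \ell \le k+2$ rules out $\ell = k+1$ but does not immediately finish the proof.

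Next, I would compute $f_1$ by summing vertex-degrees: each interior vertex has degree $3$ (the face-cycle has three faces, hence three edges), each type-$k$ boundary vertex has degree $2$ (one face on the boundary contributes two edges), and each type-$2\ell$ boundary vertex has degree $3$. This gives $2 f_1 = 3n + 2k + 6\ell$. Together with $f_0 = n + k + 2\ell$ and $f_2 = n_4 + n_8 = (n + k + 2\ell)/4 + (n + \ell)/4 = (2n + k + 3\ell)/4$, a short algebraic simplification yields
\begin{equation*}
f_0 - f_1 + f_2 \;=\; (n + k + 2\ell) - \tfrac{3n + 2k + 6\ell}{2} + \tfrac{2n + k + 3\ell}{4} \;=\; \tfrac{k - \ell}{4}.
\end{equation*}

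Setting this equal to $1$ (the Euler characteristic of the $2$-disk) gives $\ell = k - 4$, which directly contradicts the hypothesis $k \le \ell$ (in fact this contradicts $k \le \ell \le k+2$ very strongly). Hence no such $M$ exists. There is no genuine obstacle in the argument; the only point requiring care is bookkeeping the boundary contributions correctly, in particular that boundary vertices of degree $d$ contribute $d-1$ faces (since the face-cycle becomes a path), so the face-count from $n_4(u) + n_8(u)$ matches the correct degree when computing $f_1$.
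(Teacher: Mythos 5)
Your proposal is correct and follows essentially the same Euler-characteristic double-counting argument as the paper: identical counts $4n_4=n+k+2\ell$, $8n_8=2n+2\ell$, $2f_1=3n+2k+6\ell$. The only difference is the endgame — the paper first uses divisibility to force $\ell=k$ and then finds $\chi=0\neq 1$, whereas you compute $\chi=(k-\ell)/4$ directly and contradict $k\le\ell$; your route is marginally cleaner but not a genuinely different method.
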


\begin{proof}
Let $f_0, f_1$ and $f_2$ denote the number of vertices, edges and faces of $M$ respectively. For $i=4, 8$, let $n_i$ be the total number of $i$-gons in $M$. Let there be $n$ internal vertices. Then $f_0=n+k+2\ell$ and $f_1=(3\times n+2\times k+ 3\times 2\ell)/2$. So, $n$ is even, say $n=2n^{\prime}$.

Suppose $M$ satisfies (i) and (ii).
Then $n_4=(n+k+2\ell)/4= (2n^{\prime}+k+2\ell)/4$ and $n_8=(2n+2\ell)/8 = (2n^{\prime}+\ell)/4$.
Since $n_4, n_8\in \mathbb{Z}$, it follows that both $k$ and $\ell$ are even. Let $k=2k^{\prime}$ and $\ell = 2\ell^{\prime}$.
Since $k \le \ell \le k+2$, it follows that $\ell = k, k+2$.
If $\ell = k+2$, then $n_4 - n_8 =  (k+\ell)/4 = (2k+2)/4 = k^{\prime}+1/2 \not\in\mathbb{Z}$, a contradiction.  So, $\ell = k$.
Then $f_0 = n+3k$, $f_1= (3n+8k)/2$ and $f_2=n_4+n_8=(n+3k)/4+ (2n+2k)/8=(n+2k)/2$. These imply, $f_0-f_1+f_2 = 0$. This is not possible since the Euler characteristic of the 2-disk $\mathbb{D}^2$ is 1. This completes the proof.
\end{proof}

\begin{figure}[ht]
\tiny
\tikzstyle{ver}=[]
\tikzstyle{vert}=[circle, draw, fill=black!100, inner sep=0pt, minimum width=4pt]
\tikzstyle{vertex}=[circle, draw, fill=black!00, inner sep=0pt, minimum width=4pt]
\tikzstyle{edge} = [draw,thick,-]
\centering

\begin{tikzpicture}[scale=0.45]

\draw ({-3.7+2*cos(22.5)}, {2*sin(22.5)}) -- ({-3.7+2*cos(67.5)}, {2*sin(67.5)}) -- ({-5.7+2*cos(22.5)}, {2*sin(67.5)});

\draw ({2*cos(22.5)}, {2*sin(22.5)}) -- ({2*cos(67.5)}, {2*sin(67.5)}) -- ({2*cos(112.5)}, {2*sin(112.5)}) -- ({2*cos(157.5)}, {2*sin(157.5)});

\draw ({3.7+2*cos(22.5)}, {2*sin(22.5)}) -- ({3.7+2*cos(67.5)}, {2*sin(67.5)}) -- ({3.7+2*cos(112.5)}, {2*sin(112.5)}) -- ({3.7+2*cos(157.5)}, {2*sin(157.5)});

\draw ({7.4+2*cos(22.5)}, {2*sin(22.5)}) -- ({7.4+2*cos(67.5)}, {2*sin(67.5)}) -- ({7.4+2*cos(112.5)}, {2*sin(112.5)}) -- ({7.4+2*cos(157.5)}, {2*sin(157.5)});

\draw ({10.4+2*cos(67.5)}, {2*sin(67.5)}) -- ({11.1+2*cos(112.5)}, {2*sin(112.5)}) -- ({11.1+2*cos(157.5)}, {2*sin(157.5)});

\draw [fill = lightgray] ({11.1+2*cos(157.5)}, {2*sin(157.5)}) -- ({7.4+2*cos(67.5)}, {2*sin(67.5)}) -- (9.2,3) -- ({11.1+2*cos(112.5)}, {2*sin(112.5)}) -- ({11.1+2*cos(157.5)}, {2*sin(157.5)});

\draw [xshift = -105.5, fill = lightgray] ({11.1+2*cos(157.5)}, {2*sin(157.5)}) -- ({7.4+2*cos(67.5)}, {2*sin(67.5)}) -- (9.2,3) -- ({11.1+2*cos(112.5)}, {2*sin(112.5)}) -- ({11.1+2*cos(157.5)}, {2*sin(157.5)});

\draw [xshift = -211, fill = lightgray] ({11.1+2*cos(157.5)}, {2*sin(157.5)}) -- ({7.4+2*cos(67.5)}, {2*sin(67.5)}) -- (9.2,3) -- ({11.1+2*cos(112.5)}, {2*sin(112.5)}) -- ({11.1+2*cos(157.5)}, {2*sin(157.5)});

\draw [xshift = -316.5, fill = lightgray] ({11.1+2*cos(157.5)}, {2*sin(157.5)}) -- ({7.4+2*cos(67.5)}, {2*sin(67.5)}) -- (9.2,3) -- ({11.1+2*cos(112.5)}, {2*sin(112.5)}) -- ({11.1+2*cos(157.5)}, {2*sin(157.5)});

%%%%

\draw [yshift = 106,fill = lightgray] ({-3+2*cos(247.5)}, {2*sin(247.5)}) -- ({-3.7+2*cos(292.5)}, {2*sin(292.5)}) -- ({-3.7+2*cos(337.5)}, {2*sin(337.5)}) -- ({-3.7+2*cos(22.5)}, {2*sin(22.5)}) -- ({-3.7+2*cos(67.5)}, {2*sin(67.5)}) -- ({-5.7+2*cos(22.5)}, {2*sin(67.5)});

\draw [yshift = 106,fill = lightgray] ({2*cos(22.5)}, {2*sin(22.5)}) -- ({2*cos(67.5)}, {2*sin(67.5)}) -- ({2*cos(112.5)}, {2*sin(112.5)}) -- ({2*cos(157.5)}, {2*sin(157.5)}) -- ({2*cos(202.5)}, {2*sin(202.5)}) -- ({2*cos(247.5)}, {2*sin(247.5)}) -- ({2*cos(292.5)}, {2*sin(292.5)}) -- ({2*cos(337.5)}, {2*sin(337.5)}) -- ({2*cos(22.5)}, {2*sin(22.5)});

\draw [yshift = 106,fill = lightgray] ({3.7+2*cos(22.5)}, {2*sin(22.5)}) -- ({3.7+2*cos(67.5)}, {2*sin(67.5)}) -- ({3.7+2*cos(112.5)}, {2*sin(112.5)}) -- ({3.7+2*cos(157.5)}, {2*sin(157.5)}) -- ({3.7+2*cos(202.5)}, {2*sin(202.5)}) -- ({3.7+2*cos(247.5)}, {2*sin(247.5)}) -- ({3.7+2*cos(292.5)}, {2*sin(292.5)}) -- ({3.7+2*cos(337.5)}, {2*sin(337.5)}) -- ({3.7+2*cos(22.5)}, {2*sin(22.5)});

\draw [yshift = 106,fill = lightgray] ({7.4+2*cos(22.5)}, {2*sin(22.5)}) -- ({7.4+2*cos(67.5)}, {2*sin(67.5)}) -- ({7.4+2*cos(112.5)}, {2*sin(112.5)}) -- ({7.4+2*cos(157.5)}, {2*sin(157.5)}) -- ({7.4+2*cos(202.5)}, {2*sin(202.5)}) -- ({7.4+2*cos(247.5)}, {2*sin(247.5)}) -- ({7.4+2*cos(292.5)}, {2*sin(292.5)}) -- ({7.4+2*cos(337.5)}, {2*sin(337.5)}) -- ({7.4+2*cos(22.5)}, {2*sin(22.5)});

\draw [yshift = 106,fill = lightgray] ({10.4+2*cos(67.5)}, {2*sin(67.5)}) -- ({11.1+2*cos(112.5)}, {2*sin(112.5)}) -- ({11.1+2*cos(157.5)}, {2*sin(157.5)}) -- ({11.1+2*cos(202.5)}, {2*sin(202.5)}) -- ({11.1+2*cos(247.5)}, {2*sin(247.5)}) -- ({10.4+2*cos(292.5)}, {2*sin(292.5)});
%\node[ver] () at (21,4){\scriptsize $u_{-1,-1}$};

%%%
\draw [yshift = 212] ({-3+2*cos(247.5)}, {2*sin(247.5)}) -- ({-3.7+2*cos(292.5)}, {2*sin(292.5)}) -- ({-3.7+2*cos(337.5)}, {2*sin(337.5)}) -- ({-3.7+2*cos(22.5)}, {2*sin(22.5)}) -- ({-3.7+2*cos(67.5)}, {2*sin(67.5)}) -- ({-5.7+2*cos(22.5)}, {2*sin(67.5)});

\draw [yshift = 212] ({2*cos(22.5)}, {2*sin(22.5)}) -- ({2*cos(67.5)}, {2*sin(67.5)}) -- ({2*cos(112.5)}, {2*sin(112.5)}) -- ({2*cos(157.5)}, {2*sin(157.5)}) -- ({2*cos(202.5)}, {2*sin(202.5)}) -- ({2*cos(247.5)}, {2*sin(247.5)}) -- ({2*cos(292.5)}, {2*sin(292.5)}) -- ({2*cos(337.5)}, {2*sin(337.5)}) -- ({2*cos(22.5)}, {2*sin(22.5)});

\draw [yshift = 212] ({3.7+2*cos(22.5)}, {2*sin(22.5)}) -- ({3.7+2*cos(67.5)}, {2*sin(67.5)}) -- ({3.7+2*cos(112.5)}, {2*sin(112.5)}) -- ({3.7+2*cos(157.5)}, {2*sin(157.5)}) -- ({3.7+2*cos(202.5)}, {2*sin(202.5)}) -- ({3.7+2*cos(247.5)}, {2*sin(247.5)}) -- ({3.7+2*cos(292.5)}, {2*sin(292.5)}) -- ({3.7+2*cos(337.5)}, {2*sin(337.5)}) -- ({3.7+2*cos(22.5)}, {2*sin(22.5)});

\draw [yshift = 212] ({7.4+2*cos(22.5)}, {2*sin(22.5)}) -- ({7.4+2*cos(67.5)}, {2*sin(67.5)}) -- ({7.4+2*cos(112.5)}, {2*sin(112.5)}) -- ({7.4+2*cos(157.5)}, {2*sin(157.5)}) -- ({7.4+2*cos(202.5)}, {2*sin(202.5)}) -- ({7.4+2*cos(247.5)}, {2*sin(247.5)}) -- ({7.4+2*cos(292.5)}, {2*sin(292.5)}) -- ({7.4+2*cos(337.5)}, {2*sin(337.5)}) -- ({7.4+2*cos(22.5)}, {2*sin(22.5)});

\draw [yshift = 212] ({10.4+2*cos(67.5)}, {2*sin(67.5)}) -- ({11.1+2*cos(112.5)}, {2*sin(112.5)}) -- ({11.1+2*cos(157.5)}, {2*sin(157.5)}) -- ({11.1+2*cos(202.5)}, {2*sin(202.5)}) -- ({11.1+2*cos(247.5)}, {2*sin(247.5)}) -- ({10.4+2*cos(292.5)}, {2*sin(292.5)});

%%%%%%%%%%%%

\draw ({3.7+2*cos(22.5)}, {2*sin(22.5)}) -- ({3.7+2*cos(67.5)}, {2*sin(67.5)}) -- ({3.7+2*cos(112.5)}, {2*sin(112.5)}) -- ({3.7+2*cos(157.5)}, {2*sin(157.5)}) -- ({3.7+2*cos(202.5)}, {2*sin(202.5)}) -- ({3.7+2*cos(247.5)}, {2*sin(247.5)}) -- ({3.7+2*cos(292.5)}, {2*sin(292.5)}) -- ({3.7+2*cos(337.5)}, {2*sin(337.5)}) -- ({3.7+2*cos(22.5)}, {2*sin(22.5)});

%%%%%%%%%%%%%%

\node[ver] () at (-3.3,-1.5+3){\scriptsize $b_{-3,0}$};
\node[ver] () at (-.2,-1.5+3.6){$b_{-2,0}$};
\node[ver] () at (.5,-1.5+3){$b_{-1,0}$};
\node[ver] () at (3.4,-1.5+3.6){$b_{0,0}$}; \node[ver] () at (3.4,-1.4){$b_{0,-1}$};
\node[ver] () at (4.1,-1.5+3){$b_{1,0}$}; \node[ver] () at (5.8,-1.6){$b_{1,-1}$};
\node[ver] () at (7.1,-1.5+3.6){$b_{2,0}$};
\node[ver] () at (7.9,-1.5+3){$b_{3,0}$};
\node[ver] () at (-.6+11.2,-1.5+3.6){$b_{4,0}$};

\node[ver] () at (-3.3,5.9){$b_{-3,1}$};
\node[ver] () at (-.2,1.5+3.7){$b_{-2,1}$};
\node[ver] () at (.4,5.9){$b_{-1,1}$};
\node[ver] () at (3.4,1.5+3.7){$b_{0,1}$};
\node[ver] () at (.4+3.7,5.9){$b_{1,1}$};
\node[ver] () at (7.1,1.5+3.7){$b_{2,1}$};
\node[ver] () at (.4+7.4,5.9){$b_{3,1}$};
\node[ver] () at (-.5+11.1,1.5+3.7){$b_{4,1}$};

\node[ver] () at (-2.7,9.6){$b_{-3,2}$};
\node[ver] () at (-.4,9.6){$b_{-2,2}$};
\node[ver] () at (1.2,9.6){$b_{-1,2}$};
\node[ver] () at (3.4,9.6){$b_{0,2}$};
\node[ver] () at (4.4,9.6){$b_{1,2}$};
\node[ver] () at (-.5+7.4,9.6){$b_{2,2}$};
\node[ver] () at (8.2,9.6){$b_{3,2}$};
\node[ver] () at (-.5+11.1,9.6){$b_{4,2}$};

%%%%%%%%%%%%%%%

%\node[ver] () at (-1+10.8,3+3.7+3.7){$a_{2,4}$};
\node[ver] () at (-1+10.8,.8+3.6+3.7){$a_{2,3}$};
\node[ver] () at (-1+10.8,3+3.7){$a_{2,2}$};
\node[ver] () at (-1+10.8,.8+3.6){$a_{2,1}$};
\node[ver] () at (-1+10.8,3){$a_{2,0}$};
\node[ver] () at (-1+11.2,.8+3.6-3.7){$a_{2,-1}$};
%\node[ver] () at (-1+11,-3.7+3){$a_{2,-2}$};

%\node[ver] () at (-1+7.2,3+3.7+3.7){$a_{1,4}$};
\node[ver] () at (-1+7.2,.8+3.6+3.7){$a_{1,3}$};
\node[ver] () at (-1+7.2,3+3.7){$a_{1,2}$};
\node[ver] () at (-1+7.2,.8+3.6){$a_{1,1}$};
\node[ver] () at (-1+7.2,3){$a_{1,0}$};\node[ver] () at (-1+7.4,-.7){$a_{1,-2}$};
\node[ver] () at (-1+7.4,.8+3.6-3.7){$a_{1,-1}$};
%\node[ver] () at (-1+7.2,-3.7+3){$a_{1,-2}$};
%\node[ver] () at (-1+7.4,.8+3.6-3.7-3.7){$a_{1,-3}$};

%\node[ver] () at (-1+3.6,3+3.7+3.7){$a_{0,4}$};
\node[ver] () at (-1+3.6,.8+3.6+3.7){$a_{0,3}$};
\node[ver] () at (-1+3.6,3+3.7){$a_{0,2}$};
\node[ver] () at (-1+3.6,.8+3.6){$a_{0,1}$};
\node[ver] () at (-1+3.6,3){$a_{0,0}$};\node[ver] () at (-1+3.6,-.7){$a_{0,-2}$};
\node[ver] () at (-1+3.7,.8+3.6-3.7){$a_{0,-1}$};
%\node[ver] () at (-1+3.6,-3.7+3){$a_{0,-2}$};
%\node[ver] () at (-1+3.8,.8+3.6-3.7-3.7){$a_{0,-3}$};

%\node[ver] () at (-1,3+3.7+3.7){$a_{-2,4}$};
\node[ver] () at (-1,.8+3.6+3.7){$a_{-2,3}$};
\node[ver] () at (-1,3+3.7){$a_{-2,2}$};
\node[ver] () at (-1,.8+3.6){$a_{-2,1}$};
\node[ver] () at (-1,3){$a_{-2,0}$};
\node[ver] () at (-.8,.8+3.6-3.7){$a_{-2,-1}$};
%\node[ver] () at (-1,-3.7+3){$a_{-2,-2}$};
%\node[ver] () at (-1+.2,.8+3.6-3.7-3.7){$a_{-2,-3}$};

\node[ver] () at (12,9.3){$P_2$};\node[ver] () at (12,5.5){$P_1$};\node[ver] () at (12,1.8){$P_0$};

\node[ver] () at (4,7.3){$A_{0,1}$};\node[ver] () at (7.5,7.3){$A_{1,1}$};

\node[ver] () at (4,3.5){$A_{0,0}$};\node[ver] () at (7.5,3.5){$A_{1,0}$};\node[ver] () at (.5,3.5){$A_{-1,0}$};\node[ver] () at (11,3.5){$A_{2,0}$};\node[ver] () at (4,-.2){$A_{0,-1}$};

\node[ver] () at (2,1.7){$B_{0,0}$};\node[ver] () at (5.7,1.7){$B_{1,0}$};\node[ver] () at (9.3,1.7){$B_{2,0}$};

\node[ver] () at (2,5.6){$B_{0,1}$};\node[ver] () at (5.7,5.6){$B_{1,1}$};\node[ver] () at (9.3,5.6){$B_{2,1}$};

\node[ver] () at (4, -3.2){\normalsize \bf Figure 8: Part of {\boldmath $X$} of Lemma \ref{lemma:E11}};

%\end{scope}
\end{tikzpicture}
\end{figure}

\vspace{-5mm}

\begin{proof}[Proof of Lemma \ref{lemma:E11}]
Let $b_{0,0}b_{1,0}$ be an edge of $X$ in two 8-gons. Assume that these 8-gons are $A_{0,-1} :=$ $C_{8}(b_{0,0}, b_{1,0}, a_{1,-1}, a_{1,-2}, b_{1,-1},b_{0,-1}, a_{0,-2}, a_{0,-1})$ and $A_{0,0} :=$ $C_{8}(b_{0,0}$, $b_{1,0}$, $a_{1,0}$, $a_{1,1}$, $b_{1,1}$, $b_{0,1}$, $a_{0,1}$, $a_{0,0,})$, for some vertices $a_{0,0}$, $a_{1,0}$, $a_{0,1}$, $a_{1,1}$, $a_{1,-1}$, $a_{1,-2}$, $a_{0,-2}$, $b_{0,1}$, $b_{1,1}$, $b_{0,-1}$, $b_{1,-1}$. Then the third face through $b_{0,0}$ (resp. $b_{1,0}$) must be of the form $B_{0,0} :=$ $C_4(b_{-1,0}$, $a_{0,-1}$, $b_{0,0}$, $a_{0,0})$ (resp. $B_{1,0} :=$ $C_4(b_{1,0}$, $a_{1,-1}$, $b_{2,0}$, $a_{1,0})$). Clearly, the second face through the edge $a_{1,0}a_{1,1}$ must be of the form $A_{1,0} :=$ $C_{8}(b_{2,0}$, $b_{3,0}$, $a_{2,0}$, $a_{2,1}$, $b_{3,1}$, $b_{2,1}$, $a_{1,1}$, $a_{1,0,})$. Now, the third face through $a_{1,1}$ must be of the form $B_{1,1} :=$ $C_4(b_{1,1}$, $a_{1,1}$, $b_{2,1}$, $a_{1,2})$. Continuing this way (by the similar arguments as in the proofs of Lemmas \ref{lemma:E8}, \ref{lemma:E9} and \ref{lemma:E10}) we get the paths $P_0 := \cdots\mbox{-} b_{-2,0}\mbox{-} b_{-1,0}\mbox{-} a_{0,-1}\mbox{-}b_{0,0}\mbox{-}b_{1,0}\mbox{-}a_{1,-1}\mbox{-} b_{2,0} \mbox{-} \cdots$ and $P_1 := \cdots\mbox{-} b_{-2,1}\mbox{-} b_{-1,1}\mbox{-} a_{0,1}\mbox{-}b_{0,1}\mbox{-}b_{1,1}\mbox{-}a_{1,1}\mbox{-} b_{2,1} \mbox{-} \cdots$ and faces $A_{i,0}$, $B_{i,0}$, $i\in \mathbb{Z}$, where
$A_{i,j} :=$ $C_{8}(b_{2i,j}, b_{2i+1,j}$, $a_{i+1,2j}, a_{i+1,2j+1}$, $b_{2i+1,j+1}, b_{2i, j+1}$, $a_{i,2j+1}, a_{i,2j})$,
$B_{i,j} :=$ $C_{4}(b_{2i-1,j}, a_{i,2j-1}$, $b_{2i,j}, a_{i,2j})$. Again, by Lemmas \ref{lemma:disc482-1}, \ref{lemma:disc482-2}, $P_0$ and $P_1$ are infinite paths (as before). Similarly, if we start with the edge $b_{0,1}b_{1,1}$ in place of $b_{0,0}b_{1,0}$, we get (similar arguments as before) infinite paths $P_1$ and $P_2$ and faces $A_{i,1}$, $B_{i,1}$, $i\in \mathbb{Z}$. Continuing this way we get faces $A_{i,j}$, $B_{i,j}$, $i, j\in \mathbb{Z}$. Then the mapping $\varphi : X \to E_{11}$, given by $\varphi(a_{i,j}) = u_{i,j}$, $\varphi(b_{i,j}) = v_{i,j}$, $i, j\in \mathbb{Z}$, is an isomorphism. This completes the proof.
\end{proof}

\begin{proof}[Proof of Theorem \ref{theo:plane}]
Let $X$ be a semi-equivelar map on $\mathbb{R}^2$. If the type of $X$ is $[3^6]$ then, by \cite[Lemma 3.2]{DU2005}, $X \cong E_1$.
If the type of $X$ is $[3^3,4^2]$ then, by \cite[Lemma 3.4]{DM2017}, $X\cong E_4$.
If the type of $X$ is $[3^2,4^1,3^1,4^1]$ (resp., $[3^1,6^1,3^1,6^1]$) then, by \cite[Lemma 5.3]{DM2017}, $X$ is isomorphic to $E_5$ (resp., $E_6$). If the type of $X$ is $[4^4]$ then by the same arguments (as in the proofs of previous three cases) it is easy to see that $X\cong E_2$.
If the type of $X$ is $[6^3]$ then the dual $X^{\ast}$ of $X$ is a semi-equivelar map  of type $[3^6]$ on $\mathbb{R}^2$. Thus, $X^{\ast} \cong E_1$ and hence $X \cong E_1^{\ast}\cong E_3$. The result now follows by Lemmas \ref{lemma:E7}, \ref{lemma:E8}, \ref{lemma:E9}, \ref{lemma:E10} and \ref{lemma:E11}.
\end{proof}

\begin{proof}[Proof of Theorem \ref{theo:sem=archimedian}]
Let $\mathcal{T} := \{[3^6]$, $[6^3]$, $[4^4]$, $[3^4,6^1]$, $[3^3,4^2]$, $[3^2,4^1,3^1,4^1]$,  $[3^1,6^1,3^1,6^1]$, $[3^1,4^1,6^1,4^1]$,  $[3^1,12^2]$, $[4^1,8^2]$, $[4^1,6^1,12^1]\}$ and $\mathcal{K} := \mathcal{T} \setminus\{[3^4,6^1]\}$.

Let $Y$ be a semi-equivelar map on the torus.
Let the type of $Y$ be $[p_1^{n_1}, \dots, p_k^{n_k}]$.
Then, by \cite[Theorem 1.4]{DM2017}, $[p_1^{n_1}, \dots, p_k^{n_k}] \in \mathcal{T}$.
Since $\mathbb{R}^2$ is the
universal cover of the torus, by pulling back $Y$ (using similar arguments as in the proof of Theorem 3 in \cite[Page 144]{Sp1966}), we get a semi-equivelar map $\widetilde{Y}$ of type $[p_1^{n_1}, \dots, p_k^{n_k}]$ on
$\mathbb{R}^2$ and a polyhedral covering map $\eta \colon \widetilde{Y} \to Y$.
Since $[p_1^{n_1}, \dots, p_k^{n_k}]\in \mathcal{T}$, because of Theorem \ref{theo:plane}, we can assume that $\widetilde{Y}$ is $E_i$ for some $i = 1, 2, \dots, 10$ or 11.
Let $\Gamma_i$ be the group of covering transformations. Then $|Y|= |E_i|/\Gamma_i$.
For $\sigma\in \Gamma_i$, $\eta \circ \sigma = \eta$.
So, $\sigma$ maps a face of the map $E_i$ in $\mathbb{R}^2$ to a face of $E_i$ (in $\mathbb{R}^2$). This implies that $\sigma$
induces an automorphism $\widehat{\sigma}$ (say) of $E_i$. Thus, we can identify
$\Gamma_i$ with a subgroup of ${\rm Aut}(E_i)$. So, $Y$ is a quotient
of $E_i$ by the subgroup $\Gamma_i$ of ${\rm Aut}(E_i)$, where $\Gamma_i$
has no fixed element (vertex, edge or face).

If $Z$ is a semi-equivelar map on the Klein bottle then, by \cite[Theorem 1.4]{DM2017}, the type of $Z$ is in $\mathcal{K}$. Now, by the same argument as above, $Z \cong E_j/\Lambda_j$ for some (fixed element free) subgroup $\Lambda_j$ of ${\rm Aut}(E_j)$ and $j= 1, \dots, 6, 8, 9, 10$ or 11. This completes the proof.
\end{proof}

%\newpage

\section{Proofs of Theorems \ref{theo:no-of-orbits} and \ref{theo:orbits-3&6}} \label{sec:proofs-2}

\begin{proof}[Proof of Theorem \ref{theo:no-of-orbits}]
For $1\leq i\leq 11$, let $E_{i}$ be as in Example \ref{exam:plane}.
Let $V_{i} = V(E_i)$ be the vertex set of $E_{i}$. Let $H_{i}$ be the group of all the translations of $E_{i}$. So, $H_i \leq {\rm Aut}(E_{i})$.

Let $X$ be a semi-equivelar map of type $[4^1,8^2]$ on the torus.
Then, by Theorems \ref{theo:plane} and \ref{theo:sem=archimedian}, we can assume $X = E_{11}/\Gamma_{11}$ for some subgroup $\Gamma_{11}$ of ${\rm Aut}(E_{11})$ and $\Gamma_{11}$
has no fixed element (vertex, edge or face). Hence $\Gamma_{11}$
consists of translations and glide reflections. Since $X =
E_{11}/\Gamma_{11}$ is orientable, $\Gamma_{11}$ does not contain any glide
reflection. Thus $\Gamma_{11} \leq H_{11}$. Let $\eta_{11} : E_{11}\to X$ be the polyhedral covering map.

We take origin $(0,0)$ is the middle point of the line segment joining vertices $v_{0,0}$ and $v_{1,1}$ of $E_{11}$ (see Fig. 1 (k)). Let $a := u_{1,0} - u_{0,0}$, $b := v_{0,1}- v_{0,0} \in \mathbb{R}^2$. Then $H_{11} := \langle x\mapsto x+a, x\mapsto x+b \rangle$. Under the action of $H_{11}$, vertices of $E_{11}$ form four orbits.
Consider the subgroup $G_{11}$ of ${\rm Aut}(E_{11})$ generated by $H_{11}$  and the map (the half rotation) $x\mapsto -x$. So,
\begin{align*}
  G_{11} & =\{ \alpha : x\mapsto \varepsilon x + ma + nb \, : \, \varepsilon=\pm 1, m, n \in \ZZ\} \cong H_{11}\rtimes \mathbb{Z}_2.
\end{align*}
Clearly, under the action of $G_{11}$, vertices of $E_{11}$ form two orbits. The orbits are $O_1 :=\{u_{i,j} \, : \, i, j\in \ZZ\}$, $O_2 := \{v_{i,j} \, : \, i, j\in \ZZ\}$.

\medskip

\noindent {\bf Claim 1.} If $K \leq H_{11}$ then $K \unlhd G_{11}$.

\smallskip

Let $g \in G_{11}$ and $k\in K$. Then $g(x) = \varepsilon x+ma+nb$ and $k(x) = x + pa+ qb$ for some $m, n, p, q\in \mathbb{Z}$ and $\varepsilon\in\{1, -1\}$.
Therefore, $(g\circ k\circ g^{-1})(x) = (g\circ k)(\varepsilon(x-ma-nb)) = g(\varepsilon(x-ma-nb)+pa+qb)=x-ma-nb+\varepsilon(pa+qb)+ma+nb= x+\varepsilon(pa+qb) = k^{\varepsilon}(x)$. Thus, $g\circ k\circ g^{-1} = k^{\varepsilon}\in K$. This proves the claim.

\smallskip

By Claim 1, $\Gamma_{11}$ is a normal subgroup of $G_{11}$. Therefore, $G_{11}/\Gamma_{11}$ acts on $X= E_{11}/\Gamma_{11}$.
Since $O_1$ and $O_2$ are the $G_{11}$-orbits, it follows that $\eta_{11}(O_1)$ and $\eta_{11}(O_2)$ are the $(G_{11}/\Gamma_{11})$-orbits. Since the vertex set of $X$ is $\eta_{11}(V_{11}) = \eta_{11}(O_1) \sqcup \eta_{11}(O_2)$ and $G_{11}/\Gamma_{11} \leq {\rm Aut}(X)$, it follows that the number of ${\rm Aut}(X)$-orbits of vertices is $\leq 2$.
This proves part (a).

%%%%%%%%%%%%%%%%%%%%

%%%%%%%%%%%%%%%%%%%%%%%%

\medskip

Now, let $X$ be a semi-equivelar map of type $[3^4,6^1]$ on the torus.
Then, by Theorems \ref{theo:plane} and \ref{theo:sem=archimedian}, we can assume $X = E_{7}/\Gamma_{7}$ for some $\Gamma_{7} \leq {\rm Aut}(E_{7})$. As before, $\Gamma_{7} \leq H_{7}$. Let $\eta_{7} : E_{7}\to X$ be the polyhedral covering map.

We take origin $(0,0)$ is the middle point of the line segment joining vertices $u_{-1,0}$ and $u_{1,0}$ of $E_{7}$ (see Fig. 1 (g)). Let $a_7 := u_{4,-1} - u_{1,0}$, $b_7 := u_{2,2} - u_{1,0}$, $c_7 := u_{-1,3} - u_{1,0} \in \mathbb{R}^2$. Then $H_{7} = \langle x\mapsto x+a_7, x\mapsto x+b_7, x\mapsto x+c_7 \rangle$. Under the action of $H_{7}$, vertices of $E_{7}$ form six orbits. (Observe that the six vertices of a 6-gon are in different $H_7$-orbits.)
Consider the subgroup $G_{7}$ of ${\rm Aut}(E_{7})$ generated by $H_{7}$  and the mapping   $x\mapsto -x$. So,
\begin{align*}
  G_{7} & =\{ \alpha : x\mapsto \varepsilon x + ma_7 + nb_7 + pc_7 \, : \, \varepsilon=\pm 1, m, n, p \in \ZZ\} \cong H_{7}\rtimes \mathbb{Z}_2.
\end{align*}
Then, the vertices of $E_{7}$ form three $G_{7}$-orbits. The orbits are $O_1 :=\{u_{7i+4j\pm 1,j} \, : \, i, j\in \ZZ\}$, $O_2 := \{u_{7i+4j,j\pm 1} \, : \, i, j\in \ZZ\}$ and $O_3 :=\{u_{7i+4j+1,j-1}, u_{7i+4j-1, j+1} \, : \, i, j\in \ZZ\}$. (Observe that each vertex of $E_7$ is in a unique 6-gon and the vertices of the 6-gon whose center is $u_{7i+4j,j}$ are $u_{7i+4j\pm 1,j}$, $u_{7i+4j,j\pm 1}$, $u_{7i+4j+1,j-1}$, $u_{7i+4j-1, j+1}$ for all $i, j$, see Fig. 1 (a), (g).)

\medskip

\noindent {\bf Claim 2.} If $K \leq H_{7}$ then $K \unlhd G_{7}$.

\smallskip

Let $g \in G_{7}$ and $k\in K$. Then, $g(x) = \varepsilon x+ma_7+nb_7+pc_7$ and $k(x) = x + ra_7+ sb_7+tc_7$ for some $m, n, p, r, s, t\in \mathbb{Z}$ and $\varepsilon\in\{1, -1\}$.
Therefore, $(g\circ k\circ g^{-1})(x) = (g\circ k)(\varepsilon(x-ma_7-nb_7-pc_7)) = g(\varepsilon(x-ma_7-nb_7-pc_7)+ra_7+sb_7+tc_7)=x-ma_7-nb_7-pc_7+\varepsilon(ra_7+sb_7+tc_7) + ma_7+nb_7+pc_7= x+\varepsilon(ra_7+ sb_7+tc_7) = k^{\varepsilon}(x)$. Thus, $g\circ k\circ g^{-1} = k^{\varepsilon}\in K$. This proves the claim.

\medskip

By Claim 2, $\Gamma_{7}$ is a normal subgroup of $G_{7}$. Therefore, $G_{7}/\Gamma_{7}$ acts on $X= E_{7}/\Gamma_{7}$.
Since $O_1$, $O_2$ and $O_3$ are the $G_{7}$-orbits, it follows that $\eta_{7}(O_1)$, $\eta_{7}(O_2)$ and $\eta_{7}(O_3)$ are the $(G_{7}/\Gamma_{7})$-orbits. Since the vertex set of $X$ is $\eta_{7}(V_{7}) = \eta_{7}(O_1) \sqcup \eta_{7}(O_2)\sqcup \eta_{7}(O_3)$ and $G_{7}/\Gamma_{7} \leq {\rm Aut}(X)$, it follows that the number of ${\rm Aut}(X)$-orbits of vertices is $\leq 3$.

If $X$ is a semi-equivelar map of type $[3^1, 4^1, 6^1, 4^1]$ on the torus then, by Theorems \ref{theo:plane} and \ref{theo:sem=archimedian}, $X = E_{8}/\Gamma_{8}$ for some group $\Gamma_{8} \leq H_{8} \leq {\rm Aut}(E_{8})$. By the same arguments as above, the number of $G_{8}/\Gamma_{8}$-orbits of vertices of $X$ is three, where $G_8$ is generated by $H_{8} = \langle x\mapsto x+(w_{1,0}-w_{0,0}), x\mapsto x+(w_{0,2}-w_{0,0}), x\mapsto x+(w_{-1,2}-w_{0,0})\rangle$  and the mapping  $x\mapsto -x$. (Here the three $G_{8}$-orbits of vertices of $E_8$ are $\{u_{i,j} \, : \, i, j\in \mathbb{Z}\}$, $\{v_{i,j} \, : \, i, j\in \mathbb{Z}\}$, $\{w_{i,j} \, : \, i, j\in \mathbb{Z}\}$.
In this case, we take the origin at the middle point of the line segment joining $w_{0,0}$ and $w_{0,1}$.) Thus, the number of ${\rm Aut}(X)$-orbits of vertices is $\leq 3$.

If $X$ is a semi-equivelar map of type $[3^1, 12^2]$ on the torus then, by Theorems \ref{theo:plane} and \ref{theo:sem=archimedian}, $X = E_{9}/\Gamma_{9}$ for some group $\Gamma_{9} \leq H_{9} \leq {\rm Aut}(E_{9})$. Again, by the same arguments, the number of $G_{9}/\Gamma_{9}$-orbits of vertices of $X$ is three, where $G_9$ is generated by $H_{9} = \langle x\mapsto x+(v_{2,0}-v_{0,0}), x\mapsto x+(v_{1,1}-v_{0,0}), x\mapsto x+(v_{-2,1}- v_{0,0})\rangle$  and the mapping  $x\mapsto -x$. (Here also the three $G_{9}$-orbits of vertices of $E_9$ are $\{u_{i,j} \, : \, i, j\in \mathbb{Z}\}$, $\{v_{i,j} \, : \, i, j\in \mathbb{Z}\}$, $\{w_{i,j} \, : \, i, j\in \mathbb{Z}\}$.
In this case, we take the origin at the middle point of the line segment joining $v_{0,0}$ and $v_{1,1}$.) Thus, the number of ${\rm Aut}(X)$-orbits of vertices is $\leq 3$. This completes the proof of part (b).

Finally, assume that $X$ is a semi-equivelar map of type $[4^1, 6^1, 12^1]$ on the torus then, by Theorems \ref{theo:plane} and \ref{theo:sem=archimedian}, $X = E_{10}/\Gamma_{10}$ for some group $\Gamma_{10} \leq H_{10} \leq {\rm Aut}(E_{10})$.
Let $G_{10}$ be the group generated by $H_{10} = \langle x\mapsto x+(y_{1,0}-y_{0,0}), x\mapsto x+(y_{0,2}-y_{0,0}), x\mapsto x+(y_{-1,2}-y_{0,0})\rangle$  and the mapping  $x\mapsto -x$.
By the same arguments as above, $\Gamma_{10} \unlhd G_{10}$ and the number of $G_{10}/\Gamma_{10}$-orbits of vertices of $X$ is six. (The six $G_{10}$-orbits of vertices of $E_{10}$ are $\{u_{i,j} \, : \, i, j\in \mathbb{Z}\}$, $\{v_{i,j} \, : \, i, j\in \mathbb{Z}\}$, $\{w_{i,j} \, : \, i, j\in \mathbb{Z}\}$, $\{x_{i,j} \, : \, i, j\in \mathbb{Z}\}$, $\{y_{i,j} \, : \, i, j\in \mathbb{Z}\}$ and $\{z_{i,j} \, : \, i, j\in \mathbb{Z}\}$.
Here, we take the origin at the middle point of the line segment joining $y_{0,0}$ and $y_{0,1}$.) Thus, the number of ${\rm Aut}(X)$-orbits of vertices is $\leq 6$. This proves part (c).
\end{proof}

\begin{proof}[Proof of Theorem \ref{theo:orbits-3&6}]
Consider the semi-equivelar map $M_1$ given in Example \ref{exam:torus}. By Theorems \ref{theo:plane} and \ref{theo:sem=archimedian}, $M_1 = E_6/\Theta_6$ for some subgroup $\Theta_6$ of ${\rm Aut}(E_6)$. Let $H_6$ and $G_6$ be as in the proof of Theorem \ref{theo:no-of-orbits}.
Then we know that $\Theta_6 \leq H_{6}$, $\Theta_6 \unlhd G_{6}$ and the number of $(G_6/\Theta_6)$-orbits of vertices of $M_1$ is three. It also follows from the same proof that the three orbits are $O_u := \{u_1, \dots, u_{12}\}$, $O_v := \{v_1, \dots, v_{12}\}$, $O_w := \{w_1, \dots, w_{12}\}$. Let $\mathcal{G}_1$ be the graph whose vertices are the vertices of $M_1$ and edges are the longest diagonals of 6-gons. Then $\mathcal{G}_1$ is a 2-regular graph and hence union of cycles. Observe that $C_{4}(u_{1}, u_{2}, u_{3}$, $u_{4})$, $C_{3}(v_{1}, v_{8}, v_{11})$,
$C_{12}(w_{1}, w_{2}, w_{10}, w_{7}$, $w_{8}, w_{9}, w_{5}$, $w_{6}, w_{12}, w_{3}, w_{4}, w_{11})$ are three cycles (components) in $\mathcal{G}_1$.
Clearly, ${\rm Aut}(M_1)$ acts on $\mathcal{G}_1$. So, ${\rm Aut}(M_1) \leq {\rm Aut}(\mathcal{G}_1)$. Since $u_1, v_1, w_1$ are in components of different sizes in $\mathcal{G}_1$, it follows that they are in different ${\rm Aut}(\mathcal{G}_1)$-orbits. Hence $u_1,v_1, w_1$ are in different ${\rm Aut}(M_1)$-orbits. This implies that the ${\rm Aut}(M_1)$-orbits are $O_u, O_v$ and $O_w$. In other words, $M_1$ has exactly three ${\rm Aut}(M_1)$-orbits of vertices.

Consider the semi-equivelar map $M_3$ given in Example \ref{exam:torus}. By Theorems \ref{theo:plane} and \ref{theo:sem=archimedian}, $M_3 = E_8/\Theta_8$ for some subgroup $\Theta_8$ of ${\rm Aut}(E_6)$. Let $\mathcal{G}_3$ be the graph whose vertices are the vertices of $M_3$ and edges are diagonals of 4-gons. Then $\mathcal{G}_3$ is a 2-regular graph and hence union of cycles. Here $C_{6}(a_{1}, a_{4}, a_{2}$, $a_{5}, a_{3}, a_{6})$, $C_{4}(b_{1}, b_{4}, b_{7}, b_{10})$, $C_{12}(c_{1}, c_{5}, c_{8}, c_{11}$, $c_{3}, c_{4}, c_{7}$, $c_{10}, c_{2}, c_{6}, c_{9}, c_{12})$ are three cycles (components) in $\mathcal{G}_3$. Therefore, by the same arguments as above, $M_3$ has exactly three ${\rm Aut}(M_3)$-orbits of vertices.

Consider the semi-equivelar map $M_4$ given in Example \ref{exam:torus}. By Theorems \ref{theo:plane} and \ref{theo:sem=archimedian}, $M_4 = E_9/\Theta_9$ for some subgroup $\Theta_9$ of ${\rm Aut}(E_9)$. Let $\mathcal{G}_4$ be the graph whose vertices are the vertices of $M_4$ and edges are the longest diagonals of 12-gons. Then $\mathcal{G}_4$ is a 2-regular graph and hence union of cycles. Here $C_{12}(u_{1}, u_{12}, u_{21}$, $u_{8}, u_{9}, u_{20}$, $u_{5}, u_{16}, u_{17}$, $u_{4}, u_{13}, u_{24})$, $C_{6}(v_{1}, v_{10}, v_{17}$, $v_{2}, v_{9}, v_{18})$, $C_{4}(w_{1}, w_{6}, w_{3}, w_{8})$ are cycles in $\mathcal{G}_4$. Therefore, by the same arguments as above, $M_4$ has exactly three ${\rm Aut}(M_4)$-orbits of vertices.

%%%%%%%%%%%%%%%%%

Now, consider the semi-equivelar map $M_2$ given in Example \ref{exam:torus}. By Theorems \ref{theo:plane} and \ref{theo:sem=archimedian}, $M_2 = E_7/\Theta_7$ for some subgroup $\Theta_7$ of ${\rm Aut}(E_7)$. Let $H_7$ and $G_7$ be as in the proof of Theorem \ref{theo:no-of-orbits}.
Then we know that $\Theta_7 \leq H_{7}$, $\Theta_7 \unlhd G_{7}$ and the number of $(G_7/\Theta_7)$-orbits of vertices of $M_2$ is three. It also follows from the same proof that the three orbits are $O_a := \{a_1, \dots, a_{8}\}$, $O_b := \{b_1, \dots, b_{8}\}$, $O_c := \{c_1, \dots, c_{8}\}$.

We call an edge $uv$ of $M_2$ {\em nice} if at $u$ (respectively, at $v$) three 3-gons containing $u$ (respectively, $v$) lie on one side of $uv$ and one on the other side of $uv$. (For example, $c_{6}c_{7}$ is nice). Observe that there is exactly one nice edge in $M_2$ through each vertex.
Let $\mathcal{G}_2$ be the graph whose vertices are the vertices of $M_2$ and edges are the nice edges and the long diagonals of $6$-gons (this graph was first constructed in the proof of \cite[Lemma 4.3]{DM2017}). Then $\mathcal{G}_2$ is a $2$-regular graph and hence is a disjoint union of cycles. Again, we can assume that ${\rm Aut}(M_2) \leq {\rm Aut}(\mathcal{G}_2)$.
Observe that $C_{4}(b_{1}, b_{8}, b_{2}, b_{7})$, $C_{8}(a_{1}, a_{3}, a_{5}, a_{7}, a_{2}, a_{4}, a_{6}, a_{8})$,
$C_{8}(c_{1}, c_{3}, c_{5}, c_{8}, c_{2}, c_{4}, c_{6}, c_{7})$ are three cycles (components) in $\mathcal{G}_2$.
Since $a_1$ (resp. $c_1$) and $b_1$ are in components of different sizes in $\mathcal{G}_2$, it follows that they are in different ${\rm Aut}(\mathcal{G}_2)$-orbits and hence in different ${\rm Aut}(M_2)$-orbits.
If possible, suppose there exists $\alpha\in {\rm Aut}(M_2)$ such that $\alpha(a_1)=c_1$.
Then $\alpha$ maps the neighbours of $a_1$ to neighbours of $c_1$ in $M_2$. So, $\alpha(\{a_3, b_1, b_3, c_2, c_4\}) = \{a_2, a_3, b_1, b_7, c_3\}$.
Since $\alpha\in {\rm Aut}(\mathcal{G}_2)$, any $b_i$ maps to some $b_j$ by $\alpha$. So,
$\alpha(\{b_1, b_3\}) = \{b_1, b_7\}$. This is not possible since $b_1b_3$ is not an edge of $\mathcal{G}_2$ but $b_1b_7$ is an edge of $\mathcal{G}_2$. Thus, there does not exist any  $\alpha\in {\rm Aut}(M_2)$ such that $\alpha(a_1)=c_1$. Therefore, $a_1$ and $c_1$ are in different ${\rm Aut}(M_2)$-orbits. This implies that the number of ${\rm Aut}(M_2)$-orbits of vertices of $M_2$ is three. These four examples proof part (a).

Finally, consider the semi-equivelar map $M_5$ given in Example \ref{exam:torus}. By Theorems \ref{theo:plane} and \ref{theo:sem=archimedian}, $M_5 = E_{10}/\Theta_{10}$ for some group $\Theta_{10} \leq H_{10} \leq {\rm Aut}(E_{10})$.
Again, let $G_{10}$ be as in the proof of Theorem \ref{theo:no-of-orbits}.
Then $\Theta_{10} \unlhd G_{10}$ and the number of $(G_{10}/\Theta_{10})$-orbits of vertices of $M_5$ is six. It also follows from the same proof that the six orbits are $O_a := \{a_1, \dots, a_{12}\}$, $O_b := \{b_1, \dots, b_{12}\}$, $O_c := \{c_1, \dots, c_{12}\}$, $O_u := \{u_1, \dots, u_{12}\}$, $O_v := \{v_1, \dots, v_{12}\}$ and $O_w := \{w_1, \dots, w_{12}\}$.
Let $\mathcal{G}_5$ be the graph whose vertices are the vertices of $M_5$ and edges are the diagonals of 4-gons and the longest diagonals of $12$-gons. Then
\begin{align*}
  \mathcal{G}_5 = & ~ C_{12}(v_1, \dots, v_{12}) \sqcup C_{12}(w_1, \dots, w_{12}) \sqcup C_{6}(u_1, \dots, u_{6}) \sqcup C_{6}(u_7, \dots, u_{12})\\
   &  ~~ \sqcup C_{6}(c_1, \dots, c_{6}) \sqcup C_{6}(c_7, \dots, c_{12}) \sqcup C_{4}(a_1, \dots, a_{4}) \sqcup C_{4}(a_5, \dots, a_{8})\\
   &  ~~ \sqcup C_{4}(a_9, \dots, a_{12}) \sqcup C_{4}(b_1, \dots, b_{4}) \sqcup C_{4}(b_5, \dots, b_{8}) \sqcup C_{4}(b_9, \dots, b_{12}).
\end{align*}
Since ${\rm Aut}(M_5)$ acts on $\mathcal{G}_5$, we can assume that ${\rm Aut}(M_5) \leq {\rm Aut}(\mathcal{G}_5)$.
Suppose there exists $\alpha\in {\rm Aut}(M_5)$ such that $\alpha(a_1)=b_1$.
Then $\alpha$ maps the neighbours of $a_1$ to neighbours of $b_1$ in $M_5$. So, $\alpha(\{b_1, b_4, w_1\}) = \{a_1, a_4, c_2\}$.
Since $\alpha\in {\rm Aut}(\mathcal{G}_5)$, it follows that
$\alpha(\{b_1, b_4\}) = \{a_1, a_4\}$. This implies that $\alpha(w_1)=c_2$.
This is not possible since $w_1$ and $c_2$ are in components of $\mathcal{G}_5$ of different sizes and $\alpha\in {\rm Aut}(\mathcal{G}_5)$. Thus, there does not exist any $\alpha\in {\rm Aut}(M_5)$ such that $\alpha(a_1)=b_1$ and hence $a_1$ and $b_1$ are in different ${\rm Aut}(M_5)$-orbits. Similarly, one can show that $v_1$ and $w_1$ are in different ${\rm Aut}(M_5)$-orbits and $u_1$ and $c_1$ are in different ${\rm Aut}(M_5)$-orbits.
These imply that the number of ${\rm Aut}(M_5)$-orbits of vertices of $M_5$ is exactly six.
This proves part (b).
\end{proof}

%\bigskip

%\newpage

\noindent {\bf Acknowledgements:}
The second author is supported by UGC, India by UGC-Dr. D. S. Kothari Post Doctoral Fellowship (F.4-2/2006 (BSR)/MA/16-17/0012).

{\small
%% BIBLIOGRAPHY %%

}


\begin{thebibliography}{99}
%\addcontentsline{toc}{chapter}{ \mbox{} \hspace{2mm} Bibliography}
\bibitem{Ba1991}
L. Babai, {\em Vertex-transitive graphs and vertex-transitive maps}, J. Graph Theory, {\bf 15}  (1991), pp. 587-627.

\bibitem{CM1957} H. S. M. Coxeter and W. O. J. Moser, {\em Generators and Relations for Discrete Groups}, 4th edn., Springer, Berlin, 1980.

\bibitem{DM2017}
B. Datta and D. Maity, Semi-equivelar and vertex-transitive maps on the torus,
    {\em Beitr\"{a}ge Algebra Geom.} (2017). doi:10.1007/s13366-017-0332-z.

\bibitem{DN2001}
B. Datta and N. Nilakantan, {\em Equivelar polyhedra with few vertices}, {Discret. Comput Geom.}, {\bf 26} (2001), pp. 429-461.

\bibitem{DU2005}
B. Datta and  A. K. Upadhyay, {\em Degree-regular triangulations of torus and Klein bottle}, Proc. Indian Acad. Sci. (Math. Sci.), {\bf 115} (2005), pp. 279-307.

\bibitem{FT1965} L. Fejes T\'{o}th, {\em Regul\"{a}re Figuren}, Akad\'{e}miai Kiad\'{o}, Budapest, 1965. English translation: {\em Regular Figures}, Pergmon Press, Oxford,  1964.
%
\bibitem{GS1977}
B. Gr\"{u}nbaum and  G. C. Shephard, {\em Tilings by regular polygons: Patterns in the plane from Kepler to the present, including recent results and unsolved problems}, Math. Mag., {\bf 50} (1977), pp. 227-247.

%
\bibitem{Sp1966}
E. H. Spanier, {\em Algebraic Topology}, Springer, New York, 1966.
%
\bibitem{Su2011t}
O. S\v{u}ch, {\em Vertex transitive maps on a torus}, Acta Math. Univ. Comenianae, {\bf 53} (2011), pp. 1-30.
%
\bibitem{Su2011kb}
O. S\v{u}ch, {\em Vertex transitive maps on the Klein bottle}, {ARS Math. Contemporanea}, {\bf 4} (2011), pp. 363-374.

\end{thebibliography}
\end{document}